\title{A presentation for the pure Hilden group}
\author{Stephen Tawn}
\def\today{\number\day \space\ifcase\month\or
  January\or February\or March\or April\or May\or June\or
  July\or August\or September\or October\or November\or December\fi
  \space\number\year}
\newcommand{\Z}{\mathbb{Z}}
\newcommand{\R}{\mathbb{R}}
\newcommand{\st}{\boldsymbol{\mid}}
\newcommand{\boundary}{\partial}
\newcommand{\for}{\text{for }}
\newcommand{\union}{\cup}
\newcommand{\Union}{\bigcup}
\newcommand{\intersect}{\cap}
\newcommand{\includes}{\hookrightarrow}
\DeclareMathOperator{\stab}{Stab}
\DeclareMathOperator{\aut}{Aut}
\newcommand{\Braid}[1]{\mathbf{B}_{#1}}
\newcommand{\PureBraid}[1]{\mathbf{P}_{#1}}
\newcommand{\FramedBraid}[1]{\mathbf{F\!B}_{#1}}
\newcommand{\Hilden}[1]{\mathbf{H}_{#1}}
\newcommand{\PureHilden}[1]{\mathbf{P\!H}_{#1}}
\newcommand{\FramedPureBraid}[1]{\mathbf{F\!P}_{#1}}
\newcommand{\MCG}{\mathbf{MCG}}
\newcommand{\X}[1]{\mathbf{X}_{#1}}
\newcommand{\halfspace}{\R^3_+}
\newcommand{\deductionTableWidth}{ xxxxxxxxxxxxxx & xxxxxxxxxxxxxxxxxxxxxx & xxxxxxxxxx \kill}
\newtheorem{theorem}{Theorem}
\newtheorem{lemma}[theorem]{Lemma}
\newtheorem{proposition}[theorem]{Proposition}
\newtheorem{claim}{Claim}
\theoremstyle{definition}
\newtheorem{definition}[theorem]{Definition}
\newcolumntype{L}{>{\(}l<{\)}}
\newcolumntype{C}{>{\(}c<{\)}}
\newcolumntype{R}{>{\(}r<{\)}}
\newcommand{\caps}{
  \begin{overpic}[width=9cm,height=2.4cm]{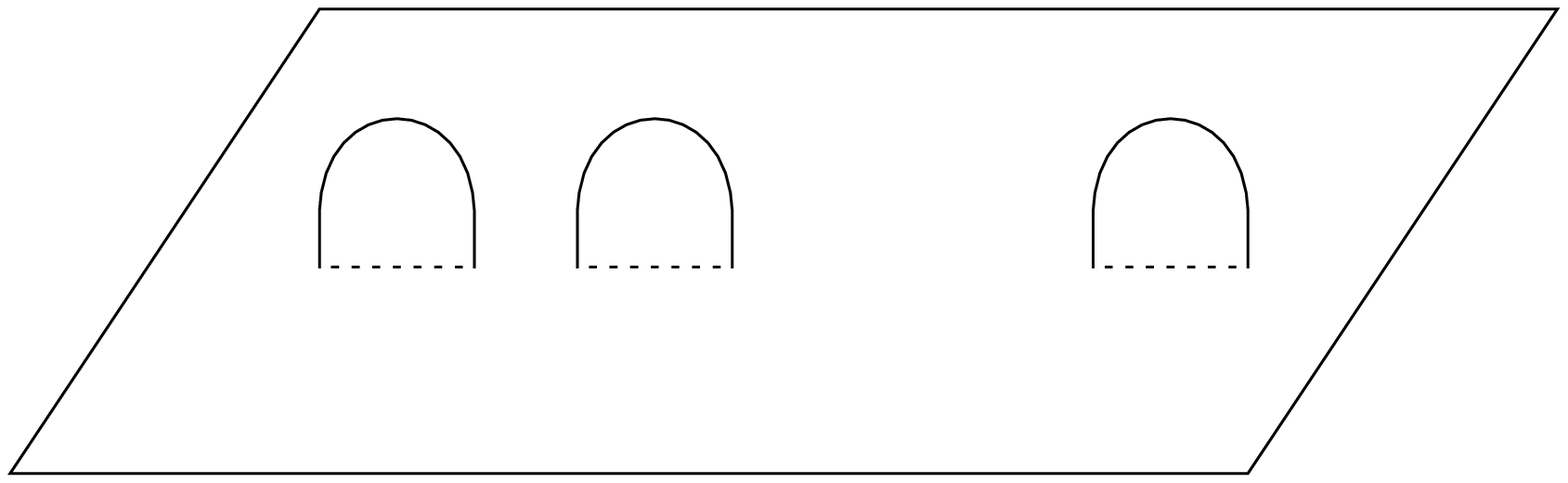}
    \put(56.5,16){$\ldots$}
    \put(29,20){$a_1$}
    \put(45.5,20){$a_2$}
    \put(79,20){$a_n$}
    \put(23.5,15){$d_1$}
    \put(40.4,15){$d_2$}
    \put(73.5,15){$d_n$}
  \end{overpic}
}
\newcommand{\generatorp}{
  \parbox[c]{2cm}{
    \begin{overpic}[width=1.7cm]{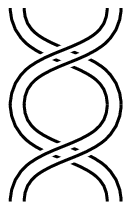}
      \put(4,100){$i$}
      \put(51,100){$j$}
    \end{overpic}
  }
}
\newcommand{\generatorx}{
  \parbox[c]{2cm}{
    \begin{overpic}[width=1.7cm]{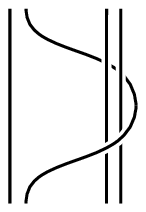}
      \put(4,100){$i$}
      \put(51,100){$j$}
    \end{overpic}
  }
}
\newcommand{\generatory}{
  \parbox[c]{2cm}{
    \begin{overpic}[width=1.7cm]{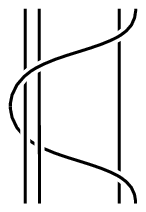}
      \put(12,100){$i$}
      \put(58,100){$j$}
    \end{overpic}
  }
}
\newcommand{\generatort}{
  \parbox[c]{0.45cm}{
    \begin{overpic}[width=0.45cm]{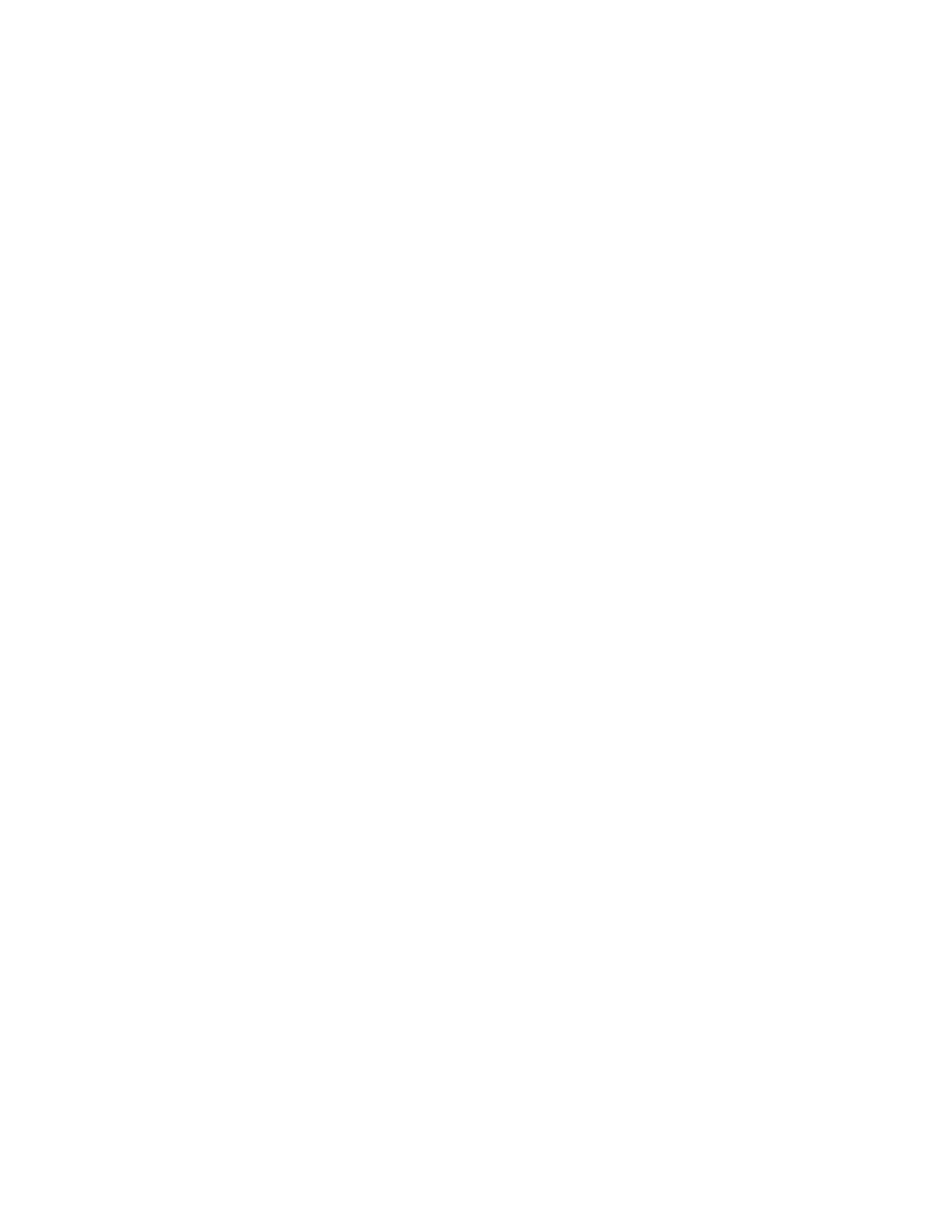}
      \put(5,98){$k$}
    \end{overpic}
  }
}
\newcommand{\edgevovoXij}{
  \fbox{\parbox[c]{3cm}{
    \begin{overpic}[width=3cm]{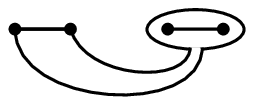}
      \put(25,37){$i$}
      \put(73,37){$j$}
    \end{overpic}}}
}
\newcommand{\edgevovoxij}{
  \fbox{\parbox[c]{3cm}{
    \begin{overpic}[width=3cm]{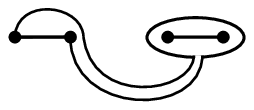}
      \put(25,37){$i$}
      \put(73,37){$j$}
    \end{overpic}}}
}
\newcommand{\edgevovoYij}{
  \fbox{\parbox[c]{3cm}{
    \begin{overpic}[width=3cm]{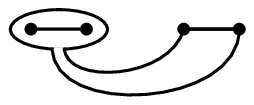}
      \put(24,37){$j$}
      \put(74,37){$i$}
    \end{overpic}}}
}
\newcommand{\edgevovoyij}{
  \fbox{\parbox[c]{3cm}{
    \begin{overpic}[width=3cm]{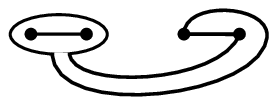}
      \put(24,37){$j$}
      \put(74,37){$i$}
    \end{overpic}}}
}
\newcommand{\edgevovoXikXij}{
  \fbox{\parbox[c]{3.8cm}{
    \begin{overpic}[width=3.8cm]{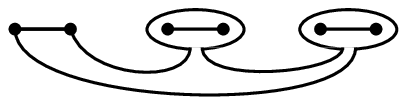}
      \put(17,24){$i$}
      \put(48,24){$j$}
      \put(81,24){$k$}
    \end{overpic}}}
}
\newcommand{\edgevovoxijxik}{
  \fbox{\parbox[c]{3.8cm}{
    \begin{overpic}[width=3.8cm]{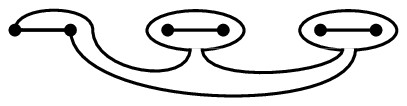}
      \put(17,24){$i$}
      \put(48,24){$j$}
      \put(81,24){$k$}
    \end{overpic}
    }
  }
}
\newcommand{\edgevovoxjkyij}{
  \fbox{\parbox[c]{3.8cm}{
    \begin{overpic}[width=3.8cm]{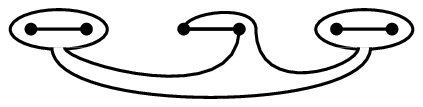}
      \put(16,24){$j$}
      \put(49,24){$i$}
      \put(81,24){$k$}
    \end{overpic}
    }
  }
}
\newcommand{\edgevovoYijXjk}{
  \fbox{\parbox[c]{3.8cm}{
    \begin{overpic}[width=3.8cm]{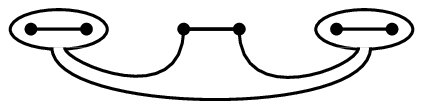}
      \put(16,24){$j$}
      \put(49,24){$i$}
      \put(81,24){$k$}
    \end{overpic}}}
}
\newcommand{\edgevovoYjkYik}{
  \fbox{\parbox[c]{3.8cm}{
    \begin{overpic}[width=3.8cm]{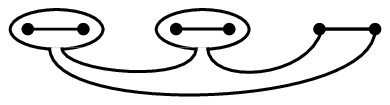}
      \put(17,24){$j$}
      \put(49,24){$k$}
      \put(81,24){$i$}
    \end{overpic}}}
}
\newcommand{\edgevovoyikyjk}{
  \fbox{\parbox[c]{3.8cm}{
    \begin{overpic}[width=3.8cm]{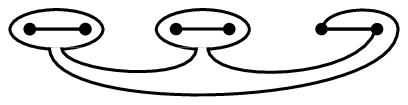}
      \put(17,24){$j$}
      \put(49,24){$k$}
      \put(81,24){$i$}
    \end{overpic}
    }
  }
}
\newcommand{\generatorsigma}{
  \parbox[c]{2.5cm}{
    \begin{overpic}[height=2cm]{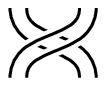}
      \put(10,83){$i$}
      \put(68,83){$i+1$}
    \end{overpic}
  }
}
\newcommand{\generatortau}{
  \parbox[c]{1cm}{
    \begin{overpic}[height=2cm]{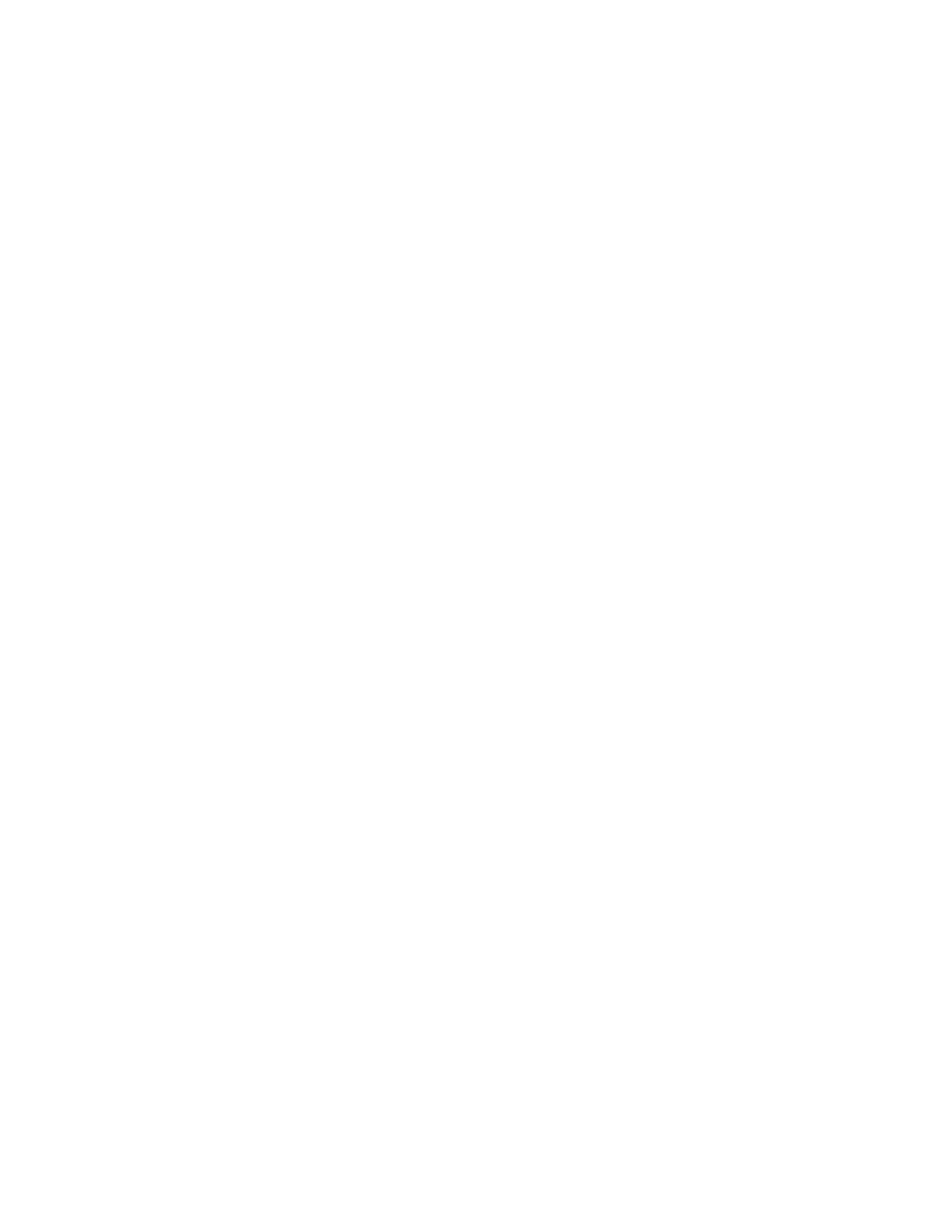}
      \put(15,100){$j$}
    \end{overpic}
  }
}
\begin{document}

\maketitle

\begin{abstract}
Consider the unit ball, $B = D \times [0,1]$, containing $n$ unknotted
arcs $a_1, a_2, \ldots, a_n$ such that the boundary of each $a_i$ lies
in $D \times \{0\}$. The Hilden (or Wicket) group is the mapping class
group of $B$ fixing the arcs $a_1 \union a_2 \union \ldots \union a_n$
setwise and fixing $D \times \{1\}$ pointwise.  This group can be
considered as a subgroup of the braid group.  The pure Hilden group is
defined to be the intersection of the Hilden group and the pure braid
group.

In a previous paper we computed a presentaion for the Hilden group
using an action of the group on a cellular complex.  This paper uses
the same action and complex to calculate a finite presentation for the
pure Hilden group.  The framed braid group acts on the pure Hilden group
by conjugation and this action is used to reduce the number of cases.
\end{abstract}

\section{Introduction}

Given a braid $b \in \Braid{2n}$ on $2n$ strings we can produce
a link by taking its plat closure.  This is formed by adding
semi-sircular caps and cups connecting consecutive pairs of
strings at the top and at the bottom.

\begin{figure}[!htb]
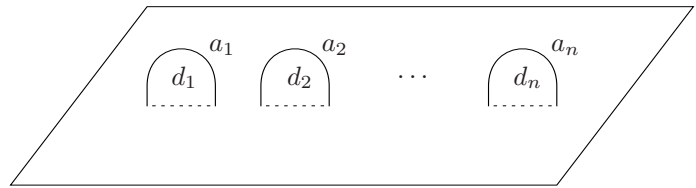

  \[ \caps \]
  \caption{The caps $a_i$ and discs $d_i$} \label{caps}
\end{figure}

Let $a = a_1 \union a_2 \union \cdots \union a_n$ be the
$(0,2n)$--tangle given by the caps.  The Hilden (or wicket)
subgroup of the braid group is the stabiliser of $a$ under
the action of the braid group on the set of $(0,2n)$--tangles.
\[
    \Hilden{2n} = \{ b \in \Braid{2n} \st a\,b = a \}
\]
We define the pure Hilden group to be the intersection of the
Hilden group and the pure braid group.
\[
    \PureHilden{2n} = \PureBraid{2n} \intersect \Hilden{2n}
\]

There are two moves that can be perfomed on a braid $b \in \Braid{2n}$
which leave its plat closure unchanged. A double coset move where
you multiply on the left and right by elements of the Hilden
group and a stabilisation move where you add two extra strings
on the right and then multiply by $\sigma_{2n}$.  Birman\cite{Birman}
has shown that any two braids with isotopic plat closures can
be related by a sequence of these double coset and stabilisation moves.

Generators for the equivalent subgroup of the braid group of the sphere
were found by Hilden\cite{Hilden} and a finite presentation for the
Hilden group was calculated independently by the author\cite{Tawn} and
Brendle--Hatcher\cite{BrendleHatcher}.

If we shift the cups so that the first string is connected to the last,
the second to the third, etc., then we get a modified form of plat
closure (or short-circuit map) which takes pure braids to knots.  Now
the stabiliser of the cups is different to that of the caps and we can
use inclusion for the stabilisation move.  Mostovoy--Stanford\cite{MostovoyStanford}
show that if you take the limit of this system of inclusions then
modified plat closure induces a bijection between 
$\PureHilden{\infty}^{\text{top}} \backslash \PureBraid{\infty}
                                / \PureHilden{\infty}^{\text{bottom}}$
and the set of oriented links.

In this paper we will compute a finite presentation for the
pure Hilden group $\PureHilden{2n}$.

\begin{theorem} \label{main-theorem}
  The pure Hilden group has a finite presentation with generating
  set $S$ and relations $R$
  \[
     \PureHilden{2n} = \langle S \mid R \rangle
  \]
  where $S$ and $R$ are as follows.

  Let
  \[
     S = \left\{ p_{ij},\ x_{ij},\ y_{ij},\ t_k 
             \st 1 \leq i < j \leq n, 1 \leq k \leq n \right\}
  \]
  where $p_{ij}=p_{ji}$, $x_{ij}=x_{ji}$, $y_{ij}=y_{ji}$ and
  $t_k$ are the following elements of $\PureHilden{2n}$.  Here
  all of the remaining strings lie behind those shown.
  \[
     p_{ij} = \generatorp \qquad
     x_{ij} = \generatorx \qquad
     y_{ij} = \generatory \qquad
     t_k    = \generatort 
  \]

  Let $R$ be the following relations. 
\begin{align}
  p_{ij}\,t_{k} & = t_k\,p_{ij}  &&      \tag{C-$pt$} \label{C-pt} \\
     t_i\,t_{j} & = t_j\,t_i     &&      \tag{C-$tt$} \label{C-tt} \\
  x_{ij}\,t_{k} & = t_k\,x_{ij}  && i < j \qquad k \neq i
                                         \tag{C-$xt$} \label{C-xt} \\
  y_{ij}\,t_{k} & = t_k\,y_{ij}  && i < j \qquad k \neq j
                                         \tag{C-$yt$} \label{C-yt} \\
  \alpha_{ij}\,\beta_{kl} & = \beta_{kl}\,\alpha_{ij}
          && \begin{array}{c}
                    \alpha, \beta \in \{p, x, y\}, \\
                    (i, j, k, l) \text{ cyclically ordered}
             \end{array} \tag{C1} \label{C1} \\
  \alpha_{ij}\,\,\beta_{ik}\,\gamma_{jk}
                & = \beta_{ik}\,\gamma_{jk}\,\,\alpha_{ij}
          && \begin{array}{c}
                    (i, j, k) \text{ cyclically ordered,} \\
                    (\alpha, \beta, \gamma)
                    \text{ as in Table~\ref{C2-table}}
             \end{array} \tag{C2} \label{C2} \\
  \alpha_{ik}\,\,p_{jk}\,\beta_{jl}\,p_{jk}^{-1}
      & = p_{jk}\,\beta_{jl}\,p_{jk}^{-1}\,\,\alpha_{ik}
          && \begin{array}{c}
                    \alpha, \beta \in \{p, x, y\}, \\
                    (i, j, k, l) \text{ cyclically ordered} 
             \end{array} \tag{C3} \label{C3} \\
  x_{ij}\,p_{ij}\,t_i &= p_{ij}\,t_i\,x_{ij} && i < j
                                         \tag{M-$x$} \label{M-x} \\
  y_{ij}\,p_{ij}\,t_j &= p_{ij}\,t_j\,y_{ij} && i < j
                                         \tag{M-$y$} \label{M-y} 
\end{align}

\begin{table}[!htbp]
  \centering
  \begin{tabular}{C|C}
     i < j < k & \begin{array}{cccc} (p,p,p) & (p,y,y) & (x,p,p) & (x,x,p) \\
                                     (x,y,y) & (y,p,p) & (y,p,x) & (y,y,y) 
                 \end{array} \\ \hline 
     j < k < i & \begin{array}{cccc}(p,p,p) & (p,x,y) & (x,p,p) & (x,p,x) \\
                                    (x,x,y) & (y,p,p) & (y,x,y) & (y,y,p) 
                 \end{array} \\ \hline
     k < i < j & \begin{array}{cccc} (p,p,p) & (p,x,x) & (x,p,p) & (x,x,x) \\
                                     (x,y,p) & (y,p,p) & (y,p,y) & (y,x,x) 
                 \end{array}
  \end{tabular}
  \caption{The values of $(\alpha, \beta, \gamma)$ for \eqref{C2}\protect\footnotemark}
  \label{C2-table}
\end{table}

\footnotetext{In fact Table~\ref{C2-table} lists all possible triples for which
    \eqref{C2} holds.  These were found using the {\sc Magma} computational
     algebra system\cite{Magma}.}

\end{theorem}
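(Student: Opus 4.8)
The plan is to obtain the presentation by running the standard procedure for reading off a presentation from a group action on a simply connected complex, applied to the subgroup $\PureHilden{2n} \leq \Hilden{2n}$, and reusing the complex $\X{}$ and action from the previous paper \cite{Tawn}. Since $\PureBraid{2n}$ is normal in $\Braid{2n}$, the group $\PureHilden{2n} = \PureBraid{2n} \intersect \Hilden{2n}$ is a normal subgroup of $\Hilden{2n}$, and in particular it acts on the same complex $\X{}$; the work is to carry out the computation for this subgroup and then simplify the raw output into the clean list of generators and relations stated above.

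First I would recall from \cite{Tawn} that $\X{}$ is simply connected, so that only its cells of dimension at most two are needed and a presentation of $\PureHilden{2n}$ is assembled from: generators and relations for the stabilisers of a chosen set of orbit representatives of vertices; one generator for each orbit representative of an edge, recording how that edge conjugates the adjacent vertex stabilisers; and one relation for each orbit representative of a $2$-cell, obtained by reading around its boundary. The new feature, compared with the computation for $\Hilden{2n}$, is that on passing to the smaller group $\PureHilden{2n}$ each $\Hilden{2n}$-orbit of cells splits into several $\PureHilden{2n}$-orbits, so there are more orbit representatives to track, but the corresponding stabilisers are simpler. I would then identify the edge and stabiliser generators produced this way with the elements $p_{ij}$, $x_{ij}$, $y_{ij}$ and $t_k$ pictured in the statement, verifying directly from the diagrams that these lie in $\PureHilden{2n}$ and generate.

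The resulting relations fall into two kinds. The commuting relations \eqref{C-pt}--\eqref{C3} record either a pair of commuting stabiliser elements or the way two generators supported on different index sets interact, with the precise form dictated by the cyclic order of the indices and, in \eqref{C3}, by an intervening conjugation. The mixing relations \eqref{M-x} and \eqref{M-y}, by contrast, I expect to arise from the boundaries of the $2$-cells that genuinely combine the three families $p$, $x$, $y$ on a single pair of indices. To keep the case analysis finite and manageable I would exploit the conjugation action advertised in the abstract: a copy of the framed braid group $\FramedBraid{n}$, permuting and twisting the $n$ caps, sits inside $\Hilden{2n}$ and normalises $\PureHilden{2n}$, and its underlying permutation realises every permutation of the index set $\{1, \ldots, n\}$. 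This action sends generators to generators and relations to relations of the same shape, so it is enough to derive one representative of each orbit of $2$-cells and then transport the resulting relation across its orbit.

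The main obstacle will be completeness: showing that no relations beyond those listed are required, and in particular pinning down exactly which triples $(\alpha, \beta, \gamma)$ occur in \eqref{C2}. This is delicate because the admissible triples depend non-uniformly on which of the cyclic orderings $i<j<k$, $j<k<i$, or $k<i<j$ holds, as recorded in Table~\ref{C2-table}, so a direct guess is liable either to omit valid relations or to introduce redundant ones. My strategy would be to derive the $2$-cell relations geometrically for one representative ordering, propagate them using the framed braid group action, and finally confirm that the finite set of triples so obtained is precisely the list of Table~\ref{C2-table}; the footnote's reference to a {\sc Magma} computation\cite{Magma} indicates that this last completeness check is most safely carried out by machine rather than by hand.
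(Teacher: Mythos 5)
Your overall strategy matches the paper's: the same complex $\X{n}$ and the same method for reading a presentation off the action are used, and the conjugation action of $\FramedBraid{n}$ is indeed the device that cuts down the case analysis. However, three points in your outline are either wrong or conceal the real work. First, the method as used requires transitivity on vertices, and it is a genuine (if easy) fact to check that the smaller group $\PureHilden{2n}$ still acts transitively on $\X{n}^0$ --- the splitting into more orbits happens only for edges and faces, while the vertex stabiliser shrinks from the framed braid group to the framed pure braid group $\FramedPureBraid{n}$. Second, you place the mixing relations \eqref{M-x} and \eqref{M-y} at the $2$-cells; in fact they arise from the edge stabilisers (the $R_1$ relations --- for example \eqref{M-x} records that $p_{12}\,t_1$ stabilises the edge $(v_0, v_0\cdot x_{12})$), while the face relations $R_3$ for the chosen representatives all turn out to be trivial or immediate consequences of \eqref{C1} and \eqref{C2}.

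The more serious gap is the claim that the $\FramedBraid{n}$ action ``sends generators to generators and relations to relations of the same shape,'' so that transporting one representative per orbit is automatic. This is false as stated: the action must be lifted to an action $\Phi$ on the free group on $S$, and that lift sends, for instance, $x_{i,i+1}$ to $t_{i+1}^{-1}\,y_{i,i+1}\,t_{i+1}$ under $\sigma_i$ and $x_{ij}$ to $x_{ij}^{-1}\,p_{ij}$ under $\tau_i$. What must actually be proved is that $\Phi_g$ applied to every relation in $R$ yields a consequence of $R$, and that $\Phi_g$ of each edge-orbit generator equals $h_1\, r_{\lambda'}\, h_2$ modulo $R$ for words $h_1, h_2$ in the vertex stabiliser. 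These verifications constitute the bulk of the proof (a long case-by-case rewriting), and they are exactly where the precise contents of Table~\ref{C2-table} are forced: the listed triples must be closed, up to derivability, under $\Phi$. Your plan defers the choice of triples to a machine check of which relations hold in the group, but soundness of the triples is not the delicate point --- what needs care is that the listed relations suffice to derive all the transported ones, and that cannot be read off from the group-level observation that conjugation permutes the cell orbits.
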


As with the braid group, the Hilden group can be viewed as a mapping
class group.  Let $B^3_+$ be a half ball such that it contains the caps
and let $S^2_+ = \boundary B^3_+$ be its boundary.  The half ball and 
half sphere intersect the plane in a 2-ball $B^2$ and a circle $S^1$.
We now have that $\Hilden{2n} = \MCG(B^3_+, a, S^2_+)$, i.e.\ the group of
isotopy classes of self homeomorphisms of $B^3_+$ which preserve $a$
setwise and $S^2_+$ pointwise.  The inclusion $(B^2, \boundary a, S^1)
\includes (B^3_+, a, S^2_+)$ induces the embedding $\Hilden{2n} \includes
\Braid{2n}$.

In \cite{Tawn} we used the mapping class viewpoint to define an action
of the Hilden group on a cellular complex. We then used the method
of Hatcher--Thurston\cite{HatcherThurston}, Wajnryb\cite{Wajnryb1}%
\cite{Wajnryb2}\cite{Wajnryb3}, etc.\ to compute a presentation from this
action.  In this paper we will use the same method with the same complex
and action to compute a presentation for the pure Hilden group.

We recall the method in Section~\ref{method}, the complex in
Section~\ref{complex} and go on to compute the vertex stabiliser and
edge orbits in Section~\ref{stabiliser} and Section~\ref{edges}.
To reduce the number of cases we will use an action of the framed braid
group on the pure Hilden group.  The required properties of this action
are given in Section~\ref{action-properties}.  In Sections~\ref{R1},
\ref{R2} and \ref{R3}, we make use of this action to show that the $R_1$,
$R_2$ and $R_3$ relations follow from $R$.  We then finish by
constructing this action and showing that it satisfies the required
properties in Section~\ref{definePhi}.

\section{The method} \label{method}

We will now summarise \S 2 of \cite{Tawn} which in turn follows \S 2 
``Une M\'ethode pour pr\'esenter G'' of Laudenbach\cite{Laudenbach}.
This is the method used by Hatcher--Thurston\cite{HatcherThurston}, 
Wajnryb\cite{Wajnryb1}\cite{Wajnryb2}\cite{Wajnryb3}, etc.\ to calculate
presentations for surface and handlebody mapping class groups.  

Suppose that $X$ is a connected simply-connected cellular 2-complex
such that each attaching map is injective and that each cell is uniquely
determined by its boundary.  Suppose that $G$ is a group acting
cellularly on the right of $X$, and that this action is transitive on
the vertex set $X^0$. Pick a vertex $v_0 \in X^0$ as a basepoint and
let $H$ denote its stabiliser in $G$, i.e.\ $H = \{g \in G \st v_0 \cdot
g = v_0 \}$.  Suppose that $H$ has a presentation with generating set
$S_0$ and relations $R_0$, i.e.\ $H = \langle S_0 | R_0 \rangle$.

Given vertices $u,v \in X^0$ such that $\{u,v\}$ is the boundary of an
edge of $X$ we will write $(u,v)$ for this (oriented) edge.
Given a sequence $v_1, v_2, \ldots, v_k$ of vertices such that either
$v_i = v_{i+1}$ or $(v_i, v_{i+1})$ forms an edge we will write
$(v_1, v_2, \ldots, v_k)$ for the path traversing these edges.
Whenever $v_i = v_{i+1}$ we shall say that $v_i$ is a stationary point.

Suppose that $\{e_\lambda\}_{\lambda\in\Lambda}$ is a set of representatives
for the orbits of the edges of $X$, i.e.
$X^1 = \Union_{\lambda \in \Lambda} e_\lambda G$ and
$e_\lambda G = e_{\lambda'} G $ only if $\lambda = \lambda'$.
Since the action of $G$ is transitive on $X^0$ we may assume that each
$e_\lambda$ starts at $v_0$ and that we can find $r_\lambda \in G$
such that each $e_\lambda = (v_0, v_0 \cdot r_\lambda)$.  Let
$S_1 = \{r_\lambda\}_{\lambda\in\Lambda}$.

Suppose that $\{f_\mu\}_{\mu\in M}$ is a set of representatives for the
orbits of the faces of $X$.  Again, since the action is transitive on
$X^0$, we may assume that the boundary of each face $f_\mu$ contains
the vertex $v_0$.

\begin{definition}
  An \emph{h-product of length} $k$ is a word of the form
  \[
    h_{k+1}\ r_{\lambda_k} h_k\ r_{\lambda_{k-1}} h_{k-1}\ 
                                             \cdots\ r_{\lambda_1} h_1
  \]
  where each $\lambda_i \in \Lambda$ and each of the $h_i$ are words in
  $H$. To each h-product we can associate an edge path
  $P = (v_0, v_1, \ldots, v_k)$ in $X$ starting at $v_0$ then visiting
  the vertices $v_1 = v_0 \cdot r_{\lambda_1} h_1$,
  $v_2 = v_0 \cdot r_{\lambda_2} h_2\ r_{\lambda_1} h_1$, etc. This means
  that the edge $(v_{i-1}, v_i)$ is in the orbit of
  $(v_0, v_0 \cdot r_{\lambda_i})$.  Given any edge path starting at
  $v_0$ we can choose an h-product to represent it.
\end{definition}

We can now choose the following three sets of relations.
\begin{itemize}
\item[$R_1$:] For each edge orbit representative $e_\lambda$ pick a
  generating set $T$ for the stabiliser of this edge, i.e.
  $\langle T \rangle = \stab_G(v_0)
                             \intersect \stab_G(v_0 \cdot r_\lambda)$.
  For each $t \in T$ we have the relation
  $r_{\lambda} t r_{\lambda}^{-1}=h$ for some word $h \in H$.

\item[$R_2$:] For each $e_\lambda$ we have a relation
  $r_{\lambda'} h\ r_\lambda = h'$ where the LHS is a choice of h-product
  for the path $(v_0,v_0 \cdot r_\lambda, v_0)$ and $h'$ is some word
  in $H$.

\item[$R_3$:]  For each face orbit representative $f_\mu$ with boundary
  $(v_0, v_1, \ldots, v_{k-1}, v_0)$ choose an h-product representing
  this path and a word $h \in H$ such that
  $r_{\lambda_k} h_k \cdots r_{\lambda_1} h_1 = h$.
\end{itemize}

\begin{theorem}\label{methodthm}
  The group $G$ has a presentation with generators $S_0$ and $S_1$ and
  relation $R_0$, $R_1$, $R_2$ and $R_3$.
  \[ G = \langle S_0 \union S_1
                     | R_0 \union R_1 \union R_2 \union R_3 \rangle \]
\end{theorem}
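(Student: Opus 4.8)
The plan is to introduce the group $\tilde G = \langle S_0 \union S_1 \mid R_0 \union R_1 \union R_2 \union R_3 \rangle$ defined by the proposed presentation. Since every relation in $R_0, R_1, R_2, R_3$ is by construction a valid identity in $G$, there is a well-defined homomorphism $\pi \colon \tilde G \to G$ sending each generator to the corresponding element of $G$. The theorem then amounts to showing that $\pi$ is an isomorphism, and I would split this into surjectivity (the generating set really generates $G$) and injectivity (the relations really suffice).

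For surjectivity I would argue as follows. Take any $g \in G$. Since $X$ is connected there is an edge path $(v_0, v_1, \ldots, v_k = v_0 \cdot g)$ from $v_0$ to $v_0 \cdot g$. Proceeding by induction on $k$, the final edge $(v_{k-1}, v_k)$ lies in some orbit $e_\lambda G$, so there is a group element of the form $r_\lambda h$ (with $h \in H$) carrying $e_\lambda = (v_0, v_0 \cdot r_\lambda)$ onto a translate of $(v_{k-1}, v_k)$; peeling this factor off reduces the path length, and the base case $k = 0$ is handled by $H = \langle S_0 \rangle$. This exhibits $g$ as an h-product, hence as a word in $S_0 \union S_1$.

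For injectivity, suppose $w$ is a word in $S_0 \union S_1$ with $\pi(w) = 1$. I would first rewrite $w$ as an h-product and read off its associated edge path $P$; because $\pi(w)$ fixes $v_0$, this path is a loop based at $v_0$. Since $X$ is simply connected, $P$ is null-homotopic, so it can be reduced to the constant loop by a finite sequence of elementary moves: insertion or deletion of a backtrack $(v, u, v)$ along a single edge, and a homotopy pushing the path across a single $2$-cell. The heart of the argument is to show that each elementary move alters the chosen h-product only by relations already in $R_0 \union R_1 \union R_2 \union R_3$, so that contracting $P$ rewrites $w$ to the empty word within $\tilde G$. A backtrack is absorbed by an $R_2$ relation (the relation attached to the loop $(v_0, v_0 \cdot r_\lambda, v_0)$, suitably conjugated into place), while a push across a $2$-cell is absorbed by an $R_3$ relation applied to the appropriate face orbit representative $f_\mu$.

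The main obstacle, and the step requiring the most care, is the bookkeeping needed to make these moves literally correspond to the listed relations. The difficulty is that an h-product encodes not just a geometric path but a specific choice, at each edge, of the element $r_\lambda h$ realizing that edge; two h-products tracing the same path can differ by an element of the edge stabiliser $\stab_G(v_0) \intersect \stab_G(v_0 \cdot r_\lambda)$. The relations $R_1$ are exactly what is needed to reconcile these choices, since they allow one to slide a stabiliser generator $t$ past $r_\lambda$ at the cost of an element of $H$, and hence (together with $R_0$) to show that any two h-products representing the same edge path are equal in $\tilde G$. Once this normalisation lemma is in hand, each backtrack or face move can be placed in a standard position where $R_2$ or $R_3$ applies verbatim, and an induction on the number of elementary moves closes the argument. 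This is precisely the scheme of Laudenbach and of Hatcher--Thurston that the method section follows, so the real work lies in checking that the hypotheses (injective attaching maps, each cell determined by its boundary, and simple connectivity of $X$) genuinely license the homotopy reduction.
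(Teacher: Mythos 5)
Your proposal is correct and is essentially the standard Laudenbach/Hatcher--Thurston argument: the paper itself states Theorem~\ref{methodthm} without proof, deferring to Laudenbach and to the author's earlier paper, and the proof given there follows exactly your scheme (surjectivity from connectedness of $X$ via h-products, injectivity from simple connectedness, with $R_1$ reconciling choices of h-products over a common edge path, $R_2$ absorbing backtracks, and $R_3$ absorbing pushes across $2$-cells). Your identification of the h-product normalisation step --- that two h-products over the same path differ at each edge by an element of $\stab_G(v_0) \intersect \stab_G(v_0 \cdot r_\lambda)$, which is precisely what the $R_1$ relations control --- is the key point, and it is handled the same way in the cited sources.
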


\section{The complex} \label{complex}

An embedded disc $D \subseteq \halfspace$ is said to \emph{cut out $a_i$}
if the interior of $D$ is disjoint from $a$, the arc $a_i$ is contained
in the boundary of $D$ and the boundary of $D$ lies in
$a_i \union \boundary \halfspace$, i.e.\ $a_i \subset \boundary D$ and
$\boundary D \subset a_i \union \boundary\halfspace$.
A \emph{cut system for $a$} is the isotopy class of $n$ pairwise
disjoint discs $\langle D_1,D_2,\ldots D_n\rangle$ where each $D_i$
cuts out the arc $a_i$. Say that two cut systems
$\langle D_1, D_2, \ldots, D_n \rangle$ and
$\langle E_1, E_2, \ldots, E_n \rangle$ differ by a simple move of length
$l$ if for some $i$ we have that $D_i \intersect E_i = a_i$, for all 
$j \neq i$ $D_j = E_j$ and the number of $a_i$ in the bounded component
of $\halfspace \setminus D_i \union E_i$ equals $l$.  If
this is the case we will suppress the non-changing discs and write
$\langle D_i \rangle \text{---} \langle E_i \rangle$.

We will say that a rectangle $(\langle D,E\rangle, \langle D',E\rangle,
\langle D',E'\rangle, \langle D,E'\rangle, \langle D,E\rangle)$ is
\emph{nested} if $E\union E'$ lies in the bounded component of
$\halfspace \setminus D \union D'$ or vice versa, i.e.\ if one pair of
changing discs lies underneath the other.

\begin{definition}
  Define the complex $\X{n}$ as follows.  The set of all
  cut systems for $a$ forms the vertex set $\X{n}^0$.  Two vertices
  are connected by a single edge iff they differ by a simple move of
  length one or two.  Finally, glue faces into every non-nested 
  rectangle of length one edges, every nested rectangle and every 
  triangle.
  Define the basepoint $v_0$ to be $\langle d_1, d_2, \ldots, d_n \rangle$
  where the $d_i$ are vertical discs below the $a_i$, see Figure~\ref{caps}.
\end{definition}

We will say that $a_j$ lies under the edge 
$(\langle D_i \rangle, \langle E_i \rangle)$ if it is contained in the
bounded component of the complement of $D_i \union E_i$.  At most two
discs lie under an edge.

In \cite{Tawn} we proved the following.
\begin{theorem} \label{X-simply-connected}
  The complex $\X{n}$ is connected and simply connected. \qed
\end{theorem}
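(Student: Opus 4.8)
\section*{Proof proposal}

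The plan is to prove connectivity and simple-connectivity separately, organising both around the same reframing and modelling the arguments on the cut-system complexes of Hatcher--Thurston\cite{HatcherThurston}. The organising idea is to regard the discs cutting out the $a_i$ as compressing discs of the genus-$n$ handlebody obtained from $\halfspace$ by drilling out the arcs $a$. Under this correspondence a cut system becomes a complete system of $n$ disjoint meridian discs cutting the handlebody into a ball, a simple move of length one or two becomes an elementary surgery of a single disc across a band, and Theorem~\ref{X-simply-connected} becomes a connectivity and simple-connectivity statement for the complex of such disc systems. I will, however, keep the argument phrased directly in terms of the discs $D_i \subseteq \halfspace$, since the combinatorial data of which arcs lie under each edge is exactly what the face relations are built to record.

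For connectivity I would fix the basepoint $v_0 = \langle d_1, \ldots, d_n \rangle$ and, given an arbitrary cut system $\langle D_1, \ldots, D_n \rangle$, isotope the $D_i$ into minimal position with respect to the standard discs $d_j$, so that their intersection with $\bigcup_j d_j$ is a disjoint union of interior arcs and circles. Taking the total number of these intersection components as a complexity, I would find an innermost disc of intersection on some $d_j$ and surger the relevant $D_i$ across it. This surgery can be realised by a single simple move, of length one or two according to how many arcs the surgering band encloses, and it strictly lowers the complexity. Iterating drives the complexity to zero, at which point the system is isotopic to $v_0$; hence any vertex is joined to $v_0$ by an edge path and $\X{n}$ is connected.

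For simple-connectivity I would take an edge loop based at $v_0$ and contract it by induction on a suitable complexity of the whole loop, for instance the maximum over its vertices of the intersection number with $v_0$ used above. The elementary homotopies required are supplied precisely by the glued faces: a triangle fills the relation expressing a length-two move as a composition of two length-one moves (equivalently, a single surgery performed in two stages), while a rectangle fills the relation between two simple moves with disjoint support, the nested and non-nested cases distinguishing whether the two changing pairs of discs are linked in $\halfspace$. Pushing the highest-complexity portion of the loop downward across these faces, exactly as in the Cerf-theory sweep-out of \cite{HatcherThurston}, reduces the complexity of the loop until it lies in the star of $v_0$, where it is null-homotopic.

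The main obstacle is this last step, and specifically the verification that the three families of faces genuinely suffice. Two points need care. First, when several intersections are resolved the order of the surgeries is ambiguous, so I must check that any two resolution orders differ by a sequence of the given triangles and rectangles; this is where the bookkeeping of which arcs lie under each edge becomes essential. Second, the nested/non-nested dichotomy is not cosmetic: when the supports of two commuting moves are linked, the corresponding square is bounded in $\X{n}$ only as a nested rectangle, so I would need to confirm that every such linked square does arise among the glued nested faces rather than being left unfilled, and correspondingly that the unlinked squares arising in the reduction always have length-one edges so that they too are filled.
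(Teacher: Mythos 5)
The first thing to note is that this paper does not actually prove Theorem~\ref{X-simply-connected}: it is stated with an immediate end-of-proof mark and attributed to the author's earlier paper \cite{Tawn}, so there is no in-text argument to compare yours against. Your outline is nevertheless in the right spirit---the whole framework here is the Hatcher--Thurston/Wajnryb machinery, and a surgery-and-sweep-out argument on systems of discs is exactly the kind of proof one expects to find in \cite{Tawn}.

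As a standalone proof, however, your sketch has a concrete gap in the connectivity step. The edges of $\X{n}$ are only the simple moves of length one or two; a simple move of length $l>2$ is \emph{not} an edge. When you surger $D_i$ along an innermost disc of $D_i \intersect d_j$, the resulting pair of discs can have any number of arcs lying under it, so your assertion that the surgery ``can be realised by a single simple move, of length one or two according to how many arcs the surgering band encloses'' is not automatic: you must either show the innermost disc can always be chosen so that at most two arcs are enclosed, or prove separately that a move of length $l$ factors as an edge path of short moves---itself a nontrivial induction. A parallel issue infects the simple-connectivity step: the reduction must never be forced through a long move, and the verification that every square arising from reordering two surgeries is either a non-nested rectangle with length-one edges, a nested rectangle, or a composite of triangles is precisely the content that is absent. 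You flag these points yourself as ``obstacles,'' which is honest, but they are the substance of the theorem rather than routine checks, so what you have is a credible plan rather than a proof.
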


Up to homotopy the group $\Hilden{2n}$ acts on $(\halfspace,a)$ by
homeomorphisms, therefore it takes cut systems to cut systems.  The
edges and faces of $\X{n}$ are determined by the intersections of
pairs of discs, hence this action on $\X{n}^0$ extends to a cellular
action on $\X{n}$.

\begin{theorem} \label{transitive}
  The action of $\PureHilden{2n}$ on $\X{n}^0$ is transitive.
\end{theorem}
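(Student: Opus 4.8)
The plan is to exploit the fact that $\PureHilden{2n}$ is \emph{normal} in $\Hilden{2n}$: it is the intersection of $\Hilden{2n}$ with the normal subgroup $\PureBraid{2n} \trianglelefteq \Braid{2n}$. Writing $\pi\colon \Braid{2n} \to S_{2n}$ for the underlying--permutation homomorphism, we have $\PureHilden{2n} = \ker\pi \intersect \Hilden{2n}$, and the quotient $\Hilden{2n}/\PureHilden{2n}$ is isomorphic to the image $W = \pi(\Hilden{2n})$. Since every element of $\Hilden{2n}$ preserves $a = a_1 \union \cdots \union a_n$ setwise, it permutes the $n$ pairs of endpoints, so $W$ is contained in the subgroup of $S_{2n}$ preserving this pairing, namely the hyperoctahedral (signed permutation) group $B_n$ of order $2^n n!$.

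The reduction I would use is the following. The full group $\Hilden{2n}$ already acts transitively on $\X{n}^0$, as in \cite{Tawn} (this transitivity is exactly what makes the method of Section~\ref{method} applicable there). Hence it suffices to establish the factorisation $\Hilden{2n} = \PureHilden{2n}\cdot\stab_{\Hilden{2n}}(v_0)$: for then, given any vertex $v$, transitivity gives $h \in \Hilden{2n}$ with $v_0 \cdot h = v$, and writing $h = s\,p$ with $s \in \stab_{\Hilden{2n}}(v_0)$ and $p \in \PureHilden{2n}$ yields $v = v_0 \cdot (s p) = (v_0 \cdot s)\cdot p = v_0 \cdot p$, exhibiting $v$ in the $\PureHilden{2n}$--orbit of $v_0$. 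Because $\PureHilden{2n}$ is normal, this factorisation is equivalent to the statement that the stabiliser surjects onto the quotient, i.e.\ $\pi\bigl(\stab_{\Hilden{2n}}(v_0)\bigr) = W$.

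The geometric heart of the argument is therefore to realise every element of $W$ by a homeomorphism that fixes the standard cut system $v_0 = \langle d_1, \ldots, d_n\rangle$. Two families of such homeomorphisms should suffice. First, for each $k$ a \emph{cap flip} $\tau_k$ that rotates the $k$th wicket (the cap $a_k$ together with the vertical disc $d_k$) through a half turn: it is supported near $a_k$, is the identity on $D\times\{1\}$, carries $d_k$ to itself, and hence fixes $v_0$, while $\pi(\tau_k)$ is the sign change swapping the two endpoints of $a_k$. Second, for $i<j$ a \emph{wicket swap} $\rho_{ij}$ that rigidly interchanges the $i$th and $j$th wickets, sending $d_i \leftrightarrow d_j$ and so again fixing $v_0$, with $\pi(\rho_{ij})$ the transposition of the two corresponding endpoint--pairs. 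The flips generate the sign--change subgroup $(\Z/2)^n$ and the swaps generate $S_n$, so together they generate all of $B_n$; since their images lie in $\pi(\stab_{\Hilden{2n}}(v_0)) \subseteq W \subseteq B_n$, all three coincide and $\pi(\stab_{\Hilden{2n}}(v_0)) = W$, completing the proof.

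The step I expect to be the main obstacle is the verification underlying the previous paragraph: that the flips $\tau_k$ and swaps $\rho_{ij}$ can genuinely be chosen inside $\Hilden{2n}$ — supported away from the fixed part of the boundary, preserving $a$ setwise, and fixing the cut system $v_0$ up to isotopy — and that their images really exhaust $B_n$. An alternative, more hands--on route avoids the appeal to full Hilden transitivity: using that $\X{n}$ is connected (Theorem~\ref{X-simply-connected}), one propagates the $\PureHilden{2n}$--orbit of $v_0$ across edges, after which it is enough to realise each simple move of length one or two out of $v_0$ by one of the generators $p_{ij}, x_{ij}, y_{ij}, t_k$; this requires enumerating the neighbours of $v_0$ and is less economical than the normal subgroup argument above.
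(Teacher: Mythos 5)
Your argument is correct, but it takes a genuinely different route from the paper. The paper's proof is direct: given an arbitrary cut system $\langle D_1,\ldots,D_n\rangle$, it reruns the construction from \cite{Tawn}, sliding one endpoint of each $a_i$ around the arc $D_i\intersect\R^2$ and back to its starting point, and simply observes that the resulting braid is pure; this produces an explicit element of $\PureHilden{2n}$ carrying the given vertex to $v_0$. You instead take the transitivity of the full group $\Hilden{2n}$ as a black box and reduce the problem to finite group theory: since $\PureHilden{2n}$ is the kernel of $\pi|_{\Hilden{2n}}$, transitivity of the normal subgroup follows once $\stab_{\Hilden{2n}}(v_0)$ surjects onto $W=\pi(\Hilden{2n})\subseteq B_n$, and you realise enough of $B_n$ by wicket flips and wicket swaps fixing $v_0$. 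The step you flag as the main obstacle is in fact already present in the paper: your $\tau_k$ and $\rho_{ij}$ are (products of) the framed braid generators $\tau_j$ and $\sigma_i$ of Section~\ref{action-properties}, which the paper treats throughout as elements of the stabiliser of $v_0$ in $\Hilden{2n}$, and their images visibly generate the signed permutation group. What your approach buys is that the only geometric input beyond \cite{Tawn} is the verification of finitely many explicit stabilising homeomorphisms, and the sandwich $B_n\subseteq\pi(\stab_{\Hilden{2n}}(v_0))\subseteq W\subseteq B_n$ even identifies the quotient $\Hilden{2n}/\PureHilden{2n}\cong B_n$ as a by-product; what the paper's approach buys is a self-contained, vertex-by-vertex construction that exhibits the purity of the required element directly rather than extracting it from a coset decomposition.
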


\begin{proof}
  This exactly the same as the proof that the action of $\Hilden{2n}$
  on $\X{n}^0$ is transitive given in \cite{Tawn}.  All that is needed
  is to note that the constructed braids are pure. 

  Given a vertex $\langle D_1, D_2, \ldots, D_n \rangle$ of $\X{n}$,
  if we take each $i$ in turn and look at the intersection of $D_i$
  with $\R^2$.  We see that this defines a path from one end of $a_i$
  to the other.  If we now move one end around this path until it is
  close to the other and then move it straight back to its starting
  point we have an element of $\PureHilden{2n}$ that moves $D_i$ to
  $d_i$.  Combining all of these we see that $\langle D_1, D_2, \ldots,
  D_n \rangle$ is in the orbit of $v_0$, i.e.\ the action is transitive on
  $\X{n}^0$.
\end{proof}

\section{Vertex stabiliser} \label{stabiliser}

\begin{proposition} \label{vertex-stabiliser}
  The stabiliser of the vertex $v_0$ is the framed pure braid
  group $\FramedPureBraid{n}$ and so is isomorphic to 
  $\PureBraid{n} \times \Z^n$.
\end{proposition}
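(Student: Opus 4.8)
The plan is to deduce this from the corresponding statement for the full Hilden group. By the same band picture used in \cite{Tawn}, an element of $\Hilden{2n}$ fixing the cut system $v_0 = \langle d_1, \ldots, d_n \rangle$ preserves the isotopy class of the system of discs, hence of the bands obtained as regular neighbourhoods of the $a_i \union d_i$; the mapping classes preserving such a system of $n$ bands are precisely the ribbon braids, giving $\stab_{\Hilden{2n}}(v_0) = \FramedBraid{n} = \Braid{n} \ltimes \Z^n$. Since $\PureHilden{2n} = \PureBraid{2n} \intersect \Hilden{2n}$, the stabiliser of $v_0$ in $\PureHilden{2n}$ is $\FramedBraid{n} \intersect \PureBraid{2n}$, and it remains to identify this intersection with $\FramedPureBraid{n} = \PureBraid{n} \times \Z^n$.

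To do this I would examine the permutation homomorphism $\FramedBraid{n} \includes \Braid{2n} \to S_{2n}$ recording the permutation of the $2n$ endpoints. Under the Hilden embedding each band becomes a pair of adjacent strings, so a braid $\beta \in \Braid{n}$ on the bands permutes these $n$ pairs according to the image of $\beta$ in $S_n$, preserving the order of the two strings within each pair. Thus $\beta$ maps to the identity of $S_{2n}$ exactly when its underlying permutation is trivial, i.e.\ when $\beta \in \PureBraid{n}$. On the other hand each framing generator $t_k$ is a full twist of a single band, which fixes both of its endpoints, so the whole $\Z^n$ factor maps trivially to $S_{2n}$.

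Combining these, an element $(\beta, m) \in \Braid{n} \ltimes \Z^n$ is a pure braid in $\Braid{2n}$ if and only if $\beta \in \PureBraid{n}$, with no constraint on the framing $m \in \Z^n$. Hence $\FramedBraid{n} \intersect \PureBraid{2n} = \PureBraid{n} \ltimes \Z^n$, and because pure braids do not permute the bands the semidirect product action on the framing factor is trivial, so this is the direct product $\PureBraid{n} \times \Z^n = \FramedPureBraid{n}$.

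I expect the main obstacle to be pinning down the permutation induced on the $2n$ endpoints: one must verify that braiding the bands preserves the left/right ordering of the two strings of each pair (so that triviality of the endpoint permutation is equivalent to triviality of the band permutation), and that the framing generator $t_k$ is a full twist fixing its two endpoints rather than a half twist interchanging them. Both reduce to tracing a single generator $\sigma_i$ and a single twist through the embedding, but this is the step where the geometric conventions must be fixed carefully.
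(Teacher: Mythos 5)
Your route is genuinely different from the paper's. The paper argues directly: restricting to the plane, an element of $\PureHilden{2n}$ stabilising $v_0$ is a motion of the $n$ ordered line segments $d_i \intersect \R^2$, so the stabiliser is the fundamental group of the configuration space of $n$ ordered line segments in the plane, which is $\FramedPureBraid{n} \cong \PureBraid{n} \times \Z^n$. You instead first identify $\stab_{\Hilden{2n}}(v_0)$ via the band picture and then intersect with $\PureBraid{2n}$ using the permutation homomorphism to $S_{2n}$. Your reduction is legitimate (indeed $\stab_{\PureHilden{2n}}(v_0) = \PureBraid{2n} \intersect \stab_{\Hilden{2n}}(v_0)$ is immediate from the definitions), and it has the virtue of quoting the known $\Hilden{2n}$ computation rather than redoing a configuration-space argument; the paper's version is shorter but leans on the reader accepting the ordered-line-segment configuration space identification directly.

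One step of your argument is wrong as written, though the conclusion survives, and it is exactly the point you flagged at the end. The framing part of $\stab_{\Hilden{2n}}(v_0)$ is generated by \emph{half} twists of the bands (the paper's $\tau_j$: a $180^\circ$ rotation of a wicket preserves both $a_j$ and $d_j$ setwise, so it lies in the stabiliser, and Lemma~\ref{inverse-squared} together with property~(A) shows $\tau_j^2$ acts as $t_j$). A half twist interchanges the two endpoints of its band, so the $\Z^n$ factor of $\stab_{\Hilden{2n}}(v_0)$ does \emph{not} map trivially to $S_{2n}$, and your claim that there is ``no constraint on the framing $m$'' fails for that group. The correct statement is that purity forces each framing coordinate to be even, so the intersection is $\PureBraid{n} \ltimes (2\Z)^n$ with trivial action, which is again $\PureBraid{n} \times \Z^n$ generated by the full twists $t_k$ --- the paper's $\FramedPureBraid{n}$. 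The other point you flagged (that a band crossing $\sigma_i$ preserves the left/right order within each pair of endpoints, so triviality of the endpoint permutation is equivalent to purity of the underlying band braid) does check out with the paper's conventions. With the half-twist correction made, your proof is complete.
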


\begin{proof}
  If we restrict our attention to $\R^2$, elements of $\PureHilden{2n}$
  can be thought of as motions of the end points of the $a_i$.  For
  elements of the stabiliser of $v_0$ this motion moves the line segments
  $d_i \intersect \R^2$ so this is the fundamental group of configurations
  of $n$ ordered line segments in the plain, the framed pure braid group.
\end{proof}

The pure braid group has a presentation with generators $p_{ij}$ and
relations \eqref{C1}, \eqref{C2} and \eqref{C3} (with $\alpha= \beta
= \gamma = p$).  See, for example, Margalit--McCammond\cite{MargalitMcCammond}.

From this we see that the vertex stabiliser is generated by the $p_{ij}$
and $t_k$, that all relations between these elements follow from \eqref{C-pt},
\eqref{C-tt}, \eqref{C1}, \eqref{C2} and \eqref{C3}, and hence the $R_0$
relations are included in $R$.

\section{Edge orbits} \label{edges}

Let $E$ denote the set of all oriented edges that start at $v_0$ the
basepoint of $\X{n}$.  We will now find a representative of each orbit
of the $\FramedPureBraid{n}$ action on $E$, thus giving a set of 
$\PureHilden{2n}$ edge orbit representatives as required by 
Theorem~\ref{methodthm}.  Given an edge $(v_0,v) \in E$, because $v =
\langle D_1, D_2, \ldots, D_n \rangle$ differs from $v_0$ by a simple
move, there exists a unique $i$ such that $D_i \neq d_i$.

If the edge is of length one then there is a unique $d_j$
under $D_i \union d_i$.  All of the remaining discs, $d_k$
for $k \neq i, j$, can be moved by an element of $\FramedPureBraid{n}$
away from $D_i \union d_i$ and then back from behind to their
original positions.  After applying $t_i^p$ for some $p$ we
have one of the following possibilities, each of which lie
in a different orbit.
\begin{longtable}{CCC}
        \edgevovoxij       &         \edgevovoXij         & \quad \for i < j \\[1.2em]
   (v_0, v_0 \cdot x_{ij}) & (v_0, v_0 \cdot x_{ij}^{-1}) & \\[1em]
        \edgevovoyij       &         \edgevovoYij         & \quad \for j < i \\[1.2em]
   (v_0, v_0 \cdot y_{ij}) & (v_0, v_0 \cdot y_{ij}^{-1}) &       
\end{longtable}

Similarly, if the edge is of length two then there exists two discs
$d_j$ and $d_k$, under $d_i \union D_i$.  We may assume that $j < k$. 
As in the previous case there is an element of $\FramedPureBraid{n}$
which takes $(v_0,v)$ to one of the following possibilities, each of
which lie in different orbits.
\begin{longtable}{CCC}
           \edgevovoxijxik             & \edgevovoXikXij & \qquad \for i < j < k \\[1em]
   (v_0, v_0 \cdot x_{ij}\,x_{ik})
                               & (v_0, v_0 \cdot x_{ik}^{-1}\,x_{ij}^{-1})  \\
                                                                        \\
           \edgevovoxjkyij             & \edgevovoYijXjk & \qquad \for j < i < k \\[1em]
   (v_0, v_0 \cdot x_{ik}\,y_{ij})
                               & (v_0, v_0 \cdot y_{ij}^{-1}\,x_{ik}^{-1})  \\
                                                                        \\
           \edgevovoyikyjk             & \edgevovoYjkYik & \qquad \for j < k < i \\[1em]
   (v_0, v_0 \cdot y_{ij}\,y_{ik})
                               & (v_0, v_0 \cdot y_{ik}^{-1}\,y_{ij}^{-1})
\end{longtable}

\begin{proposition}
  The pure Hilden group $\PureHilden{2n}$ is generated by $p_{ij}$,
  $t_i$, $x_{ij}$ and $y_{ij}$.
  \[
     \PureHilden{2n} = \left\langle S \right\rangle
  \]
\end{proposition}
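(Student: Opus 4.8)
The plan is to apply Theorem~\ref{methodthm} to the action of $\PureHilden{2n}$ on the complex $\X{n}$. The generators of $G = \PureHilden{2n}$ supplied by that theorem are $S_0 \cup S_1$, where $S_0$ generates the vertex stabiliser and $S_1 = \{r_\lambda\}$ is a set of elements realizing the edge orbit representatives. By Proposition~\ref{vertex-stabiliser} the stabiliser of $v_0$ is the framed pure braid group $\FramedPureBraid{n} \cong \PureBraid{n} \times \Z^n$, which is generated by the $p_{ij}$ (from the pure braid factor, per Margalit--McCammond) together with the framings $t_k$; this gives $S_0 = \{p_{ij}, t_k\}$. Thus the only task remaining is to exhibit elements $r_\lambda$ for each edge orbit and observe that they can be taken from $\{x_{ij}, y_{ij}\}$, so that $S_0 \cup S_1 \subseteq S$ and hence $\PureHilden{2n} = \langle S \rangle$.

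The key steps, in order, are as follows. First I would invoke Theorem~\ref{transitive}, which guarantees the action on $\X{n}^0$ is transitive, so that the hypotheses of Theorem~\ref{methodthm} regarding vertex-transitivity are met and every edge orbit has a representative starting at $v_0$. Second, I would appeal to the classification of edge orbits carried out in the preceding paragraphs of Section~\ref{edges}: each length-one edge orbit is represented by one of $(v_0, v_0 \cdot x_{ij})$, $(v_0, v_0 \cdot x_{ij}^{-1})$ for $i<j$, or $(v_0, v_0 \cdot y_{ij})$, $(v_0, v_0 \cdot y_{ij}^{-1})$ for $j<i$; and each length-two edge orbit is represented by a product such as $x_{ij}\,x_{ik}$, $x_{ik}\,y_{ij}$, or $y_{ij}\,y_{ik}$ (and their inverses), according to the relative order of the indices. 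This means we may take $S_1$ to consist of the elements $x_{ij}$ and $y_{ij}$, possibly together with their inverses, which generate the same subgroup.

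Third, I would assemble the conclusion: since Theorem~\ref{methodthm} asserts that $G = \langle S_0 \cup S_1 \mid R_0 \cup R_1 \cup R_2 \cup R_3\rangle$, in particular $G$ is \emph{generated} by $S_0 \cup S_1$. Because $S_0 = \{p_{ij}, t_k\}$ and the edge representatives $r_\lambda$ lie in the subgroup generated by $\{x_{ij}, y_{ij}\}$, every $r_\lambda$ is a word in $S$, and therefore $S_0 \cup S_1 \subseteq \langle S \rangle$. Conversely $S \subseteq \PureHilden{2n}$ by construction of these elements. Hence $\PureHilden{2n} = \langle S \rangle$, which is the desired statement.

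I do not expect a serious obstacle here, since this proposition is essentially a corollary of the edge-orbit analysis already completed in this section: the generating assertion of Theorem~\ref{main-theorem} follows immediately once the orbit representatives are identified as words in the $x_{ij}$ and $y_{ij}$. The only point requiring mild care is confirming that the length-two edge representatives genuinely lie in $\langle x_{ij}, y_{ij}\rangle$ rather than introducing new generators — but this is clear from their explicit form as products of the length-one generators. The substantive content of the paper, namely that the listed relations $R$ suffice, is deferred to the later verification that the $R_1$, $R_2$, and $R_3$ relations follow from $R$; the present statement concerns only generation and is therefore the easy half.
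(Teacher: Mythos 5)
Your proposal is correct and follows essentially the same route as the paper: invoke Theorem~\ref{methodthm} to get generation by the vertex-stabiliser generators $p_{ij}, t_k$ together with the edge-orbit representatives $r_\lambda$, then observe that every $r_\lambda$ listed in Section~\ref{edges} is a word in the $x_{ij}$ and $y_{ij}$. The paper's own proof is just a terser version of exactly this argument.
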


\begin{proof}
  By the Theorem~\ref{methodthm} the group $\PureHilden{2n}$
  is generated by the generators of the vertex stabiliser and 
  $\{ r_\lambda \}$.  We have that
  \[
     \{ r_\lambda \} = \left\{ 
       \begin{array}{c|c}
         \begin{array}{ccc}
           x_{ij},    & x_{ij}^{-1} \\
           y_{ij},    & y_{ij}^{-1}
         \end{array} &  i < j 
       \end{array}
     \right\} \union \left\{
       \begin{array}{c|c}
         \begin{array}{cc}
           x_{ij}\,x_{ik}, & x_{ik}^{-1}\,x_{ij}^{-1} \\
           x_{jk}\,y_{ij}, & y_{ij}^{-1}\,x_{ik}^{-1} \\
           y_{ik}\,y_{jk}, & y_{jk}^{-1}\,y_{ik}^{-1}
         \end{array} & i < j <k
       \end{array}
     \right\}
  \]
  and so all of these generators either are contained in $S$ or can be
  written in terms of the elements of $S$.
\end{proof}

\section{Action of the framed braid group} \label{action-properties}

We have an embedding of the framed braid group on $n$ strings
$\FramedBraid{n}$ in the braid group on $2n$ strings given as
follows.
\[
   \sigma_i = \generatorsigma \qquad \qquad
   \tau_j   = \generatortau
\]
This makes $\FramedBraid{n}$ a subgroup of $\Hilden{2n}$.
It is clear that conjugation by elements of $\FramedBraid{n}$
preserves the pure Hilden group and hence we have a left action
of $\FramedBraid{n}$ on $\PureHilden{2n}$.  In fact this action
can be defined on the level of reduced words as well.  In other
words we have an action of $F\langle \sigma_i,\,\tau_j\rangle$, 
the free group on the letters $\sigma_i$ and $\tau_j$, on 
$F\langle p_{ij},\,x_{ij},\,y_{ij},\,t_k\rangle$, the free
group on the letters $p_{ij}$, $x_{ij}$, $y_{ij}$, $t_k$.
So we have a homomorphism
\begin{eqnarray*}
  F\langle \sigma_i,\,\tau_j \rangle & \longrightarrow 
              & \aut(F\langle p_{ij},\,x_{ij},\,y_{ij},\,t_k\rangle) \\
  g & \longmapsto & \Phi_g
\end{eqnarray*}

In Section~\ref{definePhi} we will construct $\Phi$
and then show that it satisfies the following properties.
For any word $g \in F\langle \sigma_i,\,\tau_j \rangle$,
\begin{enumerate}[(A)]
\item \label{A}
   for each $x \in F\langle p_{ij},\,x_{ij},\,y_{ij},\,t_k\rangle$
   we have $\Phi_g(x) =_{\Braid{2n}} g\,x\,g^{-1}$. 
\item \label{B}
   for any word $h \in F\langle p_{ij},\,t_k\rangle$ we have
   that $\Phi_g(h) \in F\langle p_{ij},\,t_k\rangle$.
\item \label{C}
   for each $r_\lambda$ we have that $\Phi_g(r_\lambda) =_R h_1 r_{\lambda'} h_2$
   for some $h_1, h_2 \in F\langle p_{ij},\,t_k\rangle$ and $r_{\lambda'}$.
\item \label{D}
   if $x =_R y$ then $\Phi_g(x) =_R \Phi_g(y)$.
\end{enumerate}

We will now assume the existance of such a $\Phi$ and use it
to show that $R_1$, $R_2$ and $R_3$ relations follow from 
those in $R$.

\section{The $R_1$ relations} \label{R1}

$R_1$ consist of a relation of the form $r_\lambda\,t r_\lambda^{-1}=h$
for each edge orbit representative $(v_0, v_0 \cdot r_\lambda)$, for
each $t$ in a generating set of the stabiliser of this edge and for some
word $h$ in $\FramedBraid{n}$.

\begin{proposition}
  The stabiliser of the edge $(v_0, v_0 \cdot x_{12})$ is 
  generated as follows.

  \[
     \stab(v_0, v_0\cdot x_{12}) =
        \Bigg\langle
        \begin{array}{cl}
          p_{ij}         & i, j > 2 \\
          t_k            & k > 1    \\
          p_{12} t_1     &          \\
          p_{1k}\ p_{2k} & k > 2  
        \end{array} \Bigg\rangle
  \]

\end{proposition}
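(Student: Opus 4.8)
The plan is to translate the statement into a question about a single cut disc and then separate the easy from the hard inclusion. By Proposition~\ref{vertex-stabiliser} we have $\stab(v_0) = \FramedPureBraid{n}$, and since the action is on the right, $g$ fixes $v_0 \cdot x_{12}$ exactly when $x_{12}\,g\,x_{12}^{-1} \in \stab(v_0)$, so that
\[
  \stab(v_0, v_0\cdot x_{12})
     = \FramedPureBraid{n} \intersect x_{12}^{-1}\FramedPureBraid{n}\,x_{12}.
\]
The vertex $v_0\cdot x_{12}$ is the cut system $\langle D_1, d_2,\ldots,d_n\rangle$, where $D_1$ cuts out $a_1$ with $a_2$ lying underneath it. As every element of $\PureHilden{2n}$ fixes each arc $a_i$ setwise, any $g\in\FramedPureBraid{n}$ already fixes each of $d_2,\ldots,d_n$; hence $g$ stabilises the edge if and only if it also fixes $D_1$ up to isotopy rel $a$. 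The proposition therefore reduces to identifying the subgroup $K = \{g\in\FramedPureBraid{n} : g\cdot D_1 = D_1\}$ and showing it is generated by the four listed families.

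For the inclusion of the listed elements in $K$ I would argue in the plane picture of Proposition~\ref{vertex-stabiliser}, where $\FramedPureBraid{n}$ is realised as motions of the $n$ pairs of feet and $D_1$ meets $\R^2$ in an arc which, together with $d_1$, bounds a region $\Delta$ containing the two feet of $a_2$. The generators $p_{ij}$ with $i,j>2$ and $t_k$ with $k>1$ are supported away from $\Delta$, or (for $t_2$) in a neighbourhood of $a_2$ disjoint from $D_1$, and so fix $D_1$; the product $p_{1k}p_{2k}$ carries $a_k$ once around the region $\Delta$ as a whole and hence returns $D_1=\boundary\Delta$ to itself; and $p_{12}t_1$ is the correctly framed interchange of $a_1$ and $a_2$ inside $\Delta$. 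The point to stress is that none of $p_{12}$, $t_1$, $p_{1k}$ or $p_{2k}$ fixes $D_1$ on its own --- each drags the wall $\boundary\Delta$ across a foot --- so only the displayed combinations survive, which is what dictates the shape of the generating set.

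The substantial direction is to show that $K$ contains nothing more. Here I plan to use the orbit of $D_1$: by orbit--stabiliser the cosets of $K$ in $\FramedPureBraid{n}$ correspond to the isotopy classes of cut discs for $a_1$ reachable from $D_1$, and these are distinguished by the linking numbers of the arcs $a_k$ ($k\ge 3$) with the disc $D_1$ together with a single integer recording the framing of $D_1$. This gives an explicit transversal, and a Reidemeister--Schreier rewriting of the presentation of $\FramedPureBraid{n}=\PureBraid{n}\times\Z^n$ (generators $p_{ij},t_k$, relations \eqref{C-pt}, \eqref{C-tt}, \eqref{C1}, \eqref{C2}, \eqref{C3}) expresses every element of $K$ in terms of the listed families. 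Equivalently, and more geometrically, given $g\in K$ written as a word in the $p_{ij}$ and $t_k$, I would comb the ``defect'' letters $p_{12}$, $p_{1k}$, $p_{2k}$ and $t_1$ to one end using the commutation relations, whereupon the hypothesis $g\cdot D_1=D_1$ forces the accumulated defect to cancel into factors of the form $p_{1k}p_{2k}$ and $p_{12}t_1$.

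I expect this last step to be the main obstacle: one must verify that the proposed coset data is a complete invariant of the orbit, so that the transversal really is exhausted, while keeping careful track of the framings $t_k$, since the interaction of $t_1$ with $p_{12}$ and of the pure-braid relations \eqref{C2} and \eqref{C3} with the combing is exactly where an incautious argument would drop a relation. A \textsc{Magma} computation, as used for Table~\ref{C2-table}, could serve as an independent check that these families generate $K$.
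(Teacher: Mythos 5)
Your first half is fine and matches the paper's picture: reducing the edge stabiliser to the subgroup of $\FramedPureBraid{n}$ preserving the extra disc $D_1$, and checking that each listed element preserves it, is essentially the argument given. The genuine gap is in the reverse inclusion, which you yourself flag as the main obstacle but do not close. Your proposed route rests on the claim that the orbit of $D_1$ under $\FramedPureBraid{n}$ is classified by the linking numbers of the $a_k$ ($k\ge 3$) with $D_1$ together with one framing integer. That is almost certainly false: the coset space $K\backslash\FramedPureBraid{n}$ corresponds to isotopy classes of arcs in a multiply punctured disc joining the two feet of $a_1$, and such classes are not detected by abelian winding data --- two discs can have identical linking numbers with every $a_k$ and yet be non-isotopic (the relevant ``difference'' lives in a free group, not its abelianisation). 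Without a correct complete invariant the Reidemeister--Schreier transversal is not exhausted, and the combing argument has the same hole: commuting the ``defect'' letters to one end uses relations such as \eqref{C2} and \eqref{C3} that conjugate generators rather than commute them, so the accumulated defect is a word in a nonabelian group and you have given no reason it must reduce to a product of the blocks $p_{1k}p_{2k}$ and $p_{12}t_1$.

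The paper avoids this entirely with a direct geometric decomposition: draw a line $L$ in the plane separating the second line segment from the third. Any motion preserving $D_1$ can be cut into time intervals in which either only segments on one side of $L$ move, or a single segment crosses $L$, and the constraint that $D_1$ is preserved forces any such crossing to encircle both the first and second segments before returning, i.e.\ to be $p_{1k}p_{2k}$. The pieces supported to the right of $L$ give $p_{ij}$, $t_k$ with $i,j,k>2$, and the pieces to the left give $t_2$ and $p_{12}t_1$. If you want to salvage your approach, you need either this kind of normal form for the motion itself, or an honest identification of the coset space (e.g.\ via the action on $\pi_1$ of the punctured plane), not just its abelian shadow.
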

\begin{proof}
  As $\stab(v_0, v_0\cdot x_{12})$ is a subgroup of 
  $\stab(v_0) = \FramedPureBraid{n}$ we can view the elements of 
  $\stab(v_0, v_0\cdot x_{12})$ as motions of line segments.
  If we draw a line $L$ between the second and third line 
  segments then this motion can be broken into section consisting
  only of motions of the segments to the right of $L$, sections
  consisting only of motions to the left of $L$ and the motion
  of a single segment across $L$ around both the first and second
  segment and then back across $L$.  The motions to the right
  are generated by $p_{ij}$ for $i,j > 2$ and $t_k$ for $k > 2$.
  The motions to the left are generated by $t_2$ and $p_{12}\ t_1$.
  And the motions across $L$ are of the form $p_{1k}\ p_{2k}$
  for $k > 2$.
\end{proof}

So the $R_1$ relations can be chosen as follows.
\begin{align}
  x_{12}\,\,p_{ij}\,\,x_{12}^{-1}         &= p_{ij} 
               \qquad \for i,j > 2
                                  \label{eq1} \tag{1} \\
  x_{12}\,\,t_k\,\,x_{12}^{-1}            &= t_k    
               \qquad \for k > 1
                                  \label{eq2} \tag{2} \\
  x_{12}\,\,p_{12}\,t_1\,\,x_{12}^{-1}    &= p_{12}\,t_1   
                                  \label{eq3} \tag{3} \\
  x_{12}\,\,p_{1k}\,p_{2k}\,\,x_{12}^{-1} &= p_{1k}\,p_{2k}
               \qquad \for k > 2   
                                  \label{eq4} \tag{4}
\end{align}
Relation \eqref{eq1} follows from \eqref{C1},
relation \eqref{eq2} follows from \eqref{C-xt},
relation \eqref{eq3} follows from \eqref{M-x}
and relation \eqref{eq4} follows from \eqref{C2}.

For the edge orbit representative $(v_0, v_0\cdot x_{12}\,x_{13})$
we can draw a line $L$ between the third and fourth line segment.
Motion of the segments to the right is generated by $p_{ij}$
for $i, j > 3$ and $t_k$ for $k > 3$.  Motion of the segments
to the left is generated by $p_{12}\,p_{13}\,t_1$, $t_2$, $t_3$
and $p_{23}$.  Finally the elements $p_{1k}\,p_{2k}\,p_{3k}$
give the motion between the two halves.  Therefore we have 
the following.

\begin{proposition}
  The stabiliser of the edge $(v_0, v_0\cdot x_{12}\,x_{13})$
  is generated as follows.

  \[
     \stab(v_0, v_0\cdot x_{12}\,x_{13}) =
        \left\langle
        \begin{array}{cl}
          p_{23}                              \\
          p_{ij}                   & i, j > 3 \\
          t_k                      & k > 1    \\
          p_{12}\, p_{13}\, t_1    &          \\  
          p_{1k}\, p_{2k}\, p_{3k} & k > 3  
        \end{array} \right\rangle
  \]

  \qed
\end{proposition}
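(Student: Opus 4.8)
The plan is to reproduce, with three grouped segments in place of two, the geometric argument used for $\stab(v_0, v_0 \cdot x_{12})$ in the preceding proposition. First I would invoke Proposition~\ref{vertex-stabiliser}: since $\stab(v_0, v_0 \cdot x_{12}\,x_{13})$ is a subgroup of $\stab(v_0) = \FramedPureBraid{n}$, each of its elements is realised as a motion of the $n$ ordered, framed line segments $d_i \intersect \R^2$. The terminal vertex $v_0 \cdot x_{12}\,x_{13}$ arises from $v_0$ by a length-two simple move on the first disc, so that both $d_2$ and $d_3$ lie under $d_1 \union D_1$. Consequently a vertex-stabilising motion fixes this edge exactly when it carries the disc $D_1$ back to itself; geometrically, it must preserve the way the first segment is looped around the pair $\{d_2, d_3\}$.

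Next I would introduce a line $L$ separating the third and fourth segments and, as in the length-one case, decompose an arbitrary stabiliser element into three kinds of motion: those supported to the right of $L$ (on segments $4, \ldots, n$), those supported to the left (on segments $1,2,3$), and the excursion of a single right-hand segment that crosses $L$, encircles the left-hand cluster, and returns. The right-hand motions form the framed pure braid group on segments $4,\ldots,n$, generated by the $p_{ij}$ with $i,j>3$ and the $t_k$ with $k>3$; a crossing segment $k>3$ must encircle all of $d_1, d_2, d_3$ to restore the configuration, contributing $p_{1k}\,p_{2k}\,p_{3k}$.

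The heart of the argument is the left-hand subgroup. Since $d_2$ and $d_3$ sit freely as a pair beneath $D_1$, they may braid around each other and spin, giving $p_{23}$, $t_2$ and $t_3$. The first segment, by contrast, can move only by dragging $D_1$ with it, which forces any admissible motion to carry segment $1$ around both $d_2$ and $d_3$ together, equipped with the framing the disc prescribes; this yields the single combined generator $p_{12}\,p_{13}\,t_1$, exactly mirroring the $p_{12}\,t_1$ that appeared in the two-segment case.

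The step I expect to be the main obstacle is justifying this last claim rigorously — that, modulo the other generators, the only motions of the first segment lying in the stabiliser are powers of $p_{12}\,p_{13}\,t_1$, and that the framing factor is precisely $t_1$. This demands a careful reading of the disc $D_1$ realising $x_{12}\,x_{13}$: one must check that splitting the loop around $d_2$ from the loop around $d_3$, or altering the framing, pushes $D_1$ off its isotopy class and hence off the edge. Once the constraint imposed by $D_1$ is pinned down in this way, the listed set is seen to generate $\stab(v_0, v_0 \cdot x_{12}\,x_{13})$, completing the proof.
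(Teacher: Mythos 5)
Your proposal follows the paper's proof exactly: the paper likewise draws a line $L$ between the third and fourth segments, generates the right-hand motions by $p_{ij}$ and $t_k$ for $i,j,k>3$, the left-hand motions by $p_{23}$, $t_2$, $t_3$ and $p_{12}\,p_{13}\,t_1$, and the crossings by $p_{1k}\,p_{2k}\,p_{3k}$. The step you flag as the main obstacle (that the first segment can only move via the combined generator $p_{12}\,p_{13}\,t_1$ with framing $t_1$) is in fact asserted without further justification in the paper as well, so your account is, if anything, more careful than the original.
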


Hence the $R_1$ relations can be chosen as follows.
\begin{align}
  x_{12}\,x_{13}\,\,p_{23}\,\,(x_{12}\,x_{13})^{-1} 
        &= p_{23}
                          \label{eq5} \tag{5} \\
  x_{12}\,x_{13}\,\,p_{ij}\,\,(x_{12}\,x_{13})^{-1}
        &= p_{ij}                 \qquad \for i,j > 3 
                          \label{eq6} \tag{6} \\
  x_{12}\,x_{13}\,\,t_k\,\,(x_{12}\,x_{13})^{-1}
        &= t_k                    \qquad \for k > 1
                          \label{eq7} \tag{7} \\
  x_{12}\,x_{13}\,\,p_{12}\,p_{13}\,t_1\,\,(x_{12}\,x_{13})^{-1} 
        &= p_{12}\,p_{13}\,t_1 
                          \label{eq8} \tag{8} \\
  x_{12}\,x_{13}\,\,p_{1k}\,p_{2k}\,p_{3k}\,\,(x_{12}\,x_{13})^{-1}
        &= p_{1k}\,p_{2k}\,p_{3k} \qquad \for k > 3
                          \label{eq9} \tag{9}
\end{align}

Relation \eqref{eq5} follows from \eqref{C2},
relation \eqref{eq6} follows from two applications
of \eqref{C1}, relation \eqref{eq7}
follows from two applications of \eqref{C-xt}.
Relation \eqref{eq8} follows from the following.
\begin{longtable}{R@{$\ =\ $}LC} \deductionTableWidth
  \multicolumn{2}{L}{\qquad\qquad x_{12}\,x_{13}\,\underline{p_{12}\,p_{13}}\,t_1} 
                                                    & \eqref{C2} \\
      & x_{12}\,x_{13}\,p_{13}\,p_{23}\,p_{12}\,p_{23}^{-1}\,\underline{t_1} 
                                                    & \eqref{C-pt}^3 \\
      & x_{12}\,\underline{x_{13}\,p_{13}\,t_1}\,p_{23}\,p_{12}\,p_{23}^{-1} 
                                                    & \eqref{M-x} \\
      & x_{12}\,p_{13}\,t_1\,\underline{x_{13}\,p_{23}\,p_{12}}\,p_{23}^{-1}
                                                    & \eqref{C2}   \\
      & x_{12}\,p_{13}\,\underline{t_1}\,p_{23}\,p_{12}\,x_{13}\,p_{23}^{-1}
                                                    & \eqref{C-pt}^2 \\
      & \underline{x_{12}\,p_{13}\,p_{23}}\,p_{12}\,t_1\,x_{13}\,p_{23}^{-1}
                                                    & \eqref{C2}   \\
      & p_{13}\,p_{23}\,\underline{x_{12}\,p_{12}\,t_1}\,x_{13}\,p_{23}^{-1}
                                                    & \eqref{M-x} \\
      & p_{13}\,p_{23}\,p_{12}\,t_1\,\underline{x_{12}\,x_{13}\,p_{23}^{-1}}
                                                    & \eqref{C2}   \\
      & p_{13}\,p_{23}\,p_{12}\,\underline{t_1\,p_{23}^{-1}}\,x_{12}\,x_{13}
                                                    & \eqref{C-pt} \\
      & \underline{p_{13}\,p_{23}\,p_{12}\,p_{23}^{-1}}\,t_1\,x_{12}\,x_{13}
                                                    & \eqref{C2}   \\
      & p_{12}\,p_{13}\,t_1\,x_{12}\,x_{13}
\end{longtable}
Finally \eqref{eq9} follows from the following.
\begin{longtable}{R@{$\ =\ $}LC} \deductionTableWidth
  \multicolumn{2}{L}{\qquad\qquad \underline{x_{13}\,p_{1k}}\,p_{2k}\,p_{3k}} 
                                                     & \eqref{C2}  \\
      & p_{1k}\,p_{3k}\,x_{13}\,\underline{p_{3k}^{-1}\,p_{2k}\,p_{3k}}
                                                     & \eqref{C2}  \\
      & p_{1k}\,p_{3k}\,\underline{x_{13}\,p_{23}\,p_{2k}\,p_{23}^{-1}}
                                                     & \eqref{C3}  \\
      & p_{1k}\,\underline{p_{3k}\,p_{23}\,p_{2k}\,p_{23}^{-1}}\,x_{13}
                                                     & \eqref{C2}  \\
      & p_{1k}\,p_{2k}\,p_{3k}\,x_{13}
\end{longtable}
\begin{longtable}{R@{$\ =\ $}LC} \deductionTableWidth
  \multicolumn{2}{L}{\qquad\qquad \underline{x_{12}\,p_{1k}\,p_{2k}}\,p_{3k}} 
                                                     & \eqref{C2}  \\
      & p_{1k}\,p_{2k}\,\underline{x_{12}\,p_{3k}}   & \eqref{C1}  \\
      & p_{1k}\,p_{2k}\,p_{3k}\,x_{12}
\end{longtable}

Now consider the edge orbit representative $(v_0, v_0 \cdot
r_{\lambda})$ for $r_\lambda \neq x_{12}$ or $x_{12}\,x_{13}$.
There exists some $g \in \FramedBraid{n}$ such that $(v_0, v_0
\cdot r_1) \cdot g = (v_0, v_0\cdot r_\lambda)$, where 
$r_1 = x_{12}$ or $x_{12}\,x_{13}$.  By property 
\eqref{A} of $\Phi$
\[
   \Phi_{g^{-1}}(r_1) =_{\Braid{2n}} g^{-1} r_1 g
\]
and by property \eqref{C} there exists words 
$h_1, h_2 \in \FramedPureBraid{n}$ and some $r_{\lambda'}$
such that
\begin{equation} \label{R1-1}
  \Phi_{g^{-1}}(r_1) =_R h_1\, r_{\lambda'}\, h_2.
\end{equation}
Combining these we see that $v_0 \cdot r_1\,g = 
v_0\cdot r_{\lambda'}\,h_2$ and hence that $\lambda = \lambda'$
and $h_2 \in \stab(v_0, v_0\cdot r_\lambda)$.

Let $T$ be the choice of generators for $\stab(v_0, v_0\cdot r_1)$
chosen above.  So for all $t \in T$ there exists $h \in
\FramedPureBraid{n}$ such that
\[
   r_1\, t\, r_1^{-1} =_R h.
\]
So by property \eqref{D} we have
\begin{equation}\label{R1-2}
  \Phi_{g^{-1}}(r_1\,t\,r_1^{-1}) =_R \Phi_{g^{-1}}(h).
\end{equation}
Property \eqref{B} implies that $\Phi_{g^{-1}}(t)
\in \FramedPureBraid{n}$ and $\Phi_{g^{-1}}(h) \in \FramedPureBraid{n}$.
Combining \eqref{R1-1} and \eqref{R1-2}
we get
\[
   h_1\, r_\lambda\, h_2\,\Phi_{g^{-1}}(t)\,
                        h_2^{-1}\,r_\lambda^{-1}\,h_1^{-1}
      =_R \Phi_{g^{-1}}(h) 
\]
and so $h_2\,\Phi_{g^{-1}}(t)\,h_2^{-1}\in\stab(v_0,v_0\cdot r_\lambda)$.

\begin{claim}
  The set $\{h_2\,\Phi_{g^{-1}}(t)\,h_2^{-1}\st t \in T\}$
  generates $\stab(v_0,v_0\cdot r_\lambda)$.
\end{claim}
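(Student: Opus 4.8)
The plan is to reduce the claim to two facts: that conjugation by $g^{-1}$ carries the edge stabiliser $\stab(v_0, v_0\cdot r_1)$ isomorphically onto $\stab(v_0, v_0\cdot r_\lambda)$, and that conjugation by $h_2$ is an inner automorphism of the latter. Since both operations send a generating set to a generating set, composing them turns the chosen generating set $T$ into the one asserted in the claim.

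First I would establish the isomorphism directly from the right action on $\X{n}$. Recall that $g\in\FramedBraid{n}$ fixes $v_0$ and that $v_0\cdot r_1\,g = v_0\cdot r_\lambda$. Given $s\in\stab(v_0, v_0\cdot r_1)$, the computation $v_0\cdot(g^{-1} s g) = v_0\cdot s\,g = v_0$ (using $v_0\cdot g^{-1}=v_0$ and $v_0\cdot s = v_0$), together with $(v_0\cdot r_\lambda)\cdot(g^{-1}sg) = (v_0\cdot r_1)\cdot s\,g = v_0\cdot r_1\,g = v_0\cdot r_\lambda$ (using $v_0\cdot r_1\cdot s = v_0\cdot r_1$), shows $g^{-1}sg\in\stab(v_0, v_0\cdot r_\lambda)$. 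Because conjugation by $\FramedBraid{n}$ preserves $\PureHilden{2n}$, this is a homomorphism $\stab(v_0, v_0\cdot r_1)\to\stab(v_0, v_0\cdot r_\lambda)$; running the identical computation with $g$ in place of $g^{-1}$ produces a two-sided inverse, so it is an isomorphism. In particular $\{g^{-1} t g : t\in T\}$ generates $\stab(v_0, v_0\cdot r_\lambda)$.

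Next I would pass from the genuine conjugate $g^{-1}tg$ to the symbol $\Phi_{g^{-1}}(t)$ via property (A), which gives $\Phi_{g^{-1}}(t) =_{\Braid{2n}} g^{-1} t g$. As both sides are honest elements of $\PureHilden{2n}\subseteq\Braid{2n}$, this equality lets me substitute $\Phi_{g^{-1}}(t)$ for $g^{-1}tg$, so that $\{\Phi_{g^{-1}}(t):t\in T\}$ also generates $\stab(v_0, v_0\cdot r_\lambda)$. Finally, since $h_2\in\stab(v_0, v_0\cdot r_\lambda)$ was established just above, conjugation by $h_2$ is an inner automorphism of that stabiliser and hence sends one generating set to another, yielding the claim.

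The two action computations are routine; the point that needs care is the translation step, where property (A) supplies only an equality in $\Braid{2n}$ rather than an identity of free words. This is enough precisely because the edge stabilisers sit inside $\Braid{2n}$, so an equality of braids transports the generating property intact. The conjugation by $h_2$, which is what forces the resulting $R_1$ relation to appear in the exact shape produced by $\Phi$, is then harmless because $h_2$ already lies in the target stabiliser.
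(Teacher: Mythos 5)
Your proof is correct and takes essentially the same route as the paper: the paper likewise strips off the harmless conjugation by $h_2 \in \stab(v_0, v_0\cdot r_\lambda)$ and then verifies, by the very action computation you describe, that $\Phi_g(s) = g\,s\,g^{-1}$ lies in $\stab(v_0, v_0\cdot r_1)$ for every $s \in \stab(v_0, v_0\cdot r_\lambda)$, which is exactly the statement that conjugation identifies the two edge stabilisers. Your explicit remark that property (A) only supplies an equality in $\Braid{2n}$, and that this suffices because generation is a statement about group elements, is a point the paper leaves implicit but is handled correctly here.
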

\begin{proof}
  As $h_2 \in \stab(v_0,v_0\cdot r_\lambda)$ the set 
  $\{h_2\,\Phi_{g^{-1}}(t)\,h_2^{-1}\st t \in T\}$ generates
  $\stab(v_0,v_0\cdot r_\lambda)$ if and only if the set
  $\{\Phi_{g^{-1}}(t)\st t \in T\}$ generates
  $\stab(v_0,v_0\cdot r_\lambda)$.  This is equivalent to
  saying that for any $s \in \stab(v_0,v_0\cdot r_\lambda)$
  we can find $t_1, \ldots, t_k \in T$ such that
  $s = \Phi_{g^{-1}}(t_1 \cdots t_k)$, in other words that
  $\Phi_g(s) \in \stab(v_0, v_0\cdot r_1)$.
  Now
  \begin{align*}
    (v_0\cdot r_1) \cdot \Phi_g(s) 
        &= v_0\cdot r_1\, g\, s\, g^{-1}  \\
        &= v_0\cdot r_\lambda\, s\, g^{-1}  \\
        &= v_0\cdot r_\lambda \, g^{-1}  \\
        &= v_0\cdot r_1
  \end{align*}
  Therefore the claim holds.
\end{proof}

So for our $R_1$ relation we can choose the following
\[
   r_\lambda\,\,h_2\,\Phi_{g^{-1}}(t)\,h_2^{-1}\,\,r_\lambda^{-1}
          = h_1^{-1}\,\Phi_{g^{-1}}(h)\,h_1
\]
and hence we can choose our $R_1$ relations so that they all
follow from $R$.

\section{The $R_2$ relations} \label{R2}

The $R_2$ relations consist of a relation of the form
$r_{\lambda'}\,h\,r_\lambda = h'$ for each edge orbit
representative, where the LHS is an h-product for the path
$(v_0, v_0\cdot r_\lambda, v_0)$ and $h' \in \FramedBraid{n}$.
For each edge $(v_0, v_0\cdot r_\lambda)$ the edge
$(v_0, v_0\cdot r_\lambda^{-1})$ is in a different orbit.
Our choice of $r_\lambda$ mean that for all $\lambda$ there
exists $\lambda'$ such that $r_\lambda^{-1} = r_{\lambda'}$.
This means that for all the $R_2$ relations we can choose
$r_\lambda^{-1}\, r_\lambda = 1$, i.e.\ they are all trivial.

\section{The $R_3$ relations} \label{R3}

The $R_3$ relations consist of a relation of the form
$r_{\lambda_k}h_k\,\cdots\,r_{\lambda_1}h_1 = h$ for
each face orbit representative, where the LHS is an h-product
that represents the boundary of the face and $h\in\FramedPureBraid{n}$.
As with the $R_1$ relations, we will calculate the relations
for some specific orbits first then use $\Phi$ for the general
case.

There are three types of faces, triangular, non-nested rectangular and
nested rectangular.  Each triangular face orbit is uniquely determined
by $i<j$ and $k$ where $a_i$ and $a_j$ lie under the edges of the
triangle and the changing discs cut out the $k$th arc.  

Each non-nested rectangular face orbit is uniquely determined by four
parameters $i,j$ and $k<l$ where the changing discs cut out the arcs
$a_k$ and $a_l$, $a_i$ is the unique arc lying under the discs that cut
out $a_k$ and $a_j$ is the unique arc lying under the discs that cut
out $a_l$. 

Each nested rectangular face orbit is uniquely determined by three
parameters $i,j,k$ where the changing discs cut out $a_i$ and $a_j$,
$a_k$ is the unique discs lying under the discs cutting out $a_j$, and
$a_j$ and $a_k$ lie under the discs cutting out $a_i$.

We will start with the triangular face $(v_0, v_0 \cdot x_{12}\,x_{13},
v_0\cdot x_{12}, v_0)$.  An h-product for this path is
$x_{13}^{-1}\ x_{12}^{-1}\ (x_{12}\,x_{13})$.  So the $R_3$
relations is
\[
   x_{13}^{-1}\,\,x_{12}^{-1}\,\,(x_{12}\,x_{13}) = 1
\]
and so it is trivial.

Next consider the non-nested rectangular face
\[(v_0,\ v_0 \cdot x_{12},\ v_0 \cdot x_{34}\, x_{12},\ v_0 \cdot x_{34},\ v_0).\]
An h-product that represents this path is
$x_{34}^{-1}\,\,x_{12}^{-1}\,\,x_{34}\,\,x_{12}$.  So the
$R_3$ relations is
\[
   x_{34}^{-1}\,\,x_{12}^{-1}\,\,x_{34}\,\,x_{12} = 1
\]
which follows from \eqref{C1}.

Now consider the nested rectangular face
\[(v_0,\ v_0\cdot x_{23},\ v_0\cdot x_{12}\,x_{13}\,x_{23},\ v_0\cdot x_{12}\,x_{13},\ v_0).\]
An h-product that represents this path is
\[(x_{12}\,x_{13})^{-1}x_{23}^{-1}\,(x_{12}\,x_{13})\,x_{23}.\]
So the $R_3$ relations is
\[
   (x_{12}\,x_{13})^{-1}\,\,x_{23}^{-1}\,(x_{12}\,x_{13})\,x_{23} = 1
\]
which follows from \eqref{C2}.

Given any other face orbit representative $(v_0 = u_0, u_1, \ldots, u_k=v_0)$
there exists some $g \in \FramedBraid{n}$ such that
\[
   (u_0, u_1, \ldots, u_k) = (v_0, v_1, \ldots, v_k) \cdot g
\]
where $(v_0, v_1, \ldots, v_k)$ is the boundary of one of the
three faces whose $R_3$ relations we calculated above.
Suppose the relation from $(v_0, v_1, \ldots, v_k)$ is the following.
\[
   r_{\lambda_k} h_k \cdots r_{\lambda_1} h_1 = h
\]
By property \eqref{C}, for each $r_{\lambda_i}$
there exists $h_{i1}, h_{i2} \in \FramedPureBraid{n}$ and
$r_{\lambda'_i}$ such that
\[
   \Phi_{g^{-1}}(r_{\lambda_i}) =_R h_{i1}\, r_{\lambda'_i}\, h_{i2}
\]

\begin{claim}
  The following h-product represents the path $(u_0, u_1, \ldots, u_k)$.
  \[
     r_{\lambda'_k}\, h_{k2}\, \Phi_{g^{-1}}(h_k)\, h_{(k-1)1}\,
            \cdots\, r_{\lambda'_1}\, h_{11}\, \Phi_{g^{-1}}(h_1)
  \]
\end{claim}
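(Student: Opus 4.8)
The plan is to verify two things: first, that the displayed word is genuinely an h-product (each inter-edge block lies in $H = \FramedPureBraid{n}$); and second, that the sequence of vertices it visits is exactly $(u_0, u_1, \ldots, u_k)$. For the first point, the block occurring between the edge letters $r_{\lambda'_i}$ and $r_{\lambda'_{i-1}}$ has the form $h_{i2}\,\Phi_{g^{-1}}(h_i)\,h_{(i-1)1}$. Here $h_{i2}, h_{(i-1)1} \in \FramedPureBraid{n}$ by property \eqref{C}, while $h_i \in \FramedPureBraid{n}$ (it is an $H$-factor of the h-product for the model face), so $\Phi_{g^{-1}}(h_i) \in \FramedPureBraid{n}$ by property \eqref{B}. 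Each $r_{\lambda'_i}$ is an edge orbit representative, again by \eqref{C}, so the word is a legitimate h-product.

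For the second point, I would compute the vertex reached by the rightmost $j$ blocks. Write $P_j$ for the product of the last $j$ blocks; the associated vertex is $w_j = v_0 \cdot P_j$, and the goal is $w_j = u_j = v_j \cdot g$. The key is a telescoping simplification. From property \eqref{C}, in $\PureHilden{2n}$ we have $r_{\lambda'_i} = h_{i1}^{-1}\,\Phi_{g^{-1}}(r_{\lambda_i})\,h_{i2}^{-1}$, so each block satisfies $r_{\lambda'_i}\,h_{i2}\,\Phi_{g^{-1}}(h_i)\,h_{(i-1)1} = h_{i1}^{-1}\,\Phi_{g^{-1}}(r_{\lambda_i}\,h_i)\,h_{(i-1)1}$, using that $\Phi_{g^{-1}}$ is realised by conjugation (property \eqref{A}) and hence respects products. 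When the blocks are concatenated, the trailing $h_{(i-1)1}$ of block $i$ cancels the leading $h_{(i-1)1}^{-1}$ of block $i-1$ (with $h_{01}$ read as the empty word in the rightmost block), leaving
\[
  P_j = h_{j1}^{-1}\,\Phi_{g^{-1}}\bigl(r_{\lambda_j}\,h_j\,\cdots\,r_{\lambda_1}\,h_1\bigr).
\]

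Finally I would pass to the vertex action. Using property \eqref{A}, $\Phi_{g^{-1}}(r_{\lambda_j}h_j\cdots r_{\lambda_1}h_1) =_{\Braid{2n}} g^{-1}\,(r_{\lambda_j}h_j\cdots r_{\lambda_1}h_1)\,g$, so $w_j = v_0 \cdot h_{j1}^{-1}\,g^{-1}\,(r_{\lambda_j}h_j\cdots r_{\lambda_1}h_1)\,g$. The decisive observation is that the model path and the general face orbit representative both begin at the basepoint, so $u_0 = v_0 = v_0 \cdot g$, which forces $g \in \stab(v_0)$. Combined with $h_{j1} \in \FramedPureBraid{n} = \stab(v_0)$, this collapses the prefix $h_{j1}^{-1}g^{-1}$ against the basepoint, giving $w_j = v_0 \cdot (r_{\lambda_j}h_j\cdots r_{\lambda_1}h_1)\,g = v_j \cdot g = u_j$. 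The case $j=0$ is immediate and $j=k$ recovers $w_k = v_0$. I expect the only delicate points to be the bookkeeping of the telescoping cancellation (especially the rightmost block, where $h_{01}$ is empty) and recognising that $g$ fixes $v_0$; once the latter is secured, the rest reduces to the homomorphism property of conjugation supplied by \eqref{A}, together with the fact that both $=_R$ and $=_{\Braid{2n}}$ descend to genuine equalities in $\PureHilden{2n}$ and hence preserve the vertex action.
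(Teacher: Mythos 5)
Your proof is correct and follows essentially the same route as the paper's: expand each block via property \eqref{C}, telescope so that the suffix of the h-product beginning at $r_{\lambda'_i}$ equals $h_{i1}^{-1}\,\Phi_{g^{-1}}(r_{\lambda_i}\,h_i\cdots r_{\lambda_1}\,h_1)$, then apply property \eqref{A} and the fact that $g$ and the $h_{i1}$ stabilise $v_0$ to conclude that the $i$th vertex is $v_i\cdot g = u_i$. The extra bookkeeping you supply (checking the blocks lie in $\FramedPureBraid{n}$, and deriving $g\in\stab(v_0)$ from $u_0 = v_0$) is implicit in the paper's four-line computation, so there is no substantive difference.
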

\begin{proof}
  The $i$th vertex of the path associated to the h-product is given as follows.
  \[ \begin{array}{rl}
    \multicolumn{2}{l}{v_0 \cdot r_{\lambda'_i}\, h_{i2}\, \Phi_{g^{-1}}(h_i)\, h_{(i-1)1}\,
            \cdots\, r_{\lambda'_1}\, h_{11}\, \Phi_{g^{-1}}(h_1)}          \\
    = & v_0 \cdot \Phi_{g^{-1}}(r_{\lambda_i}\, h_i \cdots r_{\lambda_1}\, h_1) \\
    = & v_0 \cdot r_{\lambda_i}\, h_i \cdots r_{\lambda_1}\, h_1\, g \\
    = & v_i \cdot g \\
    = & u_i
  \end{array} \]
\end{proof}

Therefore for our $R_3$ relation we may choose the following
\[
   r_{\lambda'_k}\, h_{k2}\, \Phi_{g^{-1}}(h_k)\, h_{(k-1)1}\,
            \cdots\, r_{\lambda'_1}\, h_{11}\, \Phi_{g^{-1}}(h_1)
      = h_{k1}^{-1}\, \Phi_{g^{-1}}(h)
\]
which follows from $R$ by property \eqref{D}.

\section{Construction and properties of $\Phi$} \label{definePhi}

All that remains to prove Theorem~\ref{main-theorem}
is to construct $\Phi$ and show that it satisfies properties
\eqref{A}--\eqref{D}.

Define $\Phi$, the action of $F\langle \sigma_i,\ \tau_j \rangle$
on $F\langle\, p_{ij},\ x_{ij},\ y_{ij},\ t_k\rangle$, as follows.
For $\alpha \in \{p, x, y\}$
\begin{longtable}{L@{\ =\ }LL}
  \Phi_{\sigma_i}\:\!\!(\alpha_{kl})       & \alpha_{kl}
                         & \for i \ne k-1, k, l-1, l         \\
  \Phi_{\sigma_i}\:\!\!(\alpha_{ij})       & \alpha_{i+1,j} 
                         & \for i+1 < j                      \\
  \Phi_{\sigma_i}\:\!\!(\alpha_{i+1,j}) & p_{i,i+1}\,\alpha_{ij}\,p_{i,i+1}^{-1}
                         & \for i+1 < j                      \\
  \Phi_{\sigma_{j}}\:\!\!(\alpha_{i,j+1})   & p_{j,j+1}\,\alpha_{ij}\,p_{j,j+1}^{-1} 
                         & \for i+1 < j                      \\
  \Phi_{\sigma_j}\:\!\!(\alpha_{ij})       & \alpha_{i,j+1} 
                         & \for i+1 < j                      \\[0.5em]
  \Phi_{\sigma_i}\:\!\!(p_{i,i+1}) & p_{i,i+1}                     \\
  \Phi_{\sigma_i}\:\!\!(x_{i,i+1}) & t_{i+1}^{-1}\,y_{i,i+1}\,t_{i+1} \\
  \Phi_{\sigma_i}\:\!\!(y_{i,i+1}) & x_{i,i+1}                     \\[0.5em]
  \Phi_{\sigma_i}\:\!\!(t_j)          & \begin{cases}
                                    t_j     & \text{if $j \ne i$, $i+1$} \\
                                    t_{j+1} & \text{if $j = i$}          \\
                                    t_{i} & \text{if $j = i+1$}   
                                  \end{cases} \\[2em]
  \Phi_{\tau_i}\:\!\!(p_{kl})  & p_{kl}          \\[0.5em]
  \Phi_{\tau_i}\:\!\!(x_{kl})  & \begin{cases}
                             x_{kl}              & \text{if $i \ne k$} \\
                             x_{kl}^{-1}\,p_{kl} & \text{if $i = k$} 
                           \end{cases} & \for k < l \\[1em]
  \Phi_{\tau_i}\:\!\!(y_{kl}) & \begin{cases}
                             y_{kl}              & \text{if $i \ne l$} \\
                             y_{kl}^{-1}\,p_{kl} & \text{if $i = l$}
                           \end{cases} & \for k < l \\[1em]
  \Phi_{\tau_i}\:\!\!(t_j)    & t_j  
\end{longtable}

\begin{proposition} \label{Phi-inverse}
  The map $\Phi$ is a well defined action of
  $F\langle\, \tau_i,\,\sigma_i\, \rangle$ on 
  $F\langle\, p_{ij},\,t_i,\,x_{ij},\,y_{ij}\rangle$.
\end{proposition}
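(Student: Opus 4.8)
The plan is to exploit that the domain $F\langle \sigma_i,\,\tau_j\rangle$ is a \emph{free} group. A homomorphism out of a free group is determined by, and entirely unconstrained in, the images of the free generators. Hence to produce a homomorphism $F\langle \sigma_i,\,\tau_j\rangle \to \aut(F\langle p_{ij},\,t_i,\,x_{ij},\,y_{ij}\rangle)$ it suffices to check that each $\Phi_{\sigma_i}$ and each $\Phi_{\tau_i}$ is a well-defined automorphism of the target group; there are simply no relations in the source to verify. Moreover, since the target is itself free on the letters $p_{ij},\,x_{ij},\,y_{ij},\,t_k$, prescribing the image of each letter (as the definition of $\Phi$ does) already specifies a well-defined endomorphism, with nothing to check. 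The entire content of the proposition is therefore that each of these endomorphisms is \emph{invertible}.

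To prove invertibility I would exhibit, for each generator $\sigma_i$ and $\tau_i$, an explicit two-sided inverse endomorphism and verify that both composites fix every generator of $F\langle p_{ij},\,t_i,\,x_{ij},\,y_{ij}\rangle$. For $\Phi_{\tau_i}$ this is immediate: the candidate inverse fixes every $p_{kl}$ and $t_j$, fixes $x_{kl}$ and $y_{kl}$ except that $x_{kl}\mapsto p_{kl}\,x_{kl}^{-1}$ when $i=k$ and $y_{kl}\mapsto p_{kl}\,y_{kl}^{-1}$ when $i=l$. A one-line computation then gives $\Phi_{\tau_i}(p_{kl}\,x_{kl}^{-1}) = p_{kl}\,(x_{kl}^{-1}p_{kl})^{-1} = x_{kl}$, and symmetrically in the other order, so $\Phi_{\tau_i}$ is an automorphism.

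For $\Phi_{\sigma_i}$ the inverse is built from the same data read backwards: it interchanges the labels $i \leftrightarrow i+1$ (so in particular $t_i \leftrightarrow t_{i+1}$), conjugates by $p_{i,i+1}^{-1}$ wherever the forward map conjugated by $p_{i,i+1}$, fixes $p_{i,i+1}$, and on the adjacent pair sends $x_{i,i+1}\mapsto y_{i,i+1}$ and $y_{i,i+1}\mapsto t_i\,x_{i,i+1}\,t_i^{-1}$. These last two invert the forward rules $y_{i,i+1}\mapsto x_{i,i+1}$ and $x_{i,i+1}\mapsto t_{i+1}^{-1}y_{i,i+1}t_{i+1}$, using $\Phi_{\sigma_i}(t_{i+1})=t_i$. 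On a ``far'' generator one then checks, for example, that $\Phi_{\sigma_i}(p_{i,i+1}^{-1}\alpha_{i+1,j}\,p_{i,i+1}) = \alpha_{i,j}$, which undoes the forward pair of rules $\alpha_{i,j}\mapsto\alpha_{i+1,j}$ and $\alpha_{i+1,j}\mapsto p_{i,i+1}\alpha_{i,j}p_{i,i+1}^{-1}$. I would run this verification generator-family by generator-family for both composites.

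The main obstacle is the bookkeeping for $\Phi_{\sigma_i}$. One must first confirm that the defining case list is exhaustive, assigning an image to every generator family carrying the label $i$ or $i+1$, and that it is consistent with the symmetry $\alpha_{kl}=\alpha_{lk}$; one must then check that the relabelling, the conjugations by $p_{i,i+1}$, and the $t_{i+1}$-conjugation appearing in $\Phi_{\sigma_i}(x_{i,i+1})$ all cancel correctly in each composite. The $\tau$ computations and the pure $t$-label computations are routine; the genuine care lies in the $\sigma_i$ cases involving $x$ and $y$, where the conjugating factors and the interchange with $t$ must be tracked precisely to confirm that the two composites reduce to the identity on each generator.
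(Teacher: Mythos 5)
Your proposal is correct and follows essentially the same route as the paper: both reduce the claim to the invertibility of each generator's image (freeness of source and target making well-definedness automatic) and then exhibit the identical explicit inverse formulas for $\Phi_{\sigma_i^{-1}}$ and $\Phi_{\tau_i^{-1}}$. The only difference is that you propose to verify the two composites generator by generator, whereas the paper simply records the inverses and leaves that check to the reader.
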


\begin{proof}
  All that needs to be checked is that $\Phi_{\sigma_i}$ and 
  $\Phi_{\tau_i}$ are invertible.  The inverses are as follows.
  \begin{longtable}{L@{\ =\ }LL}
    \Phi_{\sigma_i^{-1}}\:\!\!(\alpha_{kl})       & \alpha_{kl}
                           & \for i \ne k-1, k, l-1, l         \\
    \Phi_{\sigma_i^{-1}}\:\!\!(\alpha_{ij})       & p_{i,i+1}^{-1}\,\alpha_{i+1,j}\,p_{i,i+1}
                           & \for i+1 < j                      \\
    \Phi_{\sigma_i^{-1}}\:\!\!(\alpha_{i+1,j}) & \alpha_{ij}
                           & \for i+1 < j                      \\
    \Phi_{\sigma_j^{-1}}\:\!\!(\alpha_{i,j+1})   & \alpha_{ij} 
                           & \for i+1 < j                      \\
    \Phi_{\sigma_j^{-1}}\:\!\!(\alpha_{ij})       & p_{j,j+1}^{-1}\,\alpha_{i,j+1}\,p_{j,j+1}
                           & \for i+1 < j                      \\[0.5em]
    \Phi_{\sigma_i^{-1}}\:\!\!(p_{i,i+1}) & p_{i,i+1}                     \\
    \Phi_{\sigma_i^{-1}}\:\!\!(x_{i,i+1}) & y_{i,i+1}                  \\
    \Phi_{\sigma_i^{-1}}\:\!\!(y_{i,i+1}) & t_i\,x_{i,i+1}\,t_i^{-1}\\[0.5em]
    \Phi_{\sigma_i^{-1}}\:\!\!(t_j)          & \begin{cases}
                                      t_j     & \text{if $j \ne i$, $i+1$} \\
                                      t_{j+1} & \text{if $j = i$}          \\
                                      t_{j-1} & \text{if $j = i+1$}   
                                    \end{cases} \\[2em]
    \Phi_{\tau_i^{-1}}\:\!\!(p_{kl})  & p_{kl}          \\[0.5em]
    \Phi_{\tau_i^{-1}}\:\!\!(x_{kl})  & \begin{cases}
                                    x_{kl}              & \text{if $i \ne k$} \\
                                    p_{kl}\,x_{kl}^{-1} & \text{if $i = k$} 
                                  \end{cases} & \for k < l \\[1em]
    \Phi_{\tau_i^{-1}}\:\!\!(y_{kl}) & \begin{cases}
                               y_{kl}              & \text{if $i \ne l$} \\
                               p_{kl}\,y_{kl}^{-1} & \text{if $i = l$}
                             \end{cases} & \for k < l \\[1em]
    \Phi_{\tau_i^{-1}}\:\!\!(t_j)    & t_j  
  \end{longtable}
\end{proof}

We will need the following lemma.
\begin{lemma}\label{inverse-squared}
  For $x \in F\langle\, p_{ij},\,t_i,\,x_{ij},\,y_{ij}\rangle$
  we have
  \begin{align*}
    \Phi_{\sigma_{m}^{-2}} & =_R p_{m,m+1}^{-1}\, x\, p_{m,m+1} \\
    \Phi_{\tau_{m}^{-2}}   & =_R t_m^{-1}\, x\, t_m 
  \end{align*} \qed
\end{lemma}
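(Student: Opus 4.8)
The plan is to reduce each identity to a finite check on the free generators and then verify, generator by generator, that the two sides agree modulo $R$. First I would observe that $\Phi_{\sigma_m^{-2}}$ is an automorphism of the free group $F\langle p_{ij},\,t_i,\,x_{ij},\,y_{ij}\rangle$ by Proposition~\ref{Phi-inverse}, so the assignment $x \mapsto \Phi_{\sigma_m^{-2}}(x)$ followed by the quotient onto $\langle S \mid R\rangle$ is a homomorphism; likewise $x \mapsto p_{m,m+1}^{-1}\,x\,p_{m,m+1}$ composed with the same quotient is a homomorphism. Two homomorphisms out of a free group coincide as soon as they agree on the free generators, so it suffices to prove the identity when $x$ is a single element of $S$. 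The same reduction applies verbatim to the $\tau_m$ statement with $p_{m,m+1}$ replaced by $t_m$. This step is purely formal; in particular it does not invoke properties \eqref{A}--\eqref{D}, so the lemma is a direct consequence of the explicit formulas defining $\Phi$.

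For the $\sigma_m$ identity I would organise the generator check by the position of the indices of $x=\alpha_{kl}$ or $x=t_k$ relative to the consecutive pair $\{m,m+1\}$. When neither index meets $\{m,m+1\}$ the defining formula gives $\Phi_{\sigma_m^{\pm1}}(x)=x$, hence $\Phi_{\sigma_m^{-2}}(x)=x$; since $\{m,m+1\}$ is unlinked from every other pair, $p_{m,m+1}$ commutes with $x$ by \eqref{C1}, while it commutes with every $t_k$ by \eqref{C-pt}. When exactly one index lies in $\{m,m+1\}$, the defining formulas for the index-shifting cases already build in a conjugation by $p_{m,m+1}^{\pm1}$, so applying $\Phi_{\sigma_m^{-1}}$ twice and cancelling returns $p_{m,m+1}^{-1}\,x\,p_{m,m+1}$ on the nose, with no relation required.

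The genuinely substantive cases are the two ``diagonal'' generators $x=x_{m,m+1}$ and $x=y_{m,m+1}$. Using the inverse formulas of Proposition~\ref{Phi-inverse} one computes $\Phi_{\sigma_m^{-2}}(x_{m,m+1})=t_m\,x_{m,m+1}\,t_m^{-1}$ and $\Phi_{\sigma_m^{-2}}(y_{m,m+1})=t_{m+1}\,y_{m,m+1}\,t_{m+1}^{-1}$. Rearranging \eqref{M-x} with $i=m$, $j=m+1$ gives $p_{m,m+1}^{-1}\,x_{m,m+1}\,p_{m,m+1}=t_m\,x_{m,m+1}\,t_m^{-1}$, and rearranging \eqref{M-y} (using that $p_{m,m+1}$ commutes with $t_{m+1}$ by \eqref{C-pt}) gives $p_{m,m+1}^{-1}\,y_{m,m+1}\,p_{m,m+1}=t_{m+1}\,y_{m,m+1}\,t_{m+1}^{-1}$; these are exactly the required equalities. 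The remaining diagonal generator $p_{m,m+1}$ is fixed by $\Phi_{\sigma_m}$ and self-conjugation is trivial, while $t_m$ and $t_{m+1}$ are merely swapped by $\Phi_{\sigma_m^{-1}}$, hence fixed by $\Phi_{\sigma_m^{-2}}$ and central against $p_{m,m+1}$ by \eqref{C-pt}.

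For $\Phi_{\tau_m^{-2}}$ the check is shorter. The map $\Phi_{\tau_m}$ fixes every $p_{kl}$ and every $t_j$, and fixes $x_{kl}$ unless $m=k$ and $y_{kl}$ unless $m=l$; in all the fixed cases $\Phi_{\tau_m^{-2}}(x)=x$ and the needed commutation of $t_m$ with $x$ is supplied by \eqref{C-pt}, \eqref{C-tt}, \eqref{C-xt} or \eqref{C-yt}. The two remaining cases give, via the inverse formulas, $\Phi_{\tau_m^{-2}}(x_{m,l})=p_{m,l}\,x_{m,l}\,p_{m,l}^{-1}$ and $\Phi_{\tau_m^{-2}}(y_{k,m})=p_{k,m}\,y_{k,m}\,p_{k,m}^{-1}$, and using that $t_m$ commutes with $p_{m,l}$ (respectively $p_{k,m}$) by \eqref{C-pt} these reduce once more to \eqref{M-x} and \eqref{M-y}. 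The only real obstacle is the bookkeeping: assembling the several index sub-cases in the definition of $\Phi_{\sigma_m}$ and confirming that the index-shifting cases carry precisely the conjugation by $p_{m,m+1}$ demanded. Once that is in hand, all the non-trivial content is isolated in the four applications of \eqref{M-x} and \eqref{M-y}.
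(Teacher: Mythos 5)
Your proposal is correct, and in fact it supplies more than the paper does: the paper states Lemma~\ref{inverse-squared} with no proof at all (it is asserted with an immediate \qed, evidently regarded as a routine consequence of the formulas for $\Phi_{\sigma_i^{-1}}$ and $\Phi_{\tau_i^{-1}}$ in Proposition~\ref{Phi-inverse}). Your write-up is exactly the verification the author leaves implicit: the reduction to generators is legitimate because both $x \mapsto \Phi_{g}(x)$ and $x \mapsto p_{m,m+1}^{-1}\,x\,p_{m,m+1}$ (resp.\ $t_m^{-1}\,x\,t_m$) become homomorphisms into $\langle S \mid R\rangle$, and your generator-by-generator computations check out: the off-diagonal $\sigma_m$ cases cancel to the conjugate on the nose, the disjoint cases need only \eqref{C1}/\eqref{C-pt}, and the genuine content is isolated in $\Phi_{\sigma_m^{-2}}(x_{m,m+1}) = t_m x_{m,m+1} t_m^{-1}$, $\Phi_{\sigma_m^{-2}}(y_{m,m+1}) = t_{m+1} y_{m,m+1} t_{m+1}^{-1}$, $\Phi_{\tau_m^{-2}}(x_{ml}) = p_{ml} x_{ml} p_{ml}^{-1}$ and $\Phi_{\tau_m^{-2}}(y_{km}) = p_{km} y_{km} p_{km}^{-1}$, each of which reduces to \eqref{M-x} or \eqref{M-y} (plus \eqref{C-pt}) exactly as you say. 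Your observation that none of this invokes properties \eqref{A}--\eqref{D} is also worth having on record, since the lemma is later used to establish \eqref{C} and \eqref{D}, and circularity would otherwise be a concern. One caveat you inherit from the paper rather than introduce: the displayed rules defining $\Phi_{\sigma_m}$ (all carrying the condition $i+1<j$) do not literally assign a value to $\alpha_{m-1,m}$ or $\alpha_{m-1,m+1}$, so the "one index in $\{m,m+1\}$" bookkeeping you promise can only be completed under the evident intended reading (condition $i<j$ for the rules shifting the larger index); with that reading your cancellation argument goes through unchanged.
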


It is easy to check the $\Phi$ satisfies property \eqref{A},
i.e.\ that for every word $g \in F\langle \sigma_i,\,\tau_j \rangle$
and for each $x \in F\langle\, p_{ij},\,t_i,\,x_{ij},\,y_{ij}\rangle$
we have that $\Phi_g(x) = g\, x\, g^{-1}$ as braids.
It is also clear that $\Phi$ satisfies property \eqref{B}.
That is that for any word $g \in F\langle \sigma_i,\,\tau_j \rangle$
and for any word $h \in F\langle\, p_{ij},\,t_k\rangle$ we have 
$\Phi_g(h) \in F\langle\, p_{ij},\,t_k\rangle$.

\begin{proposition}
  The map $\Phi$ satisfies property \eqref{C}, i.e.\ for any word 
  $g \in F\langle \sigma_i,\,\tau_j \rangle$ and any
  \[ 
    r_\lambda \in\left\{ 
       \begin{array}{c|c}
         \begin{array}{ccc}
           x_{ij},    & x_{ij}^{-1} \\
           y_{ij},    & y_{ij}^{-1}
         \end{array} &  i < j 
       \end{array}
     \right\} \union \left\{
       \begin{array}{c|c}
         \begin{array}{cc}
           x_{ij}\,x_{ik}, & x_{ik}^{-1}\,x_{ij}^{-1} \\
           x_{jk}\,y_{ij}, & y_{ij}^{-1}\,x_{ik}^{-1} \\
           y_{ik}\,y_{jk}, & y_{jk}^{-1}\,y_{ik}^{-1}
         \end{array} & i < j <k
       \end{array}
     \right\}
  \]
  we have a relation $\Phi_g(r_\lambda) = h_1 r_{\lambda'} h_2$
  that can be deduced from the relations in $R$, for some $h_1, h_2
  \in F\langle\, p_{ij},\,t_k\rangle$ and some $r_{\lambda'}$.
\end{proposition}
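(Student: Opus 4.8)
The plan is to use that $\Phi$ is a group homomorphism $F\langle \sigma_i,\,\tau_j\rangle \to \aut(F\langle p_{ij},\,x_{ij},\,y_{ij},\,t_k\rangle)$, so that property~\eqref{C} for an arbitrary word $g$ reduces to the case where $g$ is a single generator $\sigma_i^{\pm1}$ or $\tau_j^{\pm1}$. First I would settle this base case by direct computation, and then propagate it along words by induction on the length of $g$.

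For the inductive step, write $g = s\,g'$ with $s$ a generator and $g'$ a strictly shorter word, so that $\Phi_g = \Phi_s\circ\Phi_{g'}$. The inductive hypothesis gives $\Phi_{g'}(r_\lambda) =_R h_1\,r_{\lambda'}\,h_2$ with $h_1,h_2\in F\langle p_{ij},\,t_k\rangle$ and $r_{\lambda'}$ a listed representative. Since $\Phi_s$ is a free-group automorphism it respects concatenation, and property~\eqref{D} lets me carry the relation $=_R$ across it, giving $\Phi_g(r_\lambda) =_R \Phi_s(h_1)\,\Phi_s(r_{\lambda'})\,\Phi_s(h_2)$. Property~\eqref{B} keeps $\Phi_s(h_1)$ and $\Phi_s(h_2)$ inside $F\langle p_{ij},\,t_k\rangle$, and the base case rewrites the middle factor as $\Phi_s(r_{\lambda'}) =_R h_1''\,r_{\lambda''}\,h_2''$; absorbing $\Phi_s(h_1)\,h_1''$ on the left and $h_2''\,\Phi_s(h_2)$ on the right then yields the required form. (Property~\eqref{D} is itself a finite check that each $\Phi_s$ carries every relator of $R$ to a word that is $=_R 1$, using Proposition~\ref{Phi-inverse} and Lemma~\ref{inverse-squared}, and I would establish it before this proposition so that the induction is not circular.)

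The base case is a verification directly from the defining formulas. For a length-one representative most images are already of the desired shape: by the defining formulas $\Phi_{\sigma_i}(\alpha_{kl})$ is either $\alpha_{kl}$, an index-shifted generator, or a $p_{i,i+1}$-conjugate of a generator, while the boundary values $\Phi_{\sigma_i}(x_{i,i+1}) = t_{i+1}^{-1}\,y_{i,i+1}\,t_{i+1}$ and $\Phi_{\sigma_i}(y_{i,i+1}) = x_{i,i+1}$ exhibit the form $h_1\,r_{\lambda'}\,h_2$ at once; the framing values $\Phi_{\tau_k}(x_{kl}) = x_{kl}^{-1}\,p_{kl}$ and $\Phi_{\tau_l}(y_{kl}) = y_{kl}^{-1}\,p_{kl}$ are of the form $r_{\lambda'}\,h_2$ with $r_{\lambda'} = x_{kl}^{-1}$ respectively $y_{kl}^{-1}$, again a listed representative. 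For a length-two representative I would expand $\Phi_s$ over its two factors and then use \eqref{C1}, \eqref{C2}, \eqref{C3} together with \eqref{M-x} and \eqref{M-y} (and, to shuffle framings, \eqref{C-xt} and \eqref{C-yt}) to push the pure-braid and framing factors to the two ends and recognise the surviving middle as one of the listed length-two representatives.

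The hard part will be precisely these length-two base cases under the $\sigma_i$ acting on boundary indices: there the two factors transform inhomogeneously---one may turn an $x_{i,i+1}$ into $t_{i+1}^{-1}\,y_{i,i+1}\,t_{i+1}$ while the other is merely shifted or conjugated by $p_{i,i+1}$---so the naive product is not visibly of the form $h_1\,r_{\lambda'}\,h_2$. Reorganising it requires a careful sequence of applications of \eqref{C2}, \eqref{C3}, \eqref{M-x} and \eqref{M-y}, of exactly the flavour already seen in the $R_1$ deductions for \eqref{eq8} and \eqref{eq9}, in order to confirm that the outer factors really land in $F\langle p_{ij},\,t_k\rangle$ and that the centre matches an admissible $r_{\lambda'}$. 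The bookkeeping across the three index regimes ($i<j<k$, $j<i<k$, $j<k<i$), and keeping track of which generator index is at the boundary, is what makes this the substantive content of the argument.
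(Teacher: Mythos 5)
Your proposal is correct and follows essentially the same route as the paper: reduce to a (monoid) generating set of $F\langle \sigma_i,\,\tau_j\rangle$ using properties \eqref{B} and \eqref{D}, read off the length-one representatives directly from the defining formulas, and grind through the length-two representatives case by case using \eqref{C1}, \eqref{C2}, \eqref{C3}, \eqref{M-x} and \eqref{M-y}. The only organisational difference is that instead of computing a separate base case for $\sigma_m^{-1}$ and $\tau_m^{-1}$, the paper uses the monoid generators $\sigma_m$, $\tau_m$, $\sigma_m^{-2}$, $\tau_m^{-2}$ and Lemma~\ref{inverse-squared} (so $\Phi_{\sigma_m^{-2}}$ is just conjugation by $p_{m,m+1}^{-1}$, which visibly preserves the form $h_1\,r_{\lambda'}\,h_2$), and it halves the length-two casework by deducing the statement for $r_\lambda^{-1}$ from that for $r_\lambda$ by taking inverses.
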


\begin{proof}
  First note that for each word $h$ in $F\langle\, p_{ij},\,t_k\rangle$,
  by property \eqref{B}, the map $\Phi_g$ takes
  $h$ to another word in $F\langle\, p_{ij},\,t_k\rangle$.
  Therefore it suffices to check $\Phi_g$ for $g = \tau_m$,
  $\sigma_m$, $\tau_m^{-2}$ and $\sigma_m^{-2}$.  By 
  Lemma~\ref{inverse-squared} property \eqref{C} is satisfied for
  $g = \tau_m^{-2}$ and $\sigma_m^{-2}$.

  For $r_\lambda = x_{ij}, x_{ij}^{-1}, y_{ij}, y_{ij}^{-1}$
  this follows immediately from the definition of $\Phi$ given
  above.

  Now consider $\Phi_{\sigma_m}(r_\lambda)$ for $r_\lambda =
  x_{ij}\,x_{ik}$, $x_{jk}\,y_{ij}$ or $y_{ik}\,y_{jk}$.  The
  only cases when $\Phi_{\sigma_m}(r_\lambda) \ne r_\lambda$
  are $m=i-1$, $m=i$ and $j=i+1$, $m=i$ and $j>i+1$, $m=j-1$
  and $i<j-1$, $m=j$ and $k=j+1$, $m=j$ and $k>j+1$, $m=k-1$
  and $j<k-1$, and $m=k$.
  \begin{longtable}{LR@{\ =\ }LC}
  %
      m=i-1
        & \Phi_{\sigma_{i-1}}(x_{ij}\,x_{ik}) 
          & p_{i-1,i}\ x_{i-1,j}\,x_{i-1,k}\ p_{i-1,i}^{-1} 
                                  &                         \\[0.7em]
        & \Phi_{\sigma_{i-1}}(x_{jk}\,y_{ij})
          & \underline{x_{jk}\,p_{i-1,i}}\,y_{i-1,j}\,p_{i-1,i}^{-1} 
                                  & \eqref{C1}   \\
        & & p_{i-1,i}\ x_{jk}\,y_{i-1,j}\ p_{i-1,i}^{-1}
                                  &                         \\[0.7em]
        & \Phi_{\sigma_{i-1}}(y_{ik}\,y_{jk})
          & p_{i-1,i}\,y_{i-1,k}\,\underline{p_{i-1,i}^{-1}\,y_{jk}}
                                  & \eqref{C1}   \\
        & & p_{i-1,i}\ y_{i-1,k}\,y_{jk}\ p_{i-1,i}^{-1}
                                  &                         \\[1em]
  %
  %
      m=i \text{ and } j=i+1
        & \Phi_{\sigma_{i}}(x_{ij}\,x_{ik}) 
          & t_j^{-1}\,y_{ij}\,\underline{t_j\,x_{jk}}
                                  & \eqref{M-x} \\
        & & t_j^{-1}\,\underline{y_{ij}\,p_{jk}^{-1}}\,x_{jk}\,p_{jk}t_j
                                  & \eqref{C2}   \\
        & & t_j^{-1}\,p_{jk}^{-1}\,\underline{p_{ik}^{-1}\,y_{ij}\,p_{ik}\,x_{jk}}\,p_{jk}\,t_j
                                  & \eqref{C2}   \\
        & & t_j^{-1}\,p_{jk}^{-1}\ x_{jk}\,y_{ij}\ p_{jk}\,t_j
                                  &                         \\[0.7em]
        & \Phi_{\sigma_{i}}(x_{jk}\,y_{ij})
          & \underline{p_{ij}\,x_{ik}\,p_{ij}^{-1}}\,x_{ij}
                                  & \eqref{C2}   \\
        & & p_{jk}^{-1}\,\underline{x_{ik}\,p_{jk}\,x_{ij}}
                                  & \eqref{C2}   \\
        & & p_{jk}^{-1}\ x_{ij}\,x_{ik}\ p_{jk}
                                  &                         \\[0.7em]
        & \Phi_{\sigma_{i}}(y_{ik}\,y_{jk})
          & \underline{y_{jk}\,p_{ij}\,y_{ik}\,p_{ij}^{-1}}
                                  & \eqref{C2}   \\
        & & y_{ik}\,y_{jk}        &                         \\[1em]
  %
  %
      m=i \text{ and } j>i+1
        & \Phi_{\sigma_{i}}(x_{ij}\,x_{ik}) 
          & x_{i+1,j}\,x_{i+1,k}
                                  &                         \\[0.7em]
        & \Phi_{\sigma_{i}}(x_{jk}\,y_{ij})
          & x_{jk}\,y_{i+1,j}    &                         \\[0.7em]
        & \Phi_{\sigma_{i}}(y_{ik}\,y_{jk})
          & y_{i+1,k}\,y_{jk}    &                         \\[1em]
  %
  %
      m=j-1 \text{ and } i<j-1
        & \Phi_{\sigma_{j-1}}(x_{ij}\,x_{ik})
          & p_{j-1,j}\,x_{i,j-1}\,\underline{p_{j-1,j}^{-1}\,x_{ik}}
                                  & \eqref{C1}   \\
        & &  p_{j-1,j}\ x_{i,j-1}\,x_{ik}\ p_{j-1,j}^{-1}
                                  &                         \\[0.7em]
        & \Phi_{\sigma_{j-1}}(x_{jk}\,y_{ij})
          & p_{j-1,j}\ x_{j-1,k}\,y_{i,j-1}\ p_{j-1,j}^{-1}
                                  &                         \\[0.7em]
        & \Phi_{\sigma_{j-1}}(y_{ik}\,y_{jk})
          & \underline{y_{ik}\,p_{j-1,j}}\,y_{j-1,k}\,p_{j-1,j}^{-1}
                                  & \eqref{C1}   \\
        & & p_{j-1,j}\ y_{ik}\,y_{j-1,k}\ p_{j-1,j}^{-1}
                                  &                         \\[1em]
  %
  %
      m=j \text{ and } k=j+1
        & \Phi_{\sigma_{j}}(x_{ij}\,x_{ik})
          & \underline{x_{ik}\,p_{jk}\,x_{ij}\,p_{jk}^{-1}}
                                  & \eqref{C2}   \\
        & & x_{ij}\,x_{ik}        &                         \\[0.7em]
        & \Phi_{\sigma_{j}}(x_{jk}\,y_{ij})
          & t_k^{-1}\,\underline{y_{jk}}\,t_k\,y_{ik}
                                  & \eqref{M-y} \\
        & & \underline{t_k^{-1}\,p_{jk}\,t_k}\,y_{jk}\,\underline{t_k^{-1}\,p_{jk}^{-1}\,t_k}\,y_{ik}
                                  & \eqref{C-pt}^2 \\
        & & p_{jk}\,y_{jk}\,\underline{p_{jk}^{-1}\,y_{ik}}
                                  & \eqref{C2}   \\
        & & p_{jk}\,\underline{y_{jk}\,p_{ij}\,y_{ik}\,p_{ij}^{-1}}\,p_{jk}^{-1}
                                  & \eqref{C2}   \\
        & & p_{jk}\ y_{ik}\,y_{jk}\ p_{jk}^{-1}
                                  &                         \\[0.7em]
        & \Phi_{\sigma_{j}}(y_{ik}\,y_{jk})
          & \underline{p_{jk}\,y_{ij}\,p_{jk}^{-1}}\,x_{jk}
                                  & \eqref{C2}   \\
        & & \underline{p_{ik}^{-1}\,y_{ij}\,p_{ik}\,x_{jk}}
                                  & \eqref{C2}   \\
        & & x_{jk}\,y_{ij}        &                         \\[1em]
  %
  %
      m=j \text{ and } k>j+1
        & \Phi_{\sigma_{j}}(x_{ij}\,x_{ik}) 
          & x_{i,j+1}\,x_{ik}    &                         \\[0.7em]
        & \Phi_{\sigma_{j}}(x_{jk}\,y_{ij})
          & x_{j+1,k}\,y_{i,j+1}
                                  &                         \\[0.7em]
        & \Phi_{\sigma_{j}}(y_{ik}\,y_{jk})
          & y_{ik}\,y_{j+1,k}    &                         \\[1em]
  %
  %
      m=k-1 \text{ and } j<k-1
        & \Phi_{\sigma_{k-1}}(x_{ij}\,x_{ik}) 
          & \underline{x_{ij}\,p_{k-1,k}}\,x_{i,k-1}\,p_{k-1,k}^{-1}
                                  & \eqref{C1}   \\
        & & p_{k-1,k}\ x_{ij}\,x_{i,k-1}\ p_{k-1,k}^{-1}
                                  &                         \\[0.7em]
        & \Phi_{\sigma_{k-1}}(x_{jk}\,y_{ij})
          & p_{k-1,k}\,x_{j,k-1}\,\underline{p_{k-1,k}^{-1}\,y_{ij}}
                                  & \eqref{C1}   \\
        & & p_{k-1,k}\ x_{j,k-1}\,y_{ij}\ p_{k-1,k}^{-1}
                                  &                         \\[0.7em]
        & \Phi_{\sigma_{k-1}}(y_{ik}\,y_{jk})
          & p_{k-1,k}\ y_{i,k-1}\,y_{j,k-1}\ p_{k-1,k}^{-1}
                                  &                         \\[1em]
  %
  %
      m=k
        & \Phi_{\sigma_{k}}(x_{ij}\,x_{ik})
          & x_{ij}\,x_{i,k+1}    &                         \\[0.7em]
        & \Phi_{\sigma_{k}}(x_{jk}\,y_{ij})
          & x_{j,k+1}\,y_{ij}    &                         \\[0.7em]
        & \Phi_{\sigma_{k}}(y_{ik}\,y_{jk})
          & y_{i,k+1}\,y_{j,k+1} &
  \end{longtable}

  For $\Phi_{\tau_m}$ we only have three cases where 
  $\Phi_{\tau_m}(r_\lambda) \ne r_\lambda$ these are when
  $m=i$ and $r_\lambda = x_{ij}\,x_{ik}$, $m=j$ and 
  $r_\lambda = x_{jk}\,y_{ij}$, and $m=k$ and 
  $r_\lambda = y_{ik}\,y_{jk}$.
  \begin{longtable}{R@{$\ =\ $}LC} \deductionTableWidth
    \Phi_{\tau_i}(x_{ij}\,x_{ik})
      & x_{ij}^{-1}\,\underline{p_{ij}\,x_{ik}^{-1}}\,p_{ik}
                                    & \eqref{C2} \\
      & \underline{x_{ij}^{-1}\,p_{jk}^{-1}\,x_{ik}^{-1}\,p_{jk}}\,p_{ij}\,p_{ik}
                                    & \eqref{C2} \\
      & x_{ik}^{-1}\,x_{ij}^{-1}\ p_{ij}\,p_{ik}          \\[1em]
    \Phi_{\tau_j}(x_{jk}\,y_{ij})
      & x_{jk}^{-1}\,\underline{p_{jk}\,y_{ij}^{-1}}\,p_{ij}
                                    & \eqref{C2} \\
      & \underline{x_{jk}^{-1}\,p_{ik}^{-1}\,y_{ij}^{-1}\,p_{ik}}\,p_{jk}\,p_{ij}
                                    & \eqref{C2} \\
      & y_{ij}^{-1}\,x_{jk}^{-1}\ p_{jk}\,p_{ij}          \\[1em]
    \Phi_{\tau_k}(y_{ik}\,y_{jk})
      & y_{ik}^{-1}\,\underline{p_{ik}\,y_{jk}^{-1}}\,p_{jk}
                                    & \eqref{C2} \\
      & \underline{y_{ik}^{-1}\,p_{ij}^{-1}\,y_{jk}^{-1}\,p_{ij}}\,p_{ik}\,p_{jk}
                                    & \eqref{C2} \\
      & y_{jk}^{-1}\,y_{ik}^{-1}\ p_{ik}\,p_{jk}
  \end{longtable}
 
  For $r_\lambda = x_{ik}^{-1} x_{ij}^{-1}$, 
  $y_{ij}^{-1} x_{ik}^{-1}$ and $y_{jk}^{-1} y_{ik}^{-1}$
  we have shown that for some $h_1, h_2 \in \FramedPureBraid{n}$
  and some $r_{\lambda'}^{-1}$ we have that 
  $\Phi_g(r_\lambda^{-1}) =_R h_1\ r_{\lambda'}^{-1}\ h_2$.  Hence
  we have $\Phi_g(r_\lambda) =_R h_2^{-1}\ r_{\lambda'}\ h_1^{-1}$.
\end{proof}

\begin{proposition} \label{closed-under-phi}
  The map $\Phi$ satisfies property \eqref{D}.
  In other words, for any word $g \in F\langle \sigma_i,\,\tau_j \rangle$
  and any relation $x =_R y$ we have that $\Phi_g(x) =_R \Phi_g(y)$.
\end{proposition}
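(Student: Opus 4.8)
The plan is to reduce the statement to a finite check. Property \eqref{D} asserts that $\Phi_g$ descends from the free group to the quotient $\PureHilden{2n}=\langle S\mid R\rangle$; equivalently, $\Phi_g$ sends every relation of $R$ to a consequence of $R$. First I would observe that since $\Phi$ is a homomorphism $F\langle\sigma_i,\tau_j\rangle\to\aut(F\langle p_{ij},x_{ij},y_{ij},t_k\rangle)$, it suffices to verify the claim for the generators $g=\sigma_m,\tau_m$ and their inverses $g=\sigma_m^{-1},\tau_m^{-1}$, whose formulas are given explicitly in Section~\ref{definePhi} and Proposition~\ref{Phi-inverse}. Indeed, if $\Phi_{g_1}$ and $\Phi_{g_2}$ each preserve $=_R$, then so does $\Phi_{g_1 g_2}=\Phi_{g_1}\circ\Phi_{g_2}$, so the property is closed under composition and the generating set of $g$'s is enough.

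Next I would note that it is enough to treat each relation $r=_R 1$ in the list $R$ individually, showing $\Phi_g(r)=_R 1$ for $g$ ranging over $\sigma_m^{\pm1},\tau_m^{\pm1}$. For a relation written as $u=_R v$, this amounts to checking $\Phi_g(u)=_R\Phi_g(v)$, and since $\Phi_g$ is a free-group automorphism it respects the word operations, so one only needs the images of the individual generators appearing in $r$. The relations in $R$ are \eqref{C-pt}, \eqref{C-tt}, \eqref{C-xt}, \eqref{C-yt}, \eqref{C1}, \eqref{C2}, \eqref{C3}, \eqref{M-x} and \eqref{M-y}, together with the $R_0$ relations already inside $R$; for each I would substitute the defining formulas for $\Phi_{\sigma_m}$ or $\Phi_{\tau_m}$ and simplify the resulting word using only relations of $R$, verifying it collapses to the image of the other side.

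The bulk of the argument is therefore a finite but tedious case analysis organised by which relation and which generator one applies, split by how the indices of the relation sit relative to $m$ (for $\sigma_m$) or by whether the index equals $m$ (for $\tau_m$). The main obstacle, and the only genuinely delicate part, lies in the cases where $\Phi_{\sigma_m}$ acts non-trivially on the ``special'' generators $x_{m,m+1}$ and $y_{m,m+1}$ by the formulas $\Phi_{\sigma_m}(x_{m,m+1})=t_{m+1}^{-1}y_{m,m+1}t_{m+1}$ and $\Phi_{\sigma_m}(y_{m,m+1})=x_{m,m+1}$, and similarly where $\Phi_{\tau_m}$ introduces the inverse-and-$p$ twist on $x_{mk}$ or $y_{km}$. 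Here the conjugating factors $t_{m+1}$ and the $p$-terms do not simply cancel, so one must invoke the mixed relations \eqref{M-x}, \eqref{M-y} and the commutation relations \eqref{C2}, \eqref{C-pt} to push them through — exactly the kind of deductions already carried out in the computation of property \eqref{C} in the previous proposition. I would lean on those same deduction tables, and would expect the interaction of \eqref{C2} with \eqref{M-x}/\eqref{M-y} on the triple relations to be where the verification requires the most care. Since property \eqref{A} guarantees each $\Phi_g$ equals conjugation by $g$ in $\Braid{2n}$, every relation of $R$ is automatically preserved at the level of the group $\PureHilden{2n}$; the content of property \eqref{D} is that this preservation is witnessed by a \emph{formal} derivation from $R$, which is what the case check supplies.
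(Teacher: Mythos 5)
Your reduction is sound and your overall plan is the same as the paper's: reduce to a monoid generating set of $F\langle\sigma_i,\tau_j\rangle$ using that the property is closed under composition of the $\Phi_g$, then verify each relation of $R$ against each generator by an explicit deduction from $R$, with the delicate cases being exactly the ones you flag (the $x_{m,m+1}$, $y_{m,m+1}$ formulas and the $\tau$--twist interacting with \eqref{C2}, \eqref{M-x}, \eqref{M-y}). The one genuine structural difference is in how the inverse generators are handled. You propose to run the full case analysis again for $\Phi_{\sigma_m^{-1}}$ and $\Phi_{\tau_m^{-1}}$ using the formulas from Proposition~\ref{Phi-inverse}; that is valid but roughly doubles the checking. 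The paper instead takes $\{\sigma_m,\tau_m,\sigma_m^{-2},\tau_m^{-2}\}$ as its monoidal generating set and invokes Lemma~\ref{inverse-squared}, which says $\Phi_{\sigma_m^{-2}}(x)=_R p_{m,m+1}^{-1}\,x\,p_{m,m+1}$ and $\Phi_{\tau_m^{-2}}(x)=_R t_m^{-1}\,x\,t_m$; since conjugating both sides of a relation by a fixed word trivially preserves $=_R$, the inverse generators $\sigma_m^{-1}=\sigma_m\sigma_m^{-2}$ and $\tau_m^{-1}=\tau_m\tau_m^{-2}$ come for free and no second round of deductions is needed. You would do well to adopt that shortcut.

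The more serious issue is that your proposal stops at the point where the proof actually begins. For this proposition the entire mathematical content is the finite family of deductions: for each of \eqref{C-xt}, \eqref{C-yt}, \eqref{C1}, \eqref{C2}, \eqref{C3}, \eqref{M-x}, \eqref{M-y} and each non-trivial position of $q$ relative to the indices, one must exhibit a word-by-word derivation of $\Phi_g(u)=_R\Phi_g(v)$ using only the relations in $R$ --- and it is not a priori obvious that these derivations exist, since \eqref{C2} only holds for the specific triples in Table~\ref{C2-table} and one must check at each step that the instance being invoked is actually in the table (the paper has to single out exceptional sub-cases such as $(\alpha,\beta,\gamma)=(x,x,p)$ with $q=i$, $j=i+1$, precisely because the generic deduction fails there). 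Property \eqref{A} guarantees the relations hold in $\Braid{2n}$, as you note, but gives no information about derivability from $R$, so nothing short of writing out the tables closes the argument. As submitted, this is a correct outline of the paper's strategy rather than a proof.
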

\begin{proof}
  As in the proof of property \ref{C}, it suffices to show this for $g$ in a 
  monoidal generating set for $F\langle \sigma_i,\,\tau_j \rangle$.  For
  $g=\sigma_i^{-2}$ and $\tau_j^{-2}$ this follows from Lemma~\ref{inverse-squared},
  so it remains to show it for $g = \sigma_i$ and $\tau_j$.

  For any relation only involving $p_{ij}$'s and $t_k$'s the image
  under $\Phi_g$ will still only involve $p_{ij}$'s and $t_k$'s and
  hence, by Proposition~\ref{vertex-stabiliser}, the new relation will
  follow from those in $R$.

  We will now consider the action of $\Phi_{\sigma_q}$ and $\Phi_{\tau_q}$
  on each of the relations.  For any relation $x =_R y$ we will say that
  the deduction of $\Phi_g(x) = \Phi_g(y)$ is trivial if $\Phi_g(x) = \Phi_g(y)$
  is a relation in $R$ of the same type.

  \newcommand{\relation}[2]{{\flushleft\par\medskip\makebox[\textwidth][l]{#1\hspace{\stretch{1}}$#2$\hspace{\stretch{1}}}\par\smallskip}}

  %
  \relation{\eqref{C-xt}}{x_{ij}\,t_k = t_k\,x_{ij}\qquad k\neq i,\ i<j}

  First consider $\Phi_{\sigma_q}$.  If we start with $q=1$ and increase
  it the first non-trivial case is when $q=i-1$.  The next case is when
  $q=i$ and this is only non-trivial if $j=i+1$.  The next case is when
  $q=j-1$ and $j \neq i+1$.  The remaining values are all trivial.

  When $q=i-1$ we have that $\Phi_{\sigma_q}\:\!\!(t_k) = t_{k'}$ where $k' \ne i-1$.
  \begin{longtable}{R@{$\ =\ $}Lc} \deductionTableWidth
    \Phi_{\sigma_q}\:\!\!(x_{ij}\,t_k)
      & p_{i-1,i}\,x_{i-1,j}\,\underline{p_{i-1,i}^{-1}\,t_{k'}}  & \eqref{C-pt}\\
      & p_{i-1,i}\,\underline{x_{i-1,j}\,t_{k'}}\,p_{i-1,i}^{-1}  & \eqref{C-xt}\\
      & \underline{p_{i-1,i}\,t_{k'}}\,x_{i-1,j}\,p_{i-1,i}^{-1}  & \eqref{C-pt}\\
      & t_{k'}\,p_{i-1,i}\,x_{i-1,j}\,p_{i-1,i}^{-1}              & \\
      & \Phi_{\sigma_q}\:\!\!(t_k\,x_{ij})
  \end{longtable}

  When $q=i$ and $j = i+1$ we have that $\Phi_{\sigma_q}\:\!\!(t_k) = 
  t_{k'}$ where $k' \ne j$.
  \begin{longtable}{R@{$\ =\ $}Lc} \deductionTableWidth
    \Phi_{\sigma_q}\:\!\!(x_{ij}\,t_k)
      & t_j^{-1}\,y_{ij}\,\underline{t_j\,t_{k'}}        & \eqref{C-tt}\\
      & t_j^{-1}\,\underline{y_{ij}\,t_{k'}}\,t_j        & \eqref{C-yt}\\
      & \underline{t_j^{-1}\,t_{k'}}\,y_{ij}\,t_j        & \eqref{C-tt}\\
      & t_{k'}\,t_j^{-1}\,y_{ij}\,t_j                    & \\
      & \Phi_{\sigma_q}\:\!\!(t_k\,x_{ij})
  \end{longtable}

  When $q=j-1$ and $j \ne i+1$ we have that $\Phi_{\sigma_q}\:\!\!(t_k) = t_{k'}$
  where $k' \ne i$.
  \begin{longtable}{R@{$\ =\ $}Lc} \deductionTableWidth
    \Phi_{\sigma_q}\:\!\!(x_{ij}\,t_k)
      & p_{j-1,j}\,x_{i,j-1}\,\underline{p_{j-1,j}^{-1}\,t_{k'}}  & \eqref{C-pt}\\
      & p_{j-1,j}\,\underline{x_{i,j-1}\,t_{k'}}\,p_{j-1,j}^{-1}  & \eqref{C-xt}\\
      & \underline{p_{j-1,j}\,t_{k'}}\,x_{i,j-1}\,p_{j-1,j}^{-1}  & \eqref{C-pt}\\
      & t_{k'}\,p_{j-1,j}\,x_{i,j-1}\,p_{j-1,j}^{-1}  & \\
      & \Phi_{\sigma_q}\:\!\!(t_k\,x_{ij})
  \end{longtable}

  Now consider $\Phi_{\tau_q}$, the only non-trivial case is when $q=i$.
  \begin{longtable}{R@{$\ =\ $}Lc} \deductionTableWidth
    \Phi_{\tau_q}\:\!\!(x_{ij}\,t_k)
      & x_{ij}^{-1}\,\underline{p_{ij}\,t_k}  & \eqref{C-pt}\\
      & \underline{x_{ij}^{-1}\,t_k}\,p_{ij}  & \eqref{C-xt}\\
      & t_k\,x_{ij}^{-1}\,p_{ij}              & \\
      & \Phi_{\tau_q}\:\!\!(t_k\,x_{ij})
  \end{longtable}

  %
  \relation{\eqref{C-yt}}{y_{ij}\,t_k = t_k\,y_{ij}\qquad k\neq j,\ i<j}
  First consider $\Phi_{\sigma_q}$, the non-trivial cases are $q=i-1$,
  $q=i$ and $j=i+1$, and $q=j-1$ and $j \neq i+1$.
  
  When $q=i-1$ we have that $\Phi_{\sigma_q}\:\!\!(t_k) = t_{k'}$ where $k' \ne j$.
  \begin{longtable}{R@{$\ =\ $}Lc} \deductionTableWidth
    \Phi_{\sigma_q}\:\!\!(y_{ij}\,t_k)
      & p_{i-1,i}\,y_{i-1,j}\,\underline{p_{i-1,i}^{-1}\,t_{k'}}  & \eqref{C-pt}\\
      & p_{i-1,i}\,\underline{y_{i-1,j}\,t_{k'}}\,p_{i-1,i}^{-1}  & \eqref{C-xt}\\
      & \underline{p_{i-1,i}\,t_{k'}}\,y_{i-1,j}\,p_{i-1,i}^{-1}  & \eqref{C-pt}\\
      & t_{k'}\,p_{i-1,i}\,y_{i-1,j}\,p_{i-1,i}^{-1}  &  \\
      & \Phi_{\sigma_q}\:\!\!(t_k\,y_{ij})
  \end{longtable}
  
  When $q=i$ and $j=i+1$ we have that $\Phi_{\sigma_q}\:\!\!(t_k) = t_{k'}$ where
  $k' \ne i$.
  \begin{longtable}{R@{$\ =\ $}Lc} \deductionTableWidth
    \Phi_{\sigma_q}\:\!\!(y_{ij}\,t_k)
      & \underline{x_{ij}\,t_{k'}}        & \eqref{C-xt}\\
      & t_{k'}\,x_{ij}                    &  \\
      & \Phi_{\sigma_q}\:\!\!(t_k\,y_{ij})
  \end{longtable}
  
  When $q=j-1$ and $j \neq i+1$ we have that $\Phi_{\sigma_q}\:\!\!(t_k) = t_{k'}$
  where $k' \ne j-1$.
  \begin{longtable}{R@{$\ =\ $}Lc} \deductionTableWidth
    \Phi_{\sigma_q}\:\!\!(y_{ij}\,t_k)
        & p_{j-1,j}\,y_{i,j-1}\,\underline{p_{j-1,j}^{-1}\,t_{k'}}  & \eqref{C-pt}\\
        & p_{j-1,j}\,\underline{y_{i,j-1}\,t_{k'}}\,p_{j-1,j}^{-1}  & \eqref{C-yt}\\
        & \underline{p_{j-1,j}\,t_{k'}}\,y_{i,j-1}\,p_{j-1,j}^{-1}  & \eqref{C-pt}\\
        & t_{k'}\,p_{j-1,j}\,y_{i,j-1}\,p_{j-1,j}^{-1}  &  \\
        & \Phi_{\sigma_q}\:\!\!(t_k\,y_{ij})
  \end{longtable}

  Now consider $\Phi_{\tau_q}$, the only non-trivial case is when $q=j$.
  \begin{longtable}{R@{$\ =\ $}Lc} \deductionTableWidth
    \Phi_{\tau_q}\:\!\!(y_{ij}\,t_k)
      & y_{ij}^{-1}\,\underline{p_{ij}\,t_k}  & \eqref{C-pt}\\
      & \underline{y_{ij}^{-1}\,t_k}\,p_{ij}  & \eqref{C-yt}\\
      & t_k\,y_{ij}^{-1}\,p_{ij}              & \\
      & \Phi_{\tau_q}\:\!\!(t_k\,y_{ij})
  \end{longtable}
  
  %
  \relation{\eqref{C1}}{\alpha_{ij}\,\beta_{kl} = \beta_{kl}\,\alpha_{ij}
                       \qquad (i,j,k,l)\text{ cyclically ordered}}
  First consider $\Phi_{\sigma_q}$.  The non-trivial cases are $q=i-1$
  and $i \ne l+1$, $q=i$ and $j=i+1$, $q=j-1$ and $j \neq i+1$, $q=j$
  and $k=j+1$, $q=k-1$ and $j \neq k-1$, $q=k$ and $l = k+1$, $p=l-1$
  and $l \neq k+1$, and $p=l$ and $i=l+1$.

  When $q=i-1$ and $i \ne l+1$ we have the following.
  \begin{longtable}{R@{$\ =\ $}Lc} \deductionTableWidth
    \Phi_{\sigma_q}\:\!\!(\alpha_{ij}\,\beta_{kl})
      & p_{i-1,i}\,\alpha_{i-1,j}\,\underline{p_{i-1,i}^{-1}\,\beta_{kl}}  & \eqref{C1}\\
      & p_{i-1,i}\,\underline{\alpha_{i-1,j}\,\beta_{kl}}\,p_{i-1,i}^{-1}  & \eqref{C1}\\
      & \underline{p_{i-1,i}\,\beta_{kl}}\,\alpha_{i-1,j}\,p_{i-1,i}^{-1}  & \eqref{C1}\\
      & \beta_{kl}\,p_{i-1,i}\,\alpha_{i-1,j}\,p_{i-1,i}^{-1}  &  \\
      & \Phi_{\sigma_q}\:\!\!(\beta_{kl}\,\alpha_{ij})
  \end{longtable}
  
  When $q=i$ and $j=i+1$ the only non-trivial case is when $\alpha=x$.
  \begin{longtable}{R@{$\ =\ $}Lc} \deductionTableWidth
    \Phi_{\sigma_q}\:\!\!(x_{ij}\,\beta_{kl})
      & t_j^{-1}\,y_{ij}\,\underline{t_j\,\beta_{kl}}  & (C-$\beta t$)  \\
      & t_j^{-1}\,\underline{y_{ij}\,\beta_{kl}}\,t_j  & \eqref{C1} \\
      & \underline{t_j^{-1}\,\beta_{kl}}\,y_{ij}\,t_j  & (C-$\beta t$)  \\
      & \beta_{kl}\,t_j^{-1}\,y_{ij}\,t_j  &  \\
      & \Phi_{\sigma_q}\:\!\!(\beta_{kl}\,x_{ij})
  \end{longtable}
  
  When $q=j-1$ and $j \neq i+1$ we have the following.
  \begin{longtable}{R@{$\ =\ $}Lc} \deductionTableWidth
    \Phi_{\sigma_q}\:\!\!(\alpha_{ij}\,\beta_{kl})
      & p_{j-1,j}\,\alpha_{i,j-1}\,\underline{p_{j-1,j}^{-1}\,\beta_{kl}}  & \eqref{C1}\\
      & p_{j-1,j}\,\underline{\alpha_{i,j-1}\,\beta_{kl}}\,p_{j-1,j}^{-1}  & \eqref{C1}\\
      & \underline{p_{j-1,j}\,\beta_{kl}}\,\alpha_{i,j-1}\,p_{j-1,j}^{-1}  & \eqref{C1}\\
      & \beta_{kl}\,p_{j-1,j}\,\alpha_{i,j-1}\,p_{j-1,j}^{-1}  &  \\
      & \Phi_{\sigma_q}\:\!\!(\beta_{kl}\,\alpha_{ij})
  \end{longtable}
  
  When $q=j$ and $k = j+1$ we have the following.
  \begin{longtable}{R@{$\ =\ $}Lc} \deductionTableWidth
    \Phi_{\sigma_q}\:\!\!(\alpha_{ij}\,\beta_{kl})
      & \underline{\alpha_{ik}\,p_{jk}\,\beta_{jl}\,p_{jk}^{-1}}  & \eqref{C3}\\
      & p_{jk}\,\beta_{jl}\,p_{jk}^{-1}\,\alpha_{ik}  &  \\
      & \Phi_{\sigma_q}\:\!\!(\beta_{kl}\,\alpha_{ij})
  \end{longtable}
  
  When $q=k-1$ and $j \neq k-1$ we have the following.
  \begin{longtable}{R@{$\ =\ $}Lc} \deductionTableWidth
    \Phi_{\sigma_q}\:\!\!(\alpha_{ij}\,\beta_{kl})
      & \underline{\alpha_{ij}\,p_{k-1,k}}\,\beta_{k-1,l}\,p_{k-1,k}^{-1}  & \eqref{C1}\\
      & p_{k-1,k}\,\underline{\alpha_{ij}\,\beta_{k-1,l}}\,p_{k-1,k}^{-1}  & \eqref{C1}\\
      & p_{k-1,k}\,\beta_{k-1,l}\,\underline{\alpha_{ij}\,p_{k-1,k}^{-1}}  & \eqref{C1}\\
      & p_{k-1,k}\,\beta_{k-1,l}\,p_{k-1,k}^{-1}\,\alpha_{ij}  &  \\
      & \Phi_{\sigma_q}\:\!\!(\beta_{kl}\,\alpha_{ij})
  \end{longtable}
  
  When $q=k$ and $l=k+1$ the only non-trivial case is when $\beta=x$.
  \begin{longtable}{R@{$\ =\ $}Lc} \deductionTableWidth
    \Phi_{\sigma_q}\:\!\!(\alpha_{ij}\,x_{kl})
      & \underline{\alpha_{ij}\,t_l^{-1}}\,y_{kl}\,t_l  & (C-$\alpha t$)  \\
      & t_l^{-1}\,\underline{\alpha_{ij}\,y_{kl}}\,t_l  & \eqref{C1} \\
      & t_l^{-1}\,y_{kl}\,\underline{\alpha_{ij}\,t_l}  & (C-$\alpha t$)  \\
      & t_l^{-1}\,y_{kl}\,t_l\,\alpha_{ij}  &  \\
      & \Phi_{\sigma_q}\:\!\!(x_{kl}\,\alpha_{ij})
  \end{longtable}
  
  When $q=l-1$ and $l \neq k+1$ we have the following.
  \begin{longtable}{R@{$\ =\ $}Lc} \deductionTableWidth
    \Phi_{\sigma_q}\:\!\!(\alpha_{ij}\,\beta_{kl})
      & \underline{\alpha_{ij}\,p_{l-1,l}}\,\beta_{k,l-1}\,p_{l-1,l}^{-1}  & \eqref{C1}\\
      & p_{l-1,l}\,\underline{\alpha_{ij}\,\beta_{k,l-1}}\,p_{l-1,l}^{-1}  & \eqref{C1}\\
      & p_{l-1,l}\,\beta_{k,l-1}\,\underline{\alpha_{ij}\,p_{l-1,l}^{-1}}  & \eqref{C1}\\
      & p_{l-1,l}\,\beta_{k,l-1}\,p_{l-1,l}^{-1}\,\alpha_{ij}  &  \\
      & \Phi_{\sigma_q}\:\!\!(\beta_{kl}\,\alpha_{ij})
  \end{longtable}
  
  Finally, when $q=l$ and $i=l+1$ we have the following.
  \begin{longtable}{R@{$\ =\ $}Lc} \deductionTableWidth
    \Phi_{\sigma_q}\:\!\!(\alpha_{ij}\,\beta_{kl})
      & \underline{p_{il}\,\alpha_{jl}\,p_{il}^{-1}\,\beta_{ik}}  & \eqref{C3} \\
      & \beta_{ik}\,p_{il}\,\alpha_{jl}\,p_{il}^{-1}  & \\
      & \Phi_{\sigma_q}\:\!\!(\beta_{kl}\,\alpha_{ij})
  \end{longtable}

  Now consider $\Phi_{\tau_q}$, there are two non-trivial cases. 
  In the first case $\Phi_{\tau_q}\:\!\!(\alpha_{ij}) = \alpha_{ij}^{-1} p_{ij}$
  and we have the following.
  \begin{longtable}{R@{$\ =\ $}Lc} \deductionTableWidth
    \Phi_{\tau_q}\:\!\!(\alpha_{ij}\,\beta_{kl})
      & \alpha_{ij}^{-1}\,\underline{p_{ij}\,\beta_{kl}}  & \eqref{C1}\\
      & \underline{\alpha_{ij}^{-1}\,\beta_{kl}}\,p_{ij}  & \eqref{C1}\\
      & \beta_{kl}\,\alpha_{ij}^{-1}\,p_{ij}              & \\
      & \Phi_{\tau_q}\:\!\!(\beta_{kl}\,\alpha_{ij})
  \end{longtable}

  In the second case $\Phi_{\tau_q}\:\!\!(\beta_{kl}) = \beta_{kl}^{-1} p_{kl}$
  and we have the following.
  \begin{longtable}{R@{$\ =\ $}Lc} \deductionTableWidth
    \Phi_{\tau_q}\:\!\!(\alpha_{ij}\,\beta_{kl})
      & \underline{\alpha_{ij}\,\beta_{kl}^{-1}}\,p_{kl}  & \eqref{C1}\\
      & \beta_{kl}^{-1}\,\underline{\alpha_{ij}\,p_{kl}}  & \eqref{C1}\\
      & \beta_{kl}^{-1}\,p_{kl}\,\alpha_{ij}              & \\
      & \Phi_{\tau_q}\:\!\!(\beta_{kl}\,\alpha_{ij})
  \end{longtable}

  %
  \relation{\eqref{C2}}{
       \alpha_{ij}\,\,\beta_{ik}\,\gamma_{jk}
                         = \beta_{ik}\,\gamma_{jk}\,\,\alpha_{ij}
            \qquad \begin{array}{c}
                    (i, j, k) \text{ cyclically ordered,} \\
                    (\alpha, \beta, \gamma)
                    \text{ as in Table~\ref{C2-table}}
             \end{array}}
  First consider $\Phi_{\sigma_q}$.  The only non-trivial cases are when
  $q=i-1$ and $i \neq k+1$, $q=i$ and $j=i+1$, $q=j-1$ and $j \neq i+1$,
  $q=j$ and $k=j+1$, $q=k-1$ and $k \neq j+1$, and $q=k$ and $i=k+1$.

  %
  When $q=i-1$ and $i \neq k+1$ we have the following.
  \begin{longtable}{R@{$\ =\ $}Lc} \deductionTableWidth
    \Phi_{\sigma_q}\:\!\!(\alpha_{ij}\,\beta_{ik}\,\gamma_{jk})
      & p_{i-1,i}\,\alpha_{i-1,j}\,\beta_{i-1,k}\,\underline{p_{i-1,i}^{-1}\,\gamma_{jk}}
                                                    & \eqref{C1} \\
      & p_{i-1,i}\,\underline{\alpha_{i-1,j}\,\beta_{i-1,k}\,\gamma_{jk}}\,p_{i-1,i}^{-1}
                                                    & \eqref{C2} \\
      & p_{i-1,i}\,\beta_{i-1,k}\,\underline{\gamma_{jk}}\,\alpha_{i-1,j}\,p_{i-1,i}^{-1}
                                                    & \eqref{C1} \\
      & p_{i-1,i}\,\beta_{i-1,k}\,p_{i-1,i}^{-1}\,\gamma_{jk}\,p_{i-1,i}\,\alpha_{i-1,j}\,p_{i-1,i}^{-1}
                                                    & \\
      & \Phi_{\sigma_q}\:\!\!(\beta_{ik}\,\gamma_{jk}\,\alpha_{ij})
  \end{longtable}

  %
  When $q=i$ and $j=i+1$ we have two cases. 
  Except for when $i<j<k$ and $(\alpha,\beta,\gamma)=(x,x,p)$ or 
  $k<i<j$ and $(\alpha,\beta,\gamma)=(x,y,p)$ we have the following
  deduction.  Let $\bar t_j$ and $\bar\alpha_{ij}$ be defined as follows.
  \[  \bar t_j = \begin{cases} t_j & \text{if $\alpha = x$} \\
                               1   & \text{if $\alpha \neq x$} \end{cases}
   \qquad
   \bar \alpha_{ij} = \begin{cases} p_{ij} & \text{if $\alpha = p$} \\
                                    y_{ij} & \text{if $\alpha = x$} \\
                                    x_{ij} & \text{if $\alpha = y$} \end{cases} \]
  So we have that $\Phi_{\sigma_q}(\alpha_{ij}) = \bar t_j^{-1}\,\bar\alpha_{ij}\,
  \bar t_j$.
  \begin{longtable}{R@{$\ =\ $}Lc} \deductionTableWidth
    \Phi_{\sigma_q}\:\!\!(\alpha_{ij}\,\beta_{ik}\,\gamma_{jk})
      & \bar t_j^{-1}\,\bar\alpha_{ij}\,\underline{\bar t_j\,\beta_{jk}\,p_{ij}\,\gamma_{ik}\,p_{ij}^{-1}}
                      & \hspace{-3em} (C-$\beta t$) \eqref{C-pt} (C-$\gamma t$) \eqref{C-pt} \\
      & \bar t_j^{-1}\,\bar\alpha_{ij}\,\underline{\beta_{jk}\,p_{ij}\,\gamma_{ik}\,p_{ij}^{-1}}\,\bar t_j
                                                    & \eqref{C2} \\
      & \bar t_j^{-1}\,\underline{\bar\alpha_{ij}\,\gamma_{ik}\,\beta_{jk}}\,\bar t_j  
                                                    & \eqref{C2} \\
      & \underline{\bar t_j^{-1}\,\gamma_{ik}\,\beta_{jk}}\,\bar\alpha_{ij}\,\bar t_j
                                                    & \eqref{C-pt} \eqref{C-pt} \\
      & \underline{\gamma_{ik}\,\beta_{jk}\,p_{ij}}\,p_{ij}^{-1}\,\bar t_j^{-1}\,\bar\alpha_{ij}\,\bar t_j
                                                    & \eqref{C2} \\
      & \beta_{jk} p_{ij}\,\gamma_{ik}\,p_{ij}^{-1}\,\bar t_j^{-1}\,\bar\alpha_{ij}\,\bar t_j
                                                    &  \\
      & \Phi_{\sigma_q}\:\!\!(\beta_{ik}\,\gamma_{jk}\,\alpha_{ij})
  \end{longtable}

  When $i<j<k$ and $(\alpha,\beta,\gamma)=(x,x,p)$ or
  $k<i<j$ and $(\alpha,\beta,\gamma)=(x,y,p)$ we have the following deduction with
  $\beta = x$ or $y$ respectively.
  \begin{longtable}{R@{$\ =\ $}Lc} \deductionTableWidth
    \Phi_{\sigma_q}\:\!\!(x_{ij}\,\beta_{ik}\,p_{jk})
      & t_j^{-1}\,y_{ij}\,t_j\,\underline{\beta_{jk}\,p_{ij}\,p_{ik}}\,p_{ij}^{-1}
                                                    & \eqref{C2} \\
      & \underline{t_j^{-1}\,y_{ij}\,t_j p_{ij}}\,p_{ik}\,\beta_{jk}\,p_{ij}^{-1}
                                                    & \eqref{M-y} \\
      & p_{ij}\,\underline{y_{ij}\,p_{ik}\,\beta_{jk}}\,p_{ij}^{-1}
                                                    & \eqref{C2} \\
      & \underline{p_{ij}\,p_{ik}\,\beta_{jk}}\,y_{ij}\,p_{ij}^{-1}
                                                    & \eqref{C2} \\
      & \beta_{jk}\,p_{ij}\,p_{ik}\,y_{ij}\,p_{ij}^{-1}
                                                    & \eqref{C-pt} \\
      & \beta_{jk}\,p_{ij}\,p_{ik}\,p_{ij}^{-1}\,t_j^{-1}\,\underline{p_{ij}\,t_j\,y_{ij}}\,p_{ij}^{-1}
                                                    & \eqref{M-y} \\
      & \beta_{jk}\,p_{ij}\,p_{ik}\,p_{ij}^{-1}\,t_j^{-1}\,y_{ij}\,\underline{p_{ij}\,t_j}\,p_{ij}^{-1}
                                                    & \eqref{C-pt} \\
      & \beta_{jk}\,p_{ij}\,p_{ik}\,p_{ij}^{-1}\,t_j^{-1}\,y_{ij}\,t_j
                                                    & \\
      & \Phi_{\sigma_q}\:\!\!(\beta_{ik}\,p_{jk}\,x_{ij})
  \end{longtable}

  %
  When $q=j-1$ and $j \neq i+1$ we have the following.
  \begin{longtable}{R@{$\ =\ $}Lc} \deductionTableWidth
    \Phi_{\sigma_q}\:\!\!(\alpha_{ij}\,\beta_{ik}\,\gamma_{jk})
      & p_{j-1,j}\,\alpha_{i,j-1}\,\underline{p_{j-1,j}^{-1}\,\beta_{ik}\,p_{j-1,j}}\,
             \gamma_{j-1,k}\,p_{j-1,j}^{-1}        & \eqref{C1} \\
      & p_{j-1,j}\,\underline{\alpha_{i,j-1}\,\beta_{ik}\,\gamma_{j-1,k}}\,p_{j-1,j}^{-1}
                                                    & \eqref{C2} \\
      & \underline{p_{j-1,j}\,\beta_{ik}}\,\gamma_{j-1,k}\,\alpha_{i,j-1}\,p_{j-1,j}^{-1}
                                                    & \eqref{C1} \\
      & \beta_{ik}\,p_{j-1,j}\,\gamma_{j-1,k}\,\alpha_{i,j-1}\,p_{j-1,j}^{-1} & \\
      & \Phi_{\sigma_q}\:\!\!(\beta_{ik}\,\gamma_{jk}\,\alpha_{ij})
  \end{longtable}

  %
  When $q=j$ and $k = j+1$ we have two cases. 
  Except for when $i<j<k$ and $(\alpha, \beta, \gamma) = (y,p,x)$
  or $j<k<i$ and $(\alpha, \beta, \gamma) = (x,p,x)$ we have the following. Here
  \[  \bar \gamma_{jk} = \begin{cases} p_{jk} & \text{if $\gamma = p$} \\
                                       y_{jk} & \text{if $\gamma = x$} \\
                                       x_{jk} & \text{if $\gamma = y$} \end{cases} \]
  \begin{longtable}{R@{$\ =\ $}Lc} \deductionTableWidth
    \Phi_{\sigma_q}\:\!\!(\alpha_{ij}\,\beta_{ik}\,\gamma_{jk})
      & \alpha_{ik}\,\underline{p_{jk}\beta_{ij}\,p_{jk}^{-1}}\,\bar\gamma_{jk}
                                                    & \eqref{C2} \\
      & \alpha_{ik}\,p_{ik}^{-1}\,\underline{\beta_{ij}\,p_{ik}\,\bar\gamma_{jk}}
                                                    & \eqref{C2} \\
      & \underline{\alpha_{ik}\,\bar\gamma_{jk}\,\beta_{ij}}
                                                    & \eqref{C2} \\
      & \underline{\bar\gamma_{jk}\,\beta_{ij}}\,\alpha_{ik}
                                                    & \eqref{C2} \\
      & \underline{p_{ik}^{-1}\,\beta_{ij}\,p_{ik}}\,\bar\gamma_{jk}\,\alpha_{ik}
                                                    & \eqref{C2} \\
      & p_{jk}\,\beta_{ij}\,p_{jk}^{-1}\,\bar\gamma_{jk}\,\alpha_{ik}
                                                    & \\
      & \Phi_{\sigma_q}\:\!\!(\beta_{ik}\,\gamma_{jk}\,\alpha_{ij})
  \end{longtable}

   When  $i<j<k$ and $(\alpha, \beta, \gamma) = (y,p,x)$ 
  or when $j<k<i$ and $(\alpha, \beta, \gamma) = (x,p,x)$ we have
  \begin{longtable}{R@{$\ =\ $}Lc} \deductionTableWidth
    \Phi_{\sigma_q}\:\!\!(\alpha_{ij}\,p_{ik}\,x_{jk})
      & \alpha_{ik}\,p_{jk}\,p_{ij}\,\underline{p_{jk}^{-1}\,t_k^{-1}\,y_{jk}\,t_k}
                                                    & \eqref{M-y} \\
      & \underline{\alpha_{ik}\,p_{jk}\,p_{ij}}\,y_{jk}\,p_{jk}^{-1}
                                                    & \eqref{C2} \\
      & p_{jk}\,p_{ij}\,\underline{\alpha_{ik}\,y_{jk}}\,p_{jk}^{-1}
                                                    & \eqref{C2} \\
      & p_{jk}\,p_{ij}\,y_{jk}\,\underline{p_{ij}\,\alpha_{ik}\,p_{ij}^{-1}\,p_{jk}^{-1}}
                                                    & \eqref{C2} \\
      & p_{jk}\,p_{ij}\,\underline{y_{jk}\,p_{jk}^{-1}}\,\alpha_{ik}
                                                    & \eqref{M-y} \\
      & p_{jk}\,p_{ij}\,p_{jk}^{-1}\,t_k^{-1}\,y_{jk}\,t_k\,\alpha_{ik}
                                                    & \\
      & \Phi_{\sigma_q}\:\!\!(p_{ik}\,x_{jk}\,\alpha_{ij})
  \end{longtable}

  %
  When $q=k-1$ and $k \neq j+1$ we have the following.
  \begin{longtable}{R@{$\ =\ $}Lc} \deductionTableWidth
    \Phi_{\sigma_q}\:\!\!(\alpha_{ij}\,\beta_{ik}\,\gamma_{jk})
      & \underline{\alpha_{ij}\,p_{k-1,k}}\,\beta_{i,k-1}\,\gamma_{j,k-1}\,p_{k-1,k}^{-1}
                                                    & \eqref{C1} \\
      & p_{k-1,k}\,\underline{\alpha_{ij}\,\beta_{i,k-1}\,\gamma_{j,k-1}}\,p_{k-1,k}^{-1}
                                                    & \eqref{C2} \\
      & p_{k-1,k}\,\beta_{i,k-1}\,\gamma_{j,k-1}\,\underline{\alpha_{ij}\,p_{k-1,k}^{-1}}
                                                    & \eqref{C1} \\
      & p_{k-1,k}\,\beta_{i,k-1}\,\gamma_{j,k-1}\,p_{k-1,k}^{-1}\,\alpha_{ij}
                                                    &  \\
      & \Phi_{\sigma_q}\:\!\!(\beta_{ik}\,\gamma_{jk}\,\alpha_{ij})
  \end{longtable}

  %
  Finally, when $q=k$ and $i = k+1$ we have the following two cases.
  If $\beta \neq x$ then we have the following. Here
  \[  \bar \beta_{ik} = \begin{cases} p_{jk} & \text{if $\beta = p$} \\
                                      y_{jk} & \text{if $\beta = x$} \end{cases} \]
  \begin{longtable}{R@{$\ =\ $}Lc} \deductionTableWidth
    \Phi_{\sigma_q}\:\!\!(\alpha_{ij}\,\beta_{ik}\,\gamma_{jk})
      & \underline{p_{ik}\,\alpha_{jk}\,p_{ik}^{-1}}\,\bar\beta_{ik}\,\gamma_{ij}
                                                    & \eqref{C2} \\
      & \underline{p_{ij}^{-1}\,\alpha_{jk}\,p_{ij}\,\bar\beta_{ik}}\,\gamma_{ij}
                                                    & \eqref{C2} \\[0.4em]
      & \bar\beta_{ik}\,\underline{\alpha_{jk}\,\gamma_{ij}}
                                                    & \eqref{C2} \\
      & \bar\beta_{ik}\,\gamma_{ij}\,p_{ik}\,\alpha_{jk}\,p_{ik}^{-1} \\
      & \Phi_{\sigma_q}\:\!\!(\beta_{ik}\,\gamma_{jk}\,\alpha_{ij})
  \end{longtable}
  
  And if $\beta = x$ then we have the following.
  \begin{longtable}{R@{$\ =\ $}Lc} \deductionTableWidth
    \Phi_{\sigma_q}\:\!\!(\alpha_{ij}\,x_{ik}\,\gamma_{jk})
      & p_{ik}\,\alpha_{jk}\,\underline{p_{ik}^{-1}\,t_i^{-1}}\,y_{ik}\,t_i\,\gamma_{ij}
                                                    & \eqref{C-pt} \\
      & p_{ik}\,\alpha_{jk}\,\underline{t_i^{-1}\,p_{ik}^{-1}\,y_{ik}}\,t_i\,\gamma_{ij}
                                                    & \eqref{M-y} \\
      & p_{ik}\,\alpha_{jk}\,y_{ik}\,\underline{t_i^{-1}\,p_{ik}^{-1}\,t_i}\,\gamma_{ij}
                                                    & \eqref{C-pt} \\
      & p_{ik}\,\alpha_{jk}\,y_{ik}\,\underline{p_{ik}^{-1}\,\gamma_{ij}}
                                                    & \eqref{C2} \\
      & p_{ik}\,\alpha_{jk}\,\underline{y_{ik}\,p_{jk}\,\gamma_{ij}\,p_{jk}^{-1}}\,p_{ik}^{-1}
                                                    & \eqref{C2} \\
      & p_{ik}\,\underline{\alpha_{jk}\,\gamma_{ij}\,y_{ik}}\,p_{ik}^{-1}
                                                    & \eqref{C2} \\
      & p_{ik}\,\underline{\gamma_{ij}\,y_{ik}}\,\alpha_{jk}\,p_{ik}^{-1}
                                                    & \eqref{C2} \\
      & p_{ik}\,y_{ik}\,\underline{p_{jk}\,\gamma_{ij}\,p_{jk}^{-1}}\,\alpha_{jk}\,p_{ik}^{-1}
                                                    & \eqref{C2} \\
      & p_{ik}\,y_{ik}\,\underline{p_{ik}^{-1}}\,\gamma_{ij}\,p_{ik}\,\alpha_{jk}\,p_{ik}^{-1}
                                                    & \eqref{C-pt} \\
      & p_{ik}\,\underline{y_{ik}\,t_i^{-1}\,p_{ik}^{-1}}\,t_i\,\gamma_{ij}\,p_{ik}\,\alpha_{jk}\,p_{ik}^{-1}
                                                    & \eqref{M-y} \\
      & \underline{p_{ik}\,t_i^{-1}\,p_{ik}^{-1}}\,y_{ik}\,t_i\,\gamma_{ij}\,p_{ik}\,\alpha_{jk}\,p_{ik}^{-1}
                                                    & \eqref{C-pt} \\
      & t_i^{-1}\,y_{ik}\,t_i\,\gamma_{ij}\,p_{ik}\,\alpha_{jk}\,p_{ik}^{-1}  \\
      & \Phi_{\sigma_q}\:\!\!(\beta_{ik}\,\gamma_{jk}\,\alpha_{ij})
  \end{longtable}

  %
  Now consider $\Phi_{\tau_q}$, the non-trivial cases are as follows.
  \begin{longtable}{C|CCCC}
  q = i & i<j<k & (x,p,p) & (x,y,y) & (x,x,p) \\
        & j<k<i & (y,p,p) & (y,x,y) & (y,y,p) \\
        & k<i<j & (x,p,p) & (x,x,x) & (x,y,p) \\ \hline
  q = j & i<j<k & (y,p,p) & (y,y,y) & (y,p,x) \\
        & j<k<i & (x,p,p) & (x,x,y) & (x,p,x) \\
        & k<i<j & (y,p,p) & (y,x,x) & (y,p,y) \\ \hline
  q = k & i<j<k & (p,y,y) & (x,y,y) & (y,y,y) \\
        & j<k<i & (p,x,y) & (x,x,y) & (y,x,y) \\
        & k<i<j & (p,x,x) & (x,x,x) & (y,x,x) \\ 
  \end{longtable}

  %
  For the first two columns of the cases $q=i$ and $q=j$ we have the
  following.
  \begin{longtable}{R@{$\ =\ $}Lc} \deductionTableWidth
    \Phi_{\tau_q}\:\!\!(\alpha_{ij}\,\beta_{ik}\,\gamma_{jk})
      & \alpha_{ij}^{-1}\,\underline{p_{ij}\,\beta_{ik}\,\gamma_{jk}} 
                                            & \eqref{C2} \\
      & \underline{\alpha_{ij}^{-1}\,\beta_{ik}\,\gamma_{jk}}\,p_{ij}
                                            & \eqref{C2} \\
      & \beta_{ik}\,\gamma_{jk}\,\alpha_{ij}^{-1}\,p_{ij} & \\
      & \Phi_{\tau_q}\:\!\!(\beta_{ik}\,\gamma_{jk}\,\alpha_{ij})
  \end{longtable}

  %
  For the third column in the case $q=i$ we have the following.
  \begin{longtable}{R@{$\ =\ $}Lc} \deductionTableWidth
    \Phi_{\tau_q}\:\!\!(\alpha_{ij}\,\beta_{ik}\,\gamma_{jk})
      & \alpha_{ij}^{-1}\,p_{ij}\,\beta_{ik}^{-1}\,\underline{p_{ik}\,\gamma_{jk}}
                                            & \eqref{C2} \\
      & \alpha_{ij}^{-1}\,\underline{p_{ij}\,\beta_{ik}^{-1}\,p_{ij}^{-1}}\,p_{ik}\,\gamma_{jk}\,p_{ij}
                                            & \eqref{C2} \\
      & \underline{\alpha_{ij}^{-1}\,p_{jk}^{-1}\,\beta_{ik}^{-1}\,p_{jk}}\,p_{ik}\,\gamma_{jk}\,p_{ij}
                                            & \eqref{C2} \\
      & \beta_{ik}^{-1}\,\underline{\alpha_{ij}^{-1}\,p_{ik}}\,\gamma_{jk}\,p_{ij}
                                            & \eqref{C2} \\

      & \beta_{ik}^{-1}\,p_{ik}\,\gamma_{jk}\,\alpha_{ij}^{-1}\,p_{ij} \\
      & \Phi_{\tau_q}\:\!\!(\beta_{ik}\,\gamma_{jk}\,\alpha_{ij})
  \end{longtable}

  %
  For the third column in the case $q=j$ we have the following.
  \begin{longtable}{R@{$\ =\ $}Lc} \deductionTableWidth
    \Phi_{\tau_q}\:\!\!(\alpha_{ij}\,\beta_{ik}\,\gamma_{jk})
      & \alpha_{ij}^{-1}\,\underline{p_{ij}\,\beta_{ik}\,\gamma_{jk}^{-1}}\,p_{jk}
                                            & \eqref{C2} \\
      & \alpha_{ij}^{-1}\,\gamma_{jk}^{-1}\,\underline{p_{ij}\,\beta_{ik}\,p_{jk}}
                                            & \eqref{C2} \\
      & \underline{\alpha_{ij}^{-1}\,\gamma_{jk}^{-1}}\,\beta_{ik}\,p_{jk}\,p_{ij}
                                            & \eqref{C2} \\
      & \beta_{ik}\,\gamma_{jk}^{-1}\,\underline{\beta_{ik}^{-1}\,\alpha_{ij}^{-1}\,\beta_{ik}\,p_{jk}}\,p_{ij}
                                            & \eqref{C2} \\
      & \beta_{ik}\,\gamma_{jk}^{-1}\,p_{jk}\,\alpha_{ij}^{-1}\,p_{ij} \\
      & \Phi_{\tau_q}\:\!\!(\beta_{ik}\,\gamma_{jk}\,\alpha_{ij})
  \end{longtable}

  %
  For the case when $q=k$ we have the following.
  \begin{longtable}{R@{$\ =\ $}Lc} \deductionTableWidth
    \Phi_{\tau_q}\:\!\!(\alpha_{ij}\,\beta_{ik}\,\gamma_{jk})
      & \alpha_{ij}\,\beta_{ik}^{-1}\,\underline{p_{ik}\,\gamma_{jk}^{-1}}\,p_{jk}
                                            & \eqref{C2} \\
      & \alpha_{ij}\,\underline{\beta_{ik}^{-1}\,p_{ij}^{-1}\,\gamma_{jk}^{-1}\,p_{ij}}\,p_{ik}\,p_{jk}
                                            & \eqref{C2} \\
      & \underline{\alpha_{ij}\,\gamma_{jk}^{-1}\,\beta_{ik}^{-1}}\,p_{ik}\,p_{jk}
                                            & \eqref{C2} \\
      & \gamma_{jk}^{-1}\,\beta_{ik}^{-1}\,\underline{\alpha_{ij}\,p_{ik}\,p_{jk}}
                                            & \eqref{C2} \\
      & \underline{\gamma_{jk}^{-1}\,\beta_{ik}^{-1}}\,p_{ik}\,p_{jk}\,\alpha_{ij}
                                            & \eqref{C2} \\
      & \beta_{ik}^{-1}\,\underline{p_{ij}^{-1}\,\gamma_{jk}^{-1}\,p_{ij}\,p_{ik}}\,p_{jk}\,\alpha_{ij}
                                            & \eqref{C2} \\
      & \beta_{ik}^{-1}\,p_{ik}\,\gamma_{jk}^{-1}\,p_{jk}\,\alpha_{ij} \\
      & \Phi_{\tau_q}\:\!\!(\beta_{ik}\,\gamma_{jk}\,\alpha_{ij})
  \end{longtable}

  %
  \relation{\eqref{C3}}{
                       \alpha_{ik}\,\,p_{jk}\,\beta_{jl}\,p_{jk}^{-1}
                         = p_{jk}\,\beta_{jl}\,p_{jk}^{-1}\,\,\alpha_{ik}
                       \qquad (i,j,k,l)\text{ cyclically ordered}}
  First consider $\Phi_{\sigma_q}$.  As before the only non-trivial
  cases are when $q=i-1$ and $i \neq l+1$, $q=i$ and $j=i+1$, $q=j-1$
  and $j \neq i+1$, $q=j$ and $k=j+1$, $q=k-1$ and $k \neq j+1$, $q=k$
  and $l = k+1$, $p=l-1$ and $l \neq k+1$, and $p=l$ and $i=l+1$.
  
  When $q=i-1$ we have the following.
  \begin{longtable}{R@{$\ =\ $}LC} \deductionTableWidth
    \Phi_{\sigma_q}\:\!\!(\alpha_{ik}\,p_{jk}\,\beta_{jl}\,p_{jk}^{-1})
      & p_{i-1,i}\,\alpha_{i-1,k}\,\underline{p_{i-1,i}^{-1}\,p_{jk}\,\beta_{jl}\,p_{jk}^{-1}} 
                                                    & \eqref{C1} \eqref{C1} \eqref{C1}\\
      & p_{i-1,i}\,\underline{\alpha_{i-1,k}\,p_{jk}\,\beta_{jl}\,p_{jk}^{-1}}\,p_{i-1,i}^{-1}  
                                                    & \eqref{C3}\\
      & \underline{p_{i-1,i}\,p_{jk}\,\beta_{jl}\,p_{jk}^{-1}}\,\alpha_{i-1,k}\,p_{i-1,i}^{-1}  
                                                    & \eqref{C1} \eqref{C1} \eqref{C1}\\
      & p_{jk}\,\beta_{jl}\,p_{jk}^{-1}\,p_{i-1,i}\,\alpha_{i-1,k}\,p_{i-1,i}^{-1}  &  \\
      & \Phi_{\sigma_q}\:\!\!(p_{jk}\,\beta_{jl}\,p_{jk}^{-1}\,\alpha_{ik})
  \end{longtable}
  
  When $q=i$ and $j=i+1$ we have the following.
  (Here the \eqref{C2}s hold because we are in either of the bottom two
  rows of Table~\ref{C2-table}, both of which contain $(\alpha,p,p)$ for
  $\alpha=p$, $x$, and $y$.)
  \begin{longtable}{R@{$\ =\ $}LC} \deductionTableWidth
    \Phi_{\sigma_q}\:\!\!(\alpha_{ik}\,p_{jk}\,\beta_{jl}\,p_{jk}^{-1})
      & \underline{\alpha_{jk}\,p_{ij}\,p_{ik}}\,\beta_{il}\,p_{ik}^{-1}\,p_{ij}^{-1} 
                                                    & \eqref{C2} \\
      & p_{ij}\,p_{ik}\,\underline{\alpha_{jk}\,\beta_{il}}\,p_{ik}^{-1}\,p_{ij}^{-1} 
                                                    & \eqref{C1} \\
      & p_{ij}\,p_{ik}\,\beta_{il}\,\underline{\alpha_{jk}\,p_{ik}^{-1}\,p_{ij}^{-1}} 
                                                    & \eqref{C2} \\
      & p_{ij}\,p_{ik}\,\beta_{il}\,p_{ik}^{-1}\,p_{ij}^{-1}\,\alpha_{jk} & \\
      & \Phi_{\sigma_q}\:\!\!(p_{jk}\,\beta_{jl}\,p_{jk}^{-1}\,\alpha_{ik})
  \end{longtable}
  
  When $q=j-1$ and $j \neq i+1$ we have the following.
  \begin{longtable}{R@{$\ =\ $}LC} \deductionTableWidth
    \Phi_{\sigma_q}\:\!\!(\alpha_{ik}\,p_{jk}\,\beta_{jl}\,p_{jk}^{-1})
      & \underline{\alpha_{ik}\,p_{j-1,j}}\,p_{j-1,k}\,\beta_{j-1,l}\,p_{j-1,k}^{-1}\,p_{j-1,j}^{-1}  
                                                    & \eqref{C1}\\
      & p_{j-1,j}\,\underline{\alpha_{ik}\,p_{j-1,k}\,\beta_{j-1,l}\,p_{j-1,k}^{-1}}\,p_{j-1,j}^{-1}
                                                    & \eqref{C3}\\
      & p_{j-1,j}\,p_{j-1,k}\,\beta_{j-1,l}\,p_{j-1,k}^{-1}\,\underline{\alpha_{ik}\, p_{j-1,j}^{-1}}
                                                    & \eqref{C1}\\
      & p_{j-1,j}\,p_{j-1,k}\,\beta_{j-1,l}\,p_{j-1,k}^{-1}\,p_{j-1,j}^{-1}\, \alpha_{ik}  &  \\
      & \Phi_{\sigma_q}\:\!\!(p_{jk}\,\beta_{jl}\,p_{jk}^{-1}\,\alpha_{ik})
  \end{longtable}
  
  When $q=j$ and $k = j+1$ we have the following.
  \begin{longtable}{R@{$\ =\ $}LC} \deductionTableWidth
    \Phi_{\sigma_q}\:\!\!(\alpha_{ik}\,p_{jk}\,\beta_{jl}\,p_{jk}^{-1})
      & p_{jk}\,\underline{\alpha_{ij}\,\beta_{kl}}\,p_{jk}^{-1} & \eqref{C1} \\
      & p_{jk}\,\beta_{kl}\,p_{jk}^{-1}\,p_{jk}\,\alpha_{ij}\,p_{jk}^{-1} & \\
      & \Phi_{\sigma_q}\:\!\!(p_{jk}\,\beta_{jl}\,p_{jk}^{-1}\,\alpha_{ik})
  \end{longtable}
  
  When $q=k-1$ and $k \neq j+1$ we have the following.
  \begin{longtable}{R@{$\ =\ $}LC} \deductionTableWidth
    \Phi_{\sigma_q}\:\!\!(\alpha_{ik}\,p_{jk}\,\beta_{jl}\,p_{jk}^{-1})
      & p_{k-1,k}\,\alpha_{i,k-1}\,p_{j,k-1}\,\underline{p_{k-1,k}^{-1}\,\beta_{jl}}\,p_{k-1,k}\,p_{j,k-1}^{-1}\,p_{k-1,k}^{-1} & \eqref{C1} \\
      & p_{k-1,k}\,\underline{\alpha_{i,k-1}\,p_{j,k-1}\,\beta_{jl}\,p_{j,k-1}^{-1}}\,p_{k-1,k}^{-1}
                                                    & \eqref{C3} \\
      & p_{k-1,k}\,p_{j,k-1}\,\underline{\beta_{jl}}\,p_{j,k-1}^{-1}\,\alpha_{i,k-1}\,p_{k-1,k}^{-1} 
                                                    & \eqref{C1} \\
      & p_{k-1,k}\,p_{j,k-1}\,p_{k-1,k}^{-1}\,\beta_{jl}\,p_{k-1,k}\,p_{j,k-1}^{-1}\,\alpha_{i,k-1}\,p_{k-1,k}^{-1} & \\
      & \Phi_{\sigma_q}\:\!\!(p_{jk}\,\beta_{jl}\,p_{jk}^{-1}\,\alpha_{ik})
  \end{longtable}
  
  When $q=k$ and $l=k+1$ we have the following. 
  (Here the \eqref{C2}s hold because we are in either of the top two
  rows of Table~\ref{C2-table}, both of which contain $(\beta,p,p)$ for
  $\beta=p$, $x$, and $y$.)
  \begin{longtable}{R@{$\ =\ $}LC} \deductionTableWidth
    \Phi_{\sigma_q}\:\!\!(\alpha_{ik}\,p_{jk}\,\beta_{jl}\,p_{jk}^{-1})
      & \alpha_{il}\,\underline{p_{jl}\,p_{kl}\,\beta_{jk}\,p_{kl}^{-1}\,p_{jl}^{-1}}  
                                                    & \eqref{C2}\\
      & \underline{\alpha_{il}\,\beta_{jk}}       & \eqref{C1}\\
      & \underline{\beta_{jk}}\,\alpha_{il}       & \eqref{C2}\\
      & p_{jl}\,p_{kl}\,\beta_{jk}\,p_{kl}^{-1}\,p_{jl}^{-1}\,\alpha_{il} & \\  
      & \Phi_{\sigma_q}\:\!\!(p_{jk}\,\beta_{jl}\,p_{jk}^{-1}\,\alpha_{ik})
  \end{longtable}
  
  When $q=l-1$ and $l \neq k+1$ we have the following.
  \begin{longtable}{R@{$\ =\ $}LC} \deductionTableWidth
    \Phi_{\sigma_q}\:\!\!(\alpha_{ik}\,p_{jk}\,\beta_{jl}\,p_{jk}^{-1})
      & \underline{\alpha_{ik}\,p_{jk}\,p_{l,l-1}}\,\beta_{j,l-1}\,\underline{p_{l,l-1}^{-1}\,p_{jk}^{-1}}
                                                    & \eqref{C1} \eqref{C1} \eqref{C1} \\
      & p_{l,l-1}\,\underline{\alpha_{ik}\,p_{jk}\,\beta_{j,l-1}\,p_{jk}^{-1}}\,p_{l,l-1}^{-1} 
                                                    & \eqref{C3} \\
      & \underline{p_{l,l-1}\,p_{jk}}\,\beta_{j,l-1}\,\underline{p_{jk}^{-1}\,\alpha_{ik}\,p_{l,l-1}^{-1}}
                                                    & \eqref{C1} \eqref{C1} \eqref{C1} \\
      & p_{jk}\,p_{l,l-1}\,\beta_{j,l-1}\,p_{l,l-1}^{-1}\,p_{jk}^{-1}\,\alpha_{ik} & \\
      & \Phi_{\sigma_q}\:\!\!(p_{jk}\,\beta_{jl}\,p_{jk}^{-1}\,\alpha_{ik})
  \end{longtable}
  
  Finally, when $q=l$ and $i=l+1$ we have the following.
  (Here the \eqref{C2}s hold because they always hold for the triples 
  $(\alpha,p,p)$ and $(\beta,p,p)$.)
  \begin{longtable}{R@{$\ =\ $}LC} \deductionTableWidth
    \Phi_{\sigma_q}\:\!\!(\alpha_{ik}\,\,p_{jk}\,\beta_{jl}\,p_{jk}^{-1})
      & \underline{p_{il}\,\alpha_{kl}\,p_{il}^{-1}}\,\underline{p_{kj}\,\beta_{ij}\,p_{jk}^{-1}}
                                                    & \eqref{C2} \eqref{C2}\\
      & p_{ik}^{-1}\,\underline{\alpha_{kl}\,\beta_{ij}}\,p_{ik}   & \eqref{C1}\\
      & \underline{p_{ik}^{-1}\,\beta_{ij}\,p_{ik}}\,\underline{p_{ik}^{-1}\,\alpha_{kl}\,p_{ik}}
                                                    & \eqref{C2} \eqref{C2} \\
      & p_{jk}\,\beta_{ij}\,p_{jk}^{-1}\,p_{kl}\,\alpha_{kl}\,p_{kl}^{-1} & \\
      & \Phi_{\sigma_q}\:\!\!(p_{jk}\,\beta_{jl}\,p_{jk}^{-1}\,\,\alpha_{ik})
  \end{longtable}

  Now consider $\Phi_{\tau_q}$, there are two non-trivial cases.
  In the first case $\Phi_{\tau_q}\:\!\!(\alpha_{ik}) = \alpha_{ik}^{-1} p_{ik}$
  and we have the following.
  \begin{longtable}{R@{$\ =\ $}Lc} \deductionTableWidth
    \Phi_{\tau_q}\:\!\!(\alpha_{ik}\,\,p_{jk}\,\beta_{jl}\,p_{jk}^{-1})
      & \alpha_{ik}^{-1}\,\underline{p_{ik}\,p_{jk}\,\beta_{jl}\,p_{jk}^{-1}}
                                                    & \eqref{C3}\\
      & \underline{\alpha_{ik}^{-1}\,p_{jk}\,\beta_{jl}\,p_{jk}^{-1}}\,p_{ik}
                                                    & \eqref{C3}\\
      & p_{jk}\,\beta_{jl}\,p_{jk}^{-1}\,\alpha_{ik}^{-1}\,p_{ik}  \\
      & \Phi_{\tau_q}\:\!\!(p_{jk}\,\beta_{jl}\,p_{jk}^{-1}\,\,\alpha_{ik})
  \end{longtable}

  In the second case $\Phi_{\tau_q}\:\!\!(\beta_{jl}) = \beta_{jl}^{-1} p_{jl}$
  and we have the following.
  \begin{longtable}{R@{$\ =\ $}Lc} \deductionTableWidth
    \Phi_{\tau_q}\:\!\!(\alpha_{ik}\,\,p_{jk}\,\beta_{jl}\,p_{jk}^{-1})
      & \underline{\alpha_{ik}\,p_{jk}\,\beta_{jl}^{-1}}\,p_{jl}\,p_{jk}^{-1}
                                                    & \eqref{C3}\\
      & p_{jk}\,\beta_{jl}^{-1}\,\underline{p_{jk}^{-1}\,\alpha_{ik}\,p_{jk}\,p_{jl}\,p_{jk}^{-1}}
                                                    & \eqref{C3}\\
      & p_{jk}\,\beta_{jl}^{-1}\,p_{jl}\,p_{jk}^{-1}\,\alpha_{ik}  \\
      & \Phi_{\tau_q}\:\!\!(p_{jk}\,\beta_{jl}\,p_{jk}^{-1}\,\,\alpha_{ik})
  \end{longtable}

  %
  \relation{\eqref{M-x}}{x_{ij}\,p_{ij}\,t_i = p_{ij}\,t_i\,x_{ij}
                      \qquad i < j}
  First consider $\Phi_{\sigma_q}$.  The only non-trivial cases are when
  $q=i-1$, $q=i$ and $j=i+1$, and $q=j-1$ and $j \neq i+1$.
  
  When $q=i-1$ we have the following.
  \begin{longtable}{R@{$\ =\ $}Lc} \deductionTableWidth
    \Phi_{\sigma_q}\:\!\!(x_{ij}\,p_{ij}\,t_i)
      & p_{i-1,i}\,x_{i-1,j}\,p_{i-1,j}\,\underline{p_{i-1,i}^{-1}\,t_{i-1}}
                                                    & \eqref{C-pt}\\
      & p_{i-1,i}\,\underline{x_{i-1,j}\,p_{i-1,j}\,t_{i-1}}\,p_{i-1,i}^{-1}
                                                    & \eqref{M-x}\\
      & p_{i-1,i}\,p_{i-1,j}\,\underline{t_{i-1}}\,x_{i-1,j}\,p_{i-1,i}^{-1}
                                                    & \eqref{C-pt}\\
      & p_{i-1,i}\,p_{i-1,j}\,p_{i-1,i}^{-1}\,t_{i-1}\,p_{i-1,i}\,x_{i-1,j}\,p_{i-1,i}^{-1} \\
      & \Phi_{\sigma_q}\:\!\!(p_{ij}\,t_i\,x_{ij})
  \end{longtable}
  
  When $q=i$ and $j=i+1$ we have the following.
  \begin{longtable}{R@{$\ =\ $}Lc} \deductionTableWidth
    \Phi_{\sigma_q}\:\!\!(x_{ij}\,p_{ij}\,t_i)
      & t_j^{-1}\,y_{ij}\,\underline{t_j\,p_{ij}}\,t_j & \eqref{C-pt} \\
      & t_j^{-1}\,\underline{y_{ij}\,p_{ij}\,t_j}\,t_j & \eqref{M-y} \\
      & \underline{t_j^{-1}\,p_{ij}\,t_j}\,y_{ij}\,t_j & \eqref{C-pt} \\
      & p_{ij}\,y_{ij}\,t_j   \\
      & \Phi_{\sigma_q}\:\!\!(p_{ij}\,t_i\,x_{ij})
  \end{longtable}
  
  When $q=j-1$ and $j \neq i+1$ we have the following.
  \begin{longtable}{R@{$\ =\ $}Lc} \deductionTableWidth
    \Phi_{\sigma_q}\:\!\!(x_{ij}\,p_{ij}\,t_i)
      & p_{j-1,j}\,x_{i,j-1}\,p_{i,j-1}\,\underline{p_{j-1,j}^{-1}\,t_i}
                                                    & \eqref{C-pt} \\
      & p_{j-1,j}\,\underline{x_{i,j-1}\,p_{i,j-1}\,t_i}\,p_{j-1,j}^{-1}
                                                    & \eqref{M-x} \\
      & p_{j-1,j}\,p_{i,j-1}\,\underline{t_i}\,x_{i,j-1}\,p_{j-1,j}^{-1}
                                                    & \eqref{C-pt} \\
      & p_{j-1,j}\,p_{i,j-1}\,p_{j-1,j}^{-1}\,t_i\,p_{j-1,j}\,x_{i,j-1}\,p_{j-1,j}^{-1}
                                                    & \\
      & \Phi_{\sigma_q}\:\!\!(p_{ij}\,t_i\,x_{ij})
  \end{longtable}
  
  Now consider $\Phi_{\tau_q}$, the only non-trivial case is when $q=i$.
  \begin{longtable}{R@{$\ =\ $}Lc} \deductionTableWidth
    \Phi_{\tau_q}\:\!\!(x_{ij}\,p_{ij}\,t_i)
      & x_{ij}^{-1}\,p_{ij}\,\underline{p_{ij}\,t_i}   & \eqref{C-pt} \\
      & \underline{x_{ij}^{-1}\,p_{ij}\,t_i}\,p_{ij}   & \eqref{M-y} \\
      & p_{ij}\,t_i\,x_{ij}^{-1}\,p_{ij}               & \\
      & \Phi_{\tau_q}\:\!\!(p_{ij}\,t_i\,x_{ij})
  \end{longtable}

  %
  \relation{\eqref{M-y}}{y_{ij}\,p_{ij}\,t_j = p_{ij}\,t_j\,y_{ij}
                      \qquad i < j}
  First consider $\Phi_{\sigma_q}$.  The only non-trivial cases are when
  $q=i-1$, $q=i$ and $j=i+1$, and $q=j-1$ and $j \neq i+1$.
  
  When $q=i-1$ we have the following.
  \begin{longtable}{R@{$\ =\ $}Lc} \deductionTableWidth
    \Phi_{\sigma_q}\:\!\!(y_{ij}\,p_{ij}\,t_j)
      & p_{i-1,i}\,y_{i-1,j}\,p_{i-1,j}\,\underline{p_{i-1,i}^{-1}\,t_j}
                                                    & \eqref{C-pt}\\
      & p_{i-1,i}\,\underline{y_{i-1,j}\,p_{i-1,j}\,t_j}\,p_{i-1,i}^{-1}
                                                    & \eqref{M-y}\\
      & p_{i-1,i}\,p_{i-1,j}\,\underline{t_j}\,y_{i-1,j}\,p_{i-1,i}^{-1}
                                                    & \eqref{C-pt}\\
      & p_{i-1,i}\,p_{i-1,j}\,p_{i-1,i}^{-1}\,t_j\,p_{i-1,i}\,y_{i-1,j}\,p_{i-1,i}^{-1} & \\
      & \Phi_{\sigma_q}\:\!\!(p_{ij}\,t_j\,y_{ij})
  \end{longtable}
  
  When $q=i$ and $j=i+1$ we have the following.
  \begin{longtable}{R@{$\ =\ $}Lc} \deductionTableWidth
    \Phi_{\sigma_q}\:\!\!(y_{ij}\,p_{ij}\,t_j)
      & \underline{x_{ij}\,p_{ij}\,t_i} & \eqref{M-x} \\
      & p_{ij}\,t_i\,x_{ij} &  \\
      & \Phi_{\sigma_q}\:\!\!(p_{ij}\,t_j\,y_{ij})
  \end{longtable}
  
  When $q=j-1$ and $j \neq i+1$ we have the following.
  \begin{longtable}{R@{$\ =\ $}Lc} \deductionTableWidth
    \Phi_{\sigma_q}\:\!\!(y_{ij}\,p_{ij}\,t_j)
      & p_{j-1,j}\,y_{i,j-1}\,p_{i,j-1}\,\underline{p_{j-1,j}^{-1}\,t_{j-1}}
                                                    & \eqref{C-pt} \\
      & p_{j-1,j}\,\underline{y_{i,j-1}\,p_{i,j-1}\,t_{j-1}}\,p_{j-1,j}^{-1}
                                                    & \eqref{M-y} \\
      & p_{j-1,j}\,p_{i,j-1}\,\underline{t_{j-1}}\,y_{i,j-1}\,p_{j-1,j}^{-1}
                                                    & \eqref{C-pt} \\
      & p_{j-1,j}\,p_{i,j-1}\,p_{j-1,j}^{-1}\,t_{j-1}\,p_{j-1,j}\,y_{i,j-1}\,p_{j-1,j}^{-1}
                                                    &  \\
      & \Phi_{\sigma_q}\:\!\!(p_{ij}\,t_j\,y_{ij})
  \end{longtable}
  
  Now consider $\Phi_{\tau_q}$, the only non-trivial case is when $q=j$.
  \begin{longtable}{R@{$\ =\ $}Lc} \deductionTableWidth
    \Phi_{\tau_q}\:\!\!(y_{ij}\,p_{ij}\,t_j)
      & y_{ij}^{-1}\,p_{ij}\,\underline{p_{ij}\,t_j}   & \eqref{C-pt} \\
      & \underline{y_{ij}^{-1}\,p_{ij}\,t_j}\,p_{ij}   & \eqref{M-y} \\
      & p_{ij}\,t_j\,y_{ij}^{-1}\,p_{ij}               & \\
      & \Phi_{\sigma_q}\:\!\!(p_{ij}\,t_j\,y_{ij})
  \end{longtable}
\end{proof}

\bibliographystyle{plain}
\bibliography{pureHilden}

\end{document}